\title{Higgledy-piggledy sets in projective spaces of small dimension}
\author{Lins Denaux \\ {\it Ghent University}}
\date{}
\newtheorem{thm}{Theorem}[section]
\newtheorem*{mainres}{Main Results}
\newtheorem{prop}[thm]{Proposition}
\newtheorem{lm}[thm]{Lemma}
\newtheorem{crl}[thm]{Corollary}
\theoremstyle{definition}
\newtheorem{df}[thm]{Definition}
\newtheorem{nt}[thm]{Notation}
\newtheorem{rmk}[thm]{Remark}
\newtheorem{conf}[thm]{Configuration}
\newtheorem{prob}[thm]{Open Problem}
\newtheorem{snip}[thm]{Code Snippet}
\newcommand{\n}{N} 
\newcommand{\len}{n} 
\newcommand{\red}{r} 
\newcommand{\NN}{\mathbb{N}}
\newcommand{\FF}{\mathbb{F}}
\newcommand{\vspan}[1]{\left\langle#1\right\rangle}
\newcommand{\set}[1]{\left\{#1\right\}}
\newcommand{\sett}[2]{\left\{#1\,:\,#2\right\}}
\newcommand{\V}[2]{V\!\mspace{-2mu}\left(#1,#2\right)}
\newcommand{\pg}[2]{\textnormal{PG}\!\left(#1,#2\right)}
\newcommand{\pgTitle}[2]{\textnormal{\textbf{PG}}\!\left(#1,#2\right)}
\newcommand{\Strong}{\mathcal{B}}
\newcommand{\Sat}{\mathcal{S}}
\newcommand{\satbound}{s_q}
\newcommand{\satboundpow}[1]{s_{q^{#1}}}
\newcommand{\lenfunc}{\ell_q}
\newcommand{\lenfuncpow}[1]{\ell_{q^{#1}}}
\renewcommand{\geq}{\geqslant}
\renewcommand{\leq}{\leqslant}
\renewcommand{\rho}{\varrho}
\begin{document}

\maketitle

\begin{abstract}
    This work focuses on higgledy-piggledy sets of $k$-subspaces in $\pg{\n}{q}$, i.e.\ sets of projective subspaces that are `well-spread-out'.
    More precisely, the set of intersection points of these $k$-subspaces with any $(\n-k)$-subspace $\kappa$ of $\pg{\n}{q}$ spans $\kappa$ itself.
    
    We highlight three methods to construct small higgledy-piggledy sets of $k$-subspaces and discuss, for $k\in\set{1,\n-2}$, `optimal' sets that cover the smallest possible number of points.
    
    Furthermore, we investigate small non-trivial higgledy-piggledy sets in $\pg{\n}{q}$, $\n\leq5$.
    Our main result is the existence of six lines of $\pg{4}{q}$ in higgledy-piggledy arrangement, two of which intersect.
    Exploiting the construction methods mentioned above, we also show the existence of six planes of $\pg{4}{q}$ in higgledy-piggledy arrangement, two of which maximally intersect, as well as the existence of two higgledy-piggledy sets in $\pg{5}{q}$ consisting of eight planes and seven solids, respectively.
    
    Finally, we translate these geometrical results to a coding- and graph-theoretical context.
\end{abstract}

{\it Keywords:} Covering codes, Cutting blocking sets, Higgledy-piggledy sets, Minimal codes, Projective spaces, Resolving sets, Saturating sets, Strong blocking sets.

{\it Mathematics Subject Classification:} $05$B$25$, $94$B$05$, $51$E$20$, $51$E$21$.

\section{Introduction}

The main topic of this article concerns small sets of projective subspaces that are `well-spread-out' (copying a well-put description of \cite{FancsaliSziklai2,FancsaliSziklai3}).
The existence of infinite families of such combinatorial objects implies the existence of e.g.\ minimal codes and covering codes of relatively small length (see Section \ref{Sect_Fruits}).
Before going into detail concerning the significance of these structures and their applications, we first describe the setting in which we will work and introduce the necessary preliminaries.

Throughout this work, we generally assume that $\n\in\NN$ and that $q$ is a prime power.
The Galois field of order $q$ will be denoted by $\FF_q$ and the Desarguesian projective space of (projective) dimension $\n$ over $\FF_q$ will be denoted by $\pg{\n}{q}$.
We refer to \cite{Hirschfeld,HirschfeldThas} for an extensive overview on the topic of finite geometry.

\bigskip
One type of structure that is being thoroughly investigated in the literature are \emph{blocking sets}.
We adopt the definition used in \cite{DeBeuleStorme}.

\begin{df}\label{Def_BlockingSet}
    Let $k\in\set{0,1,\dots,\n-1}$.
    A $k$\emph{-blocking set} of $\pg{\n}{q}$ is a point set that meets every $(\n-k)$-dimensional subspace.
    A $1$-blocking set will simply be called a \emph{blocking set}.
\end{df}

A $k$-subspace is the easiest (and smallest) example of a $k$-blocking set of $\pg{\n}{q}$ \cite{BoseBurton}.
A natural generalisation of a $k$-blocking set of $\pg{\n}{q}$ is the concept of a $t$-fold $k$-blocking set, $t\in\NN$, which is a point set of $\pg{\n}{q}$ that meets every $(\n-k)$-dimensional subspace in at least $t$ points.
Obviously, any set of $t$ pairwise disjoint $k$-subspaces is a $t$-fold $k$-blocking set.

\begin{df}
    Let $k\in\set{0,1,\dots,\n-1}$.
    A \emph{strong $k$-blocking set} is a point set that meets every $(\n-k)$-dimensional subspace $\kappa$ in a set of points spanning $\kappa$.
    A strong $1$-blocking will simply be called a \emph{strong blocking set}.
\end{df}

The concept of a strong $k$-blocking set was introduced in \cite[Definition $3.1$]{DavydovEtAl}.
However, these are also known as \emph{generator sets} (\cite[Definition $2$]{FancsaliSziklai2}) and \emph{cutting blocking sets} (\cite[Definition $3.4$]{BoniniBorello}) in case $k=1$.

Note that any strong $k$-blocking set is necessarily an $(\n-k+1)$-fold $k$-blocking set, although the converse is generally false.
Following this line of thought, one could wonder if a strong $k$-blocking set could be constructed by considering all points lying in the union of a certain number of well-chosen $k$-subspaces. Although sporadic examples of such point sets were already presented in \cite{DavydovEtAl}, this idea was thoroughly investigated in \cite{FancsaliSziklai2,HegerPatkosTakats} for $k=1$ and later generalised in \cite{FancsaliSziklai3} to arbitrary $k$.

\begin{df}[Higgledy-piggledy set of $k$-subspaces]
    Let $k\in\set{0,1,\dots,\n-1}$ and suppose that $\mathcal{K}$ is a set of $k$-subspaces in $\pg{\n}{q}$.
    If the set of all points lying in at least one subspace of $\mathcal{K}$ is a strong $k$-blocking set, then the elements of $\mathcal{K}$ are said to be in \emph{higgledy-piggledy arrangement} and the set $\mathcal{K}$ itself is said to be a \emph{higgledy-piggledy set of $k$-subspaces}.
\end{df}

One often excludes the trivial cases $k\in\set{0,\n-1}$.
After all, any set of $\n+1$ points spanning the whole space $\pg{\n}{q}$ is a higgledy-piggledy point set of smallest size.
Conversely (or by duality, see Proposition \ref{Prop_Duality}), any set of $\n+1$ hyperplanes with the property that no point lies in all of them, is a higgledy-piggledy set of hyperplanes of smallest size.

\bigskip
If $1\leq k\leq\n-2$, however, it is generally not an easy task to find small higgledy-piggledy sets of $k$-subspaces.
The following `almost-equivalent' condition was first derived for sets of lines in \cite{FancsaliSziklai2} and later generalised to sets of $k$-subspaces in \cite{FancsaliSziklai3}, and will prove to be a great tool to ease the search for higgledy-piggledy sets.

\begin{thm}[{\cite[Theorem $4$ and Proposition $5$]{FancsaliSziklai3}}]\label{Res_CharHigPig}
    Let $k\in\set{0,1,\dots,\n-1}$ and suppose that $\mathcal{K}$ is a set of $k$-subspaces in $\pg{\n}{q}$.
    If no $(\n-k-1)$-subspace meets each element of $\mathcal{K}$, then $\mathcal{K}$ is a higgledy-piggledy set of $k$-subspaces.
    If $|\mathcal{K}|\leq q$, the converse holds as well.
\end{thm}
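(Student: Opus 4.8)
The plan is to prove the two implications separately, exploiting a duality/linear-algebra reformulation of the strong blocking condition.

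\textbf{Setting up the first implication.} Suppose no $(\n-k-1)$-subspace meets every element of $\mathcal{K}$, and let $\kappa$ be an arbitrary $(\n-k)$-subspace; write $U := \bigcup_{\mathcal{L}\in\mathcal{K}} (\mathcal{L}\cap\kappa)$. I must show $\vspan{U} = \kappa$. Suppose not: then $\vspan{U}$ lies inside some hyperplane $\pi$ of $\kappa$, i.e.\ an $(\n-k-1)$-subspace. The key observation is that for each $\mathcal{L}\in\mathcal{K}$, a $k$-subspace and the $(\n-k)$-subspace $\kappa$ in $\pg{\n}{q}$ satisfy $\dim(\mathcal{L}\cap\kappa)\geq k+(\n-k)-\n = 0$, so $\mathcal{L}\cap\kappa$ is always nonempty; hence $\mathcal{L}\cap\kappa \subseteq \pi$, and in particular $\mathcal{L}\cap\pi\neq\emptyset$ for every $\mathcal{L}\in\mathcal{K}$. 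Thus $\pi$ is an $(\n-k-1)$-subspace meeting each element of $\mathcal{K}$, contradicting the hypothesis. This direction is essentially a dimension count and is the easy half.

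\textbf{Setting up the converse.} Now assume $|\mathcal{K}|\leq q$ and that $\mathcal{K}$ is higgledy-piggledy; I want to produce, assuming some $(\n-k-1)$-subspace $\mu$ meets every element of $\mathcal{K}$, a contradiction by exhibiting an $(\n-k)$-subspace $\kappa$ through $\mu$ such that $\bigcup_{\mathcal{L}\in\mathcal{K}}(\mathcal{L}\cap\kappa)$ fails to span $\kappa$ — in fact I will arrange it to be contained in $\mu$. There are $q+1$ hyperplanes-of-$\kappa$-type $(\n-k)$-subspaces containing the fixed $(\n-k-1)$-subspace $\mu$; equivalently, the $(\n-k)$-spaces through $\mu$ form a pencil indexed by the points of a projective line. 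For each $\mathcal{L}\in\mathcal{K}$, pick a point $P_\mathcal{L}\in\mathcal{L}\cap\mu$. If some $\mathcal{L}$ meets $\mu$ but also meets $\kappa\setminus\mu$ for a given $\kappa\supseteq\mu$, that contributes a point outside $\mu$; the plan is to count, for each $\mathcal{L}$, how many of the $q+1$ spaces $\kappa$ in the pencil have $\mathcal{L}\cap\kappa\not\subseteq\mu$. The crucial claim is that this number is at most $1$ for each $\mathcal{L}$: if $\mathcal{L}\cap\kappa$ contained a point off $\mu$ for two distinct $\kappa_1,\kappa_2$ in the pencil, then since $\kappa_1\cap\kappa_2=\mu$, and $\vspan{\mathcal{L}\cap\kappa_1,\mathcal{L}\cap\kappa_2}\subseteq\mathcal{L}$... one needs to show $\mathcal{L}$ itself would then have to meet $\mu$ in too large a subspace or $\mathcal{L}$ would be forced to lie in $\vspan{\mu,\text{something}}$, contradicting $\dim\mathcal{L}=k$. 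I would make this precise via the dimension formula: $\dim(\mathcal{L}\cap\mu)\leq k-1$ is impossible to improve in general, so the bound must come from the fact that points of $\mathcal{L}$ off $\mu$ lie in a fixed $\kappa$ once we know one of them does, because $\vspan{\mathcal{L}\cap\mu, Q}$ for $Q\notin\mu$ has dimension $\dim(\mathcal{L}\cap\mu)+1$, and this must be contained in $\kappa$.

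\textbf{The counting finish and the main obstacle.} Granting the claim, the total number of "bad" spaces $\kappa$ in the pencil — those for which some $\mathcal{L}\in\mathcal{K}$ contributes a point outside $\mu$ — is at most $|\mathcal{K}|\leq q < q+1$. Hence at least one $\kappa$ in the pencil has $\mathcal{L}\cap\kappa\subseteq\mu$ for every $\mathcal{L}\in\mathcal{K}$, so $\bigcup_{\mathcal{L}\in\mathcal{K}}(\mathcal{L}\cap\kappa)\subseteq\mu\subsetneq\kappa$ and cannot span $\kappa$, contradicting the higgledy-piggledy hypothesis. This is exactly where the bound $|\mathcal{K}|\leq q$ is used and where it is sharp. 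I expect the main obstacle to be making the "at most one bad $\kappa$ per $\mathcal{L}$" claim airtight: one has to handle carefully the degenerate possibilities $\mathcal{L}\subseteq\mu$ (then $\mathcal{L}$ is never bad) and the interplay between $\dim(\mathcal{L}\cap\mu)$ and $\dim(\mathcal{L}\cap\kappa)$, and argue that the "new" points of $\mathcal{L}$ inside $\kappa$ but outside $\mu$ form a subspace of $\mathcal{L}$ complementary-ish to $\mathcal{L}\cap\mu$ whose span with $\mathcal{L}\cap\mu$ is all of $\mathcal{L}\cap\kappa$, and that this pins down $\kappa$ uniquely once it is nonempty off $\mu$. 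Since this is cited verbatim from \cite{FancsaliSziklai3}, I would also simply defer to that reference for the details, but the sketch above is the natural route.
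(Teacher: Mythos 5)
The paper does not prove this theorem itself --- it imports it from \cite{FancsaliSziklai3} --- so your proposal is measured on its own merits. Your first implication is correct and complete: Grassmann gives $\mathcal{L}\cap\kappa\neq\emptyset$ for every $\mathcal{L}\in\mathcal{K}$, so a failure to span puts all these intersections inside an $(\n-k-1)$-subspace meeting every element of $\mathcal{K}$.

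The converse, however, contains a genuine error for $k\geq2$. First, the $(\n-k)$-subspaces through a fixed $(\n-k-1)$-subspace $\mu$ do not form a pencil of size $q+1$: they correspond to the points of the quotient $\pg{\n}{q}/\mu\cong\pg{k}{q}$, so there are $\frac{q^{k+1}-1}{q-1}$ of them. More seriously, your ``crucial claim'' that each $\mathcal{L}$ is bad for at most one $\kappa$ is false. If $d:=\dim(\mathcal{L}\cap\mu)$, then $\vspan{\mathcal{L},\mu}$ has dimension $\n-1-d$, and \emph{every} $(\n-k)$-subspace $\kappa$ with $\mu\subseteq\kappa\subseteq\vspan{\mathcal{L},\mu}$ satisfies $\dim(\mathcal{L}\cap\kappa)\geq d+1$, hence is bad; in the quotient these $\kappa$ form a subspace of dimension $k-d-1$, so there can be up to $\frac{q^k-1}{q-1}$ bad $\kappa$ per $\mathcal{L}$ (e.g.\ for $k=2$, $\n=4$, a plane $\mathcal{L}$ meeting the line $\mu$ in one point is bad for $q+1$ of the $q^2+q+1$ planes through $\mu$). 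Your justification --- that a single point $Q\in\mathcal{L}\setminus\mu$ pins down $\kappa=\vspan{\mu,Q}$ --- only shows each such $Q$ lies in one $\kappa$, not that all points of $\mathcal{L}$ off $\mu$ land in the same one. The argument is salvageable: with the corrected counts the number of bad subspaces is at most $|\mathcal{K}|\cdot\frac{q^k-1}{q-1}\leq\frac{q^{k+1}-q}{q-1}<\frac{q^{k+1}-1}{q-1}$, so a good $\kappa$ still exists and the contradiction goes through. But as written the key lemma of your converse is true only for $k=1$, while the theorem is asserted for all $k$.
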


As one generally wishes to construct higgledy-piggledy sets of small size, \textbf{lower bounds} on the size of such sets were determined to reveal which sizes would (theoretically) be optimal.
A lower bound on higgledy-piggledy line sets was determined in \cite{FancsaliSziklai2} for $q$ large enough, and very recently strengthened in \cite{HegerNagy} to all values of $q$.

\begin{thm}[{\cite[Theorem $3.12$]{HegerNagy}}]
    A higgledy-piggledy line set of $\pg{\n}{q}$ contains at least $\n+\left\lfloor\frac{\n}{2}\right\rfloor-\left\lfloor\frac{\n-1}{q}\right\rfloor$ elements.
\end{thm}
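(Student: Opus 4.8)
My plan is to prove the contrapositive: a line set $\mathcal{L}$ of $\pg{\n}{q}$ with $|\mathcal{L}|<\n+\lfloor\n/2\rfloor-\lfloor(\n-1)/q\rfloor$ is never in higgledy-piggledy arrangement. The tool I would lean on is Theorem~\ref{Res_CharHigPig}: when $|\mathcal{L}|\le q$, it suffices to produce a single $(\n-2)$-subspace $\mu$ meeting every line of $\mathcal{L}$. So the first and central task is the following purely incidence-geometric claim. \emph{Key claim.} Any $s$ lines of $\pg{\n}{q}$ possess a common \emph{transversal subspace} --- a subspace meeting each of them --- of projective dimension at most about $\tfrac{2}{3}s$; in particular one contained in some $(\n-2)$-subspace whenever $s<\n+\lfloor\n/2\rfloor$. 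Combined with Theorem~\ref{Res_CharHigPig} this would settle the whole range $|\mathcal{L}|\le q$.

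For the remaining range $q<|\mathcal{L}|<\n+\lfloor\n/2\rfloor$ --- which can only occur when $q$ is small relative to $\n$, and is precisely what the term $-\lfloor(\n-1)/q\rfloor$ corrects --- the converse of Theorem~\ref{Res_CharHigPig} is unavailable, so I would argue directly with the point set $B=\bigcup_{\ell\in\mathcal{L}}\ell$. It is a strong blocking set exactly when, for every $(\n-2)$-subspace $\mu$, the projection of $B$ from $\mu$ onto a complementary line is surjective; here a line of $\mathcal{L}$ skew to $\mu$ already projects onto the whole line, one meeting $\mu$ in a point projects to a single point, and one contained in $\mu$ contributes nothing. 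Hence, if the transversal $\mu$ supplied by the Key claim meets every line and contains $c$ of them, then $B$ can be a strong blocking set only if $|\mathcal{L}|-c\ge q+1$; peeling off the $c$ lines inside $\mu$ and iterating this estimate within $\mu\cong\pg{\n-2}{q}$ is how I expect the $\lfloor(\n-1)/q\rfloor$ correction to appear.

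To prove the Key claim I would run an induction absorbing the lines a few at a time, controlling the dimension of the transversal-so-far by an amortised count: the point is that three further lines can typically be caught while the ambient dimension grows by only two, which is the source of the ratio $\tfrac32$. Given a subspace $\sigma$ already meeting $\ell_1,\dots,\ell_j$, one adjoins a point of $\ell_{j+1}$ and a point of $\ell_{j+2}$ (dimension up by at most two), and then, passing to the quotient by $\sigma$, observes that as these two points vary the constructed subspace sweeps out enough of the quotient space that some choice simultaneously catches $\ell_{j+3}$; the quotient statement reduces to elementary facts on transversal lines of two lines in a small projective space (for instance, the transversals of two skew lines of $\pg{3}{q}$ cover all of $\pg{3}{q}$). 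Alongside this one uses the trivial complementary observation that if $\langle\mathcal{L}\rangle$ itself has dimension at most $\n-2$ it is already the desired transversal, and otherwise $|\mathcal{L}|$ is bounded below via $2|\mathcal{L}|-1\ge\dim\langle\mathcal{L}\rangle$.

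The delicate part --- and where I expect the real work to lie --- is making the constant in the Key claim tight enough to yield exactly $\n+\lfloor\n/2\rfloor$ rather than a weaker linear bound: the amortised step runs smoothly only once the span accumulated so far is large enough for the relevant Schubert-type incidences to be non-empty, so one must either process the lines in a favourable order (clustered lines first, so their mutual joins stay small) or interleave the argument with the span bound above, and then reconcile the rounding for the one or two leftover lines outside the triples. Pinning down the small-$q$ behaviour --- where $|\mathcal{L}|$ may genuinely exceed $q$ and the projection-and-peeling argument must be carried out with care --- is the second place where care is required; everything else is routine manipulation of dimensions of joins and intersections.
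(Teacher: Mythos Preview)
This theorem is not proved in the paper at all: it is simply quoted, with attribution, as \cite[Theorem~3.12]{HegerNagy}, and no argument is given. There is therefore no ``paper's own proof'' against which to compare your attempt.

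As to the attempt itself: your overall architecture --- reducing via Theorem~\ref{Res_CharHigPig} to the existence of a transversal $(\n-2)$-space when $|\mathcal{L}|\le q$, building that transversal by an amortised ``three lines caught per two dimensions added'' induction, and handling the regime $|\mathcal{L}|>q$ by a direct projection-and-peeling argument on the underlying point set --- is broadly in the spirit of the arguments in \cite{FancsaliSziklai2} and \cite{HegerNagy}. However, what you have written is a strategy, not a proof. You yourself identify the two genuine difficulties and leave them open: (i) getting the amortised bound to land on exactly $\n+\lfloor\n/2\rfloor$ rather than a weaker linear bound requires care with the first few steps and with the one or two lines left over after grouping in triples, and your sketch does not carry this out; (ii) the small-$q$ correction $\lfloor(\n-1)/q\rfloor$ is only gestured at via an iteration whose bookkeeping is not performed. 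Until those two pieces are written down, the proposal is a plausible outline but not a proof.
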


Based on the reasoning behind \cite[Theorem $14$]{FancsaliSziklai2}, the authors of \cite{FancsaliSziklai3} inductively determined a lower bound on the size of general higgledy-piggledy sets of $k$-subspaces.

\begin{thm}[{\cite[Theorem $20$]{FancsaliSziklai3}}]\label{Res_LowerBound}
    Let $k\in\set{0,1,\dots,\n-1}$.
    A higgledy-piggledy set of $k$-subspaces in $\pg{\n}{q}$ contains at least $\min\set{q,\sum_{i=0}^k\left\lfloor\frac{\n-k+i}{i+1}\right\rfloor}+1$ elements.
\end{thm}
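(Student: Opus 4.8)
The plan is to derive the bound from Theorem~\ref{Res_CharHigPig} together with a purely combinatorial transversal statement, which I would prove by a greedy construction in the dual space. Write $f(\n,k):=\sum_{i=0}^{k}\left\lfloor\frac{\n-k+i}{i+1}\right\rfloor$; the substitution $j=i+1$ rewrites this as $\sum_{j=1}^{k+1}\left\lfloor\frac{\n-k-1+j}{j}\right\rfloor$, and the substitution $i\mapsto k+2-i$ rewrites it as $\sum_{i=1}^{k+1}\left\lfloor\frac{\n-i+1}{k+2-i}\right\rfloor$; both identities will be used. Suppose now that $\mathcal{K}$ is a higgledy-piggledy set of $k$-subspaces of $\pg{\n}{q}$ with $|\mathcal{K}|=m\leq\min\set{q,f(\n,k)}$, and aim for a contradiction. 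Since $m\leq q$, the converse part of Theorem~\ref{Res_CharHigPig} shows that no $(\n-k-1)$-subspace meets every element of $\mathcal{K}$. Hence it suffices to prove the following claim: \emph{every family of at most $f(\n,k)$ $k$-subspaces of $\pg{\n}{q}$ admits a transversal $(\n-k-1)$-subspace}, i.e.\ an $(\n-k-1)$-subspace meeting each of its members. (The parameter $q$ never enters this claim; it appears in the statement only because the converse of Theorem~\ref{Res_CharHigPig} needs $|\mathcal{K}|\leq q$.)

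I would establish the claim in the dual space $\pg{\n}{q}^{*}$. Under the standard duality, for a $k$-subspace $\kappa$ and an $(\n-k-1)$-subspace $\pi$ of $\pg{\n}{q}$ one has $\pi\cap\kappa\neq\emptyset$ exactly when the dual subspaces $\hat\pi$ (of dimension $k$) and $\hat\kappa$ (of dimension $\n-k-1$) do not span $\pg{\n}{q}^{*}$; and since $\dim\hat\pi+\dim\hat\kappa=k+(\n-k-1)=\n-1$, this is in turn equivalent to $\hat\pi\cap\hat\kappa\neq\emptyset$. So the claim amounts to: given $(\n-k-1)$-subspaces $\hat\kappa_1,\dots,\hat\kappa_m$ of $\pg{\n}{q}^{*}$ with $m\leq f(\n,k)$, there is a $k$-subspace meeting all of them.

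For this I would build the $k$-subspace as $\vspan{\hat H_1,\dots,\hat H_{k+1}}$, choosing the points $\hat H_1,\dots,\hat H_{k+1}$ one by one and writing $L_i:=\vspan{\hat H_1,\dots,\hat H_i}$ (so $L_0=\emptyset$ and, by construction, $\dim L_i=i-1$). The crucial observation is that if at step $i$ one picks $\hat H_i\in\vspan{L_{i-1},\hat\kappa_j}$, then $\vspan{L_i,\hat\kappa_j}=\vspan{L_{i-1},\hat\kappa_j}$ has dimension at most $(i-2)+(\n-k-1)+1=\n-k+i-2$, and since each of the remaining $k+1-i$ points raises the dimension by at most one, $\vspan{L_{k+1},\hat\kappa_j}$ has dimension at most $\n-1$; hence $\vspan{\hat H_1,\dots,\hat H_{k+1}}$ and $\hat\kappa_j$ do not span $\pg{\n}{q}^{*}$, so they meet. (Similarly, should $L_{i-1}\cap\hat\kappa_j\neq\emptyset$ at some point, then $\hat\pi\supseteq L_{i-1}$ already meets $\hat\kappa_j$.) Accordingly I would fix a partition $\set{1,\dots,m}=S_1\cup\dots\cup S_{k+1}$ with $|S_i|\leq\left\lfloor\frac{\n-i+1}{k+2-i}\right\rfloor$ for every $i$ — possible precisely because these floors sum to $f(\n,k)\geq m$ — and at step $i$ choose $\hat H_i$ inside $\bigcap_{j\in S_i}\vspan{L_{i-1},\hat\kappa_j}$ but outside $L_{i-1}$. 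Such a point exists: discarding the indices $j\in S_i$ already taken care of, the remaining ones satisfy $L_{i-1}\cap\hat\kappa_j=\emptyset$, so each $\vspan{L_{i-1},\hat\kappa_j}$ has codimension exactly $k+2-i$ in $\pg{\n}{q}^{*}$, whence the intersection has codimension at most $|S_i|(k+2-i)\leq\n-i+1$, hence dimension at least $i-1$; as it plainly contains $L_{i-1}$, of dimension $i-2$, it contains a point outside $L_{i-1}$. Running $i$ from $1$ to $k+1$ takes care of every index and yields the desired $k$-subspace.

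The geometric content of each step is just the elementary codimension estimate above, which is valid over every $\FF_q$; the real effort lies in the bookkeeping. One must check the two rewritings of $f(\n,k)$, and — the genuinely delicate point — verify that ``taking care of'' an index is irreversible: once $\hat H_i\in\vspan{L_{i-1},\hat\kappa_j}$, the conclusion $\vspan{\hat H_1,\dots,\hat H_{k+1}}\cap\hat\kappa_j\neq\emptyset$ cannot be undone by later choices, and any index that gets taken care of prematurely is simply dropped from its block $S_i$. Granting this, processing $S_1,\dots,S_{k+1}$ one after another settles all $m$ indices — contradicting that $\mathcal{K}$ is higgledy-piggledy, and thereby proving the theorem.
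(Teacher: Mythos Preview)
The paper does not supply its own proof of this statement; it is quoted verbatim from \cite[Theorem~$20$]{FancsaliSziklai3}, and the surrounding text only remarks that the original argument proceeds inductively, generalising the reasoning of \cite[Theorem~$14$]{FancsaliSziklai2}. There is therefore nothing in the paper itself to compare your argument against line by line.

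That said, your proposal is correct and self-contained. You reduce the statement, via the converse direction of Theorem~\ref{Res_CharHigPig}, to the purely combinatorial claim that any $m\leq f(\n,k)$ many $k$-subspaces admit a transversal $(\n-k-1)$-subspace, and then establish this by a direct greedy construction: after dualising, you build the transversal $k$-subspace one point at a time over $k+1$ rounds, assigning to round $i$ a block $S_i$ of at most $\left\lfloor\frac{\n-i+1}{k+2-i}\right\rfloor$ indices, and use the elementary codimension estimate to guarantee a valid choice at each round. The bookkeeping all checks out, including the two rewritings of $f(\n,k)$ and the irreversibility of ``taking care of'' an index. One remark: the passage to the dual is not merely cosmetic --- running the identical greedy argument in the primal (building the $(\n-k-1)$-transversal over $\n-k$ rounds) would instead produce the bound $\sum_{j=1}^{\n-k}\left\lfloor\frac{k+j}{j}\right\rfloor$, which is precisely the second term appearing in Theorem~\ref{Thm_LowerBound}; so your choice of side is what recovers the specific expression stated here.
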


The latter theorem can in fact be improved for $k>\frac{\n-1}{2}$ if one takes the duality of the projective space into account.

\begin{thm}\label{Thm_LowerBound}
    Let $k\in\set{0,1,\dots,\n-1}$.
    A higgledy-piggledy set of $k$-subspaces in $\pg{\n}{q}$ contains at least
    \[
        \min\set{q,\max\set{(k+1)+\sum_{i=1}^{k+1}\left\lfloor\frac{\n-k-1}{i}\right\rfloor,(\n-k)+\sum_{i=1}^{\n-k}\left\lfloor\frac{k}{i}\right\rfloor}}+1
    \]
    elements.
\end{thm}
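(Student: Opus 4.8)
\textit{Proof proposal.}
The plan is to derive the stated bound as the stronger of two estimates: the one given directly by Theorem \ref{Res_LowerBound}, and the one obtained by first passing to the dual projective space. First I would observe that the first expression appearing inside the inner maximum is merely a rewriting of the bound of Theorem \ref{Res_LowerBound}. Substituting $i\mapsto i-1$ in the sum and using $\left\lfloor\frac{\n-k-1}{i}+1\right\rfloor=1+\left\lfloor\frac{\n-k-1}{i}\right\rfloor$ gives
\[
    \sum_{i=0}^{k}\left\lfloor\frac{\n-k+i}{i+1}\right\rfloor=(k+1)+\sum_{i=1}^{k+1}\left\lfloor\frac{\n-k-1}{i}\right\rfloor=:A.
\]
Hence Theorem \ref{Res_LowerBound} already tells us that any higgledy-piggledy set $\mathcal{K}$ of $k$-subspaces in $\pg{\n}{q}$ satisfies $|\mathcal{K}|\geq\min\set{q,A}+1$.

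Next I would invoke the duality principle of $\pg{\n}{q}$ (Proposition \ref{Prop_Duality}): a duality maps $\mathcal{K}$ to a set $\mathcal{K}^{*}$ of $(\n-k-1)$-subspaces with $|\mathcal{K}^{*}|=|\mathcal{K}|$, and $\mathcal{K}^{*}$ is again in higgledy-piggledy arrangement. Applying the rewritten form of Theorem \ref{Res_LowerBound} to $\mathcal{K}^{*}$, i.e.\ with the parameter $\n-k-1$ in the role of $k$, yields $|\mathcal{K}|=|\mathcal{K}^{*}|\geq\min\set{q,B}+1$, where a short computation (note $(\n-k-1)+1=\n-k$ and $\n-(\n-k-1)-1=k$) identifies $B=(\n-k)+\sum_{i=1}^{\n-k}\left\lfloor\frac{k}{i}\right\rfloor$ as precisely the second expression inside the inner maximum.

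Finally I would combine the two inequalities $|\mathcal{K}|\geq\min\set{q,A}+1$ and $|\mathcal{K}|\geq\min\set{q,B}+1$ into $|\mathcal{K}|\geq\max\set{\min\set{q,A},\min\set{q,B}}+1$ and then use the elementary identity $\max\set{\min\set{q,A},\min\set{q,B}}=\min\set{q,\max\set{A,B}}$, valid because $\min$ distributes over $\max$, to obtain exactly the claimed bound. I do not expect a genuine obstacle: the whole statement amounts to the observation that Theorem \ref{Res_LowerBound} may be applied both to a higgledy-piggledy set and to its dual, and the only points needing care — the index shift in the first step and the fact that dualising preserves both the higgledy-piggledy property and the cardinality — are routine, the latter being the content of Proposition \ref{Prop_Duality}.
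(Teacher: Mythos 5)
Your proposal is correct and follows exactly the paper's (very terse) proof: apply Theorem \ref{Res_LowerBound} both to the set and to its dual via Proposition \ref{Prop_Duality}, then rewrite the formula; your index-shift computation and the identity $\max\set{\min\set{q,A},\min\set{q,B}}=\min\set{q,\max\set{A,B}}$ are both correct. The only point worth making explicit is that Proposition \ref{Prop_Duality} carries the hypothesis $|\mathcal{K}|\leq q$, but in the complementary case $|\mathcal{K}|>q$ the claimed bound is at most $q+1\leq|\mathcal{K}|$ and holds trivially, so nothing is lost.
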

\begin{proof}
    This follows immediately from Theorem \ref{Res_LowerBound} and Proposition \ref{Prop_Duality}.
    We have rewritten the formula to emphasize its duality.
\end{proof}

The main topic of this article concerns the flip side of the coin, namely the search for tighter \textbf{upper bounds} on the size of the smallest possible higgledy-piggledy sets of $k$-subspaces in $\pg{\n}{q}$.
This is naturally done by constructing small higgledy-piggledy sets, ideally with a size as close as possible to the theoretical lower bound.

A naive but interesting example of a general higgledy-piggledy line set is the \emph{tetrahedron}, first mentioned in \cite[Theorem $6$]{DavydovOstergard}.
This is a set of lines obtained by simply pairwise connecting $\n+1$ points of $\pg{\n}{q}$ that span the whole space, resulting in a set of $\frac{\n(\n+1)}{2}$ lines in higgledy-piggledy arrangement.
Several years later, smaller higgledy-piggledy line sets were found and subsequently generalised.

\begin{thm}[{\cite[Theorem $24$]{FancsaliSziklai2}}]\label{Res_HigPigExistence}
    If $q\geq2\n-1$, then there exist $2\n-1$ pairwise disjoint lines of $\pg{\n}{q}$ in higgledy-piggledy arrangement.
\end{thm}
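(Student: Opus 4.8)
The plan is to reduce the claim, via Theorem~\ref{Res_CharHigPig} applied with $k=1$, to the incidence statement that there exist $2\n-1$ pairwise disjoint lines of $\pg{\n}{q}$ no $(\n-2)$-subspace of which meets all of them: by the sufficient half of that theorem, such a line set is automatically higgledy-piggledy. I would realise the lines as tangent lines of a normal rational curve.

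Assume $\n\geq3$, let $\nu\colon t\mapsto(1,t,t^{2},\dots,t^{\n})$ together with $(0,\dots,0,1)$ parametrise a normal rational curve $\mathcal{C}\subseteq\pg{\n}{q}$, put $\nu'(t)=(0,1,2t,\dots,\n t^{\n-1})$, and let $\ell_{t}=\vspan{\nu(t),\nu'(t)}$ be the tangent line of $\mathcal{C}$ at $\nu(t)$. Using $q\geq2\n-1$, pick distinct $a_{1},\dots,a_{2\n-1}\in\FF_{q}$ and set $\mathcal{K}=\set{\ell_{a_{1}},\dots,\ell_{a_{2\n-1}}}$. For $a\neq b$ the $4\times(\n+1)$ matrix with rows $\nu(a),\nu'(a),\nu(b),\nu'(b)$ has a $4\times4$ confluent-Vandermonde minor equal to $(a-b)^{4}\neq0$, so (as $\n\geq3$) these four vectors are linearly independent and hence $\ell_{a}\cap\ell_{b}=\emptyset$: the lines of $\mathcal{K}$ are pairwise disjoint.

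For the transversal condition, suppose some $(\n-2)$-subspace $\pi=H\cap H'$ (with $H\neq H'$ hyperplanes) met every $\ell_{a_{i}}$. Let $f,g\in\FF_{q}[t]$ be the nonzero, linearly independent polynomials of degree at most $\n$ whose coefficient vectors define $H$ and $H'$; then $\nu(t)\in H\iff f(t)=0$ and $\nu'(t)\in H\iff f'(t)=0$, and similarly for $H'$. A point $\lambda\nu(a)+\mu\nu'(a)$ of $\ell_{a}$ lies in $\pi$ exactly when the system $\lambda f(a)+\mu f'(a)=0=\lambda g(a)+\mu g'(a)$ has a nonzero solution $(\lambda,\mu)$, i.e.\ when the Wronskian $W:=fg'-f'g$ vanishes at $a$. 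Thus $W$ has the $2\n-1$ distinct roots $a_{1},\dots,a_{2\n-1}$; but the coefficient of $t^{2\n-1}$ equals $\n f_{\n}g_{\n}$ in both $fg'$ and $f'g$, so $\deg W\leq2\n-2$ and hence $W\equiv0$. When $\operatorname{char}\FF_{q}>\n$, a vanishing Wronskian of two polynomials of degree at most $\n$ forces them to be proportional (check that $(g/f)'=0$, and since the degrees lie below the characteristic, $g/f$ must be constant), contradicting $H\neq H'$; so $\mathcal{K}$ is higgledy-piggledy. Since a prime $q\geq2\n-1$ has characteristic $q>\n$, this already settles every prime $q$, and indeed every $q$ with $\operatorname{char}\FF_{q}>\n$.

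The main obstacle is the residual range $p:=\operatorname{char}\FF_{q}\leq\n$. There the implication ``$W\equiv0\Rightarrow f\parallel g$'' fails ($f=1$, $g=t^{p}$), and in fact the tangent-line set above is not higgledy-piggledy: every $\nu'(t)$ vanishes in all coordinates whose index is $0$ or a multiple of $p$, so all the lines $\ell_{t}$ lie in a common subspace of projective dimension $\n-\lfloor\n/p\rfloor-1\leq\n-2$, a transversal (and $2\n-1\leq q$, so Theorem~\ref{Res_CharHigPig} does rule $\mathcal{K}$ out). To cover these $q$ I would replace $\mathcal{C}$ by another degree-$\n$ rational curve, chosen so that the analogue of $\set{\nu'(t)}$ spans enough directions and the associated ``Wronskian'' still has degree at most $2\n-2$, so that $2\n-1$ prescribed roots again force it to vanish; a separability/Galois argument would then replace the classical Wronskian fact. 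Alternatively, since the number of $(\n-2)$-subspaces and the number of them through a fixed line are polynomial in $q$, one can build pairwise disjoint lines with no common transversal greedily and then optimise the count down to exactly $2\n-1$. Carrying either route through uniformly in $q$ is where I expect the real difficulty to lie.
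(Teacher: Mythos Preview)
The paper gives no proof of this statement; it is quoted from \cite[Theorem~24]{FancsaliSziklai2}. Your approach---taking $2\n-1$ tangent lines of a normal rational curve and showing via a Wronskian argument that no $(\n-2)$-space can meet them all---is precisely the construction and method used there, and your treatment of the case $\operatorname{char}\FF_q>\n$ is correct and complete (including the confluent-Vandermonde disjointness check and the degree bound $\deg W\leq2\n-2$).

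You are also right that this construction collapses when $p:=\operatorname{char}\FF_q\leq\n$, though your phrasing contains a slip: it is not the \emph{lines} $\ell_t$ that lie in the subspace $\set{x_0=x_p=x_{2p}=\cdots=0}$ (they do not, since $\nu(t)$ has entry $t^{jp}\neq0$ in coordinate $jp$), but rather the \emph{points} $\nu'(t)$; that subspace, of projective dimension $\n-\lfloor\n/p\rfloor-1\leq\n-2$, is then a common transversal to all the $\ell_t$, which is the conclusion you want. This is a genuine obstruction, and in fact the original statement in \cite{FancsaliSziklai2} carries exactly the extra hypothesis that the characteristic be $0$ or exceed $\n$; the present paper's restatement has silently dropped it. So the ``residual range $p\leq\n$'' you flag is not a defect of your argument relative to the cited source---your two suggested workarounds (a modified curve with a Hasse-derivative-style Wronskian, or a greedy/counting construction) are reasonable research directions, but no characteristic-free proof of the bound $2\n-1$ along these lines appears in \cite{FancsaliSziklai2}, and you should not expect to manufacture one as a routine patch.
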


\begin{thm}{\cite[Proposition $10$ and Subsection $3.4$]{FancsaliSziklai3}}\label{Res_HigPigExistenceGeneral}
    Let $k\in\set{0,1,\dots,\n-1}$.
    If $q>\n+1$, then there exist $(\n-k)(k+1)+1$ distinct $k$-subspaces of $\pg{\n}{q}$ in higgledy-piggledy arrangement.
\end{thm}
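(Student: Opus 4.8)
The plan is to apply Theorem~\ref{Res_CharHigPig}: it suffices to exhibit $(\n-k)(k+1)+1$ distinct $k$-subspaces $\Pi_1,\dots,\Pi_m$ of $\pg{\n}{q}$ (with $m:=(\n-k)(k+1)+1$) such that no $(\n-k-1)$-subspace meets all of them. Only the forward implication of that theorem is invoked, which is unconditional, so it is irrelevant that $m$ may be larger than $q$. Since $(\n-k)(k+1)=\big((\n-1-k)+1\big)\big(k+1\big)$ is symmetric under $k\mapsto\n-1-k$, Proposition~\ref{Prop_Duality} lets us assume $2k\leq\n-1$ whenever that is convenient.

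The construction I would try takes the $\Pi_i$ among the secant (and osculating) $k$-subspaces of a normal rational curve $\mathcal{C}\subseteq\pg{\n}{q}$; such a curve exists because $q>\n+1>\n$, and the number of its secant $k$-subspaces, $\binom{q+1}{k+1}$, comfortably exceeds $m$. The reason for preferring subspaces of $\mathcal{C}$ is that every incidence determinant formed from coordinate vectors of such subspaces is a generalised Vandermonde polynomial in the curve parameters; it is therefore a nonzero polynomial whose degree is bounded in terms of $\n$ alone, and $q>\n+1$ ensures that the parameters can be chosen in $\FF_q$ so that all the finitely many relevant such determinants are simultaneously nonzero.

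To see why $m$ subspaces ought to be enough, one passes to the Grassmannian $\mathcal{G}$ of $(\n-k-1)$-subspaces of $\pg{\n}{q}$, which has dimension $(\n-k)(k+1)=m-1$, in its Plücker embedding. For a $k$-subspace $\Pi$ the locus $\Delta_\Pi:=\{\mu\in\mathcal{G}:\mu\cap\Pi\neq\varnothing\}$ is cut out, in any coordinate chart of $\mathcal{G}$, by a single $(k+1)\times(k+1)$ determinant, and globally it is precisely the Plücker hyperplane section of $\mathcal{G}$ ``dual'' to $\Pi$; these hyperplane sections form a base-point-free linear system on $\mathcal{G}$. Cutting $\mathcal{G}$ with $m-1$ sufficiently general such sections therefore leaves finitely many points, and one further well-chosen section misses all of them, so that $\bigcap_{i=1}^{m}\Delta_{\Pi_i}=\varnothing$; an empty intersection over the algebraic closure is empty over $\FF_q$, and, translated back, this says precisely that no $(\n-k-1)$-subspace meets all of $\Pi_1,\dots,\Pi_m$.

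The main obstacle is making the phrase ``sufficiently general'' precise: the hyperplane sections at our disposal are the special ones arising from genuine $k$-subspaces over $\FF_q$ --- e.g.\ from the secants of $\mathcal{C}$, which share points and so are far from mutually generic --- so transversality cannot be quoted from a black box but must be checked by hand. I would do this by fixing coordinates adapted to $\mathcal{C}$, writing the conditions ``$\mu\cap\Pi_i\neq\varnothing$'' explicitly as the vanishing of the determinants above, eliminating, and reducing the emptiness of $\bigcap_i\Delta_{\Pi_i}$ to the non-vanishing of a single explicit resultant in the curve parameters; this resultant is a nonzero polynomial of degree depending only on $\n$, whence $q>\n+1$ suffices to realise it. Carrying this elimination out so that it works uniformly in $k$ --- and, along the way, verifying that the chosen secants of $\mathcal{C}$ are pairwise distinct and that there are exactly $m$ of them --- is the technical heart of the argument; the remaining steps (the reduction to $2k\leq\n-1$, the bookkeeping, the final appeal to Theorem~\ref{Res_CharHigPig}) are routine.
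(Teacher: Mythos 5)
This theorem is quoted by the paper from \cite{FancsaliSziklai3} without proof, so the relevant comparison is with the source's argument, which constructs the $(\n-k)(k+1)+1$ osculating (tangent) $k$-subspaces of the moment curve and shows directly, via the explicit structure of the incidence determinants, that an $(\n-k-1)$-subspace can meet at most $(\n-k)(k+1)$ of them. Your framework --- reduce to the forward implication of Theorem \ref{Res_CharHigPig}, attach the subspaces to a rational normal curve, and count dimensions in the Grassmannian $\mathcal{G}$ of $(\n-k-1)$-subspaces --- is essentially the same frame. But as written the proposal has a genuine gap, and you flag it yourself: the entire content of the theorem is the claim that the $m=(\n-k)(k+1)+1$ Schubert sections $\Delta_{\Pi_i}$ have empty common intersection, and the dimension count does not deliver this. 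The $\Delta_{\Pi}$ are not generic members of the hyperplane system on $\mathcal{G}$; they are the special, singular (tangent) hyperplane sections parametrised by the dual Grassmannian inside the full dual space, and $m$ such special sections of an $(m-1)$-dimensional variety can perfectly well have nonempty, even positive-dimensional, intersection (for instance this happens whenever too many of the $\Pi_i$ lie in a common hyperplane of $\pg{\n}{q}$). So transversality for the particular secant subspaces of the curve is exactly the statement to be proved, and the proposal defers it to an unexecuted elimination.

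The second unsupported step is quantitative: you assert that the final resultant in the curve parameters is a nonzero polynomial ``of degree depending only on $\n$'' and that $q>\n+1$ therefore suffices to avoid its zero locus, but nothing in the sketch bounds that degree, and a priori it grows with the number of Schubert conditions and with $k$; if the degree exceeds $q$ there may be no admissible parameter choice at all for small $q$. (Nonvanishing over $\overline{\FF}_q$, which is all a generic-transversality argument could give, is not enough --- one needs an $\FF_q$-rational point off the bad locus.) The source obtains the clean threshold $q>\n+1$ precisely by exploiting the Wronskian-type shape of the determinants for osculating subspaces rather than eliminating blindly; your choice of secant rather than osculating subspaces changes these determinants to generalised Vandermonde expressions whose nonvanishing and degrees would still have to be worked out. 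Until those two points are carried through, the argument is a plausible programme rather than a proof.
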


Although the theorems above present very strong upper bounds on the size of the smallest possible higgledy-piggledy sets of $k$-subspaces, the case of $q$ small is neglected.
Using a probabilistic approach, the authors of \cite[Theorems $4.1$ and $6.5$]{HegerNagy} very recently obtained new upper bounds for all values of $q$; one can check that their upper bounds improve the results above if and only if $q\leq\n+1$.
Especially their thorough investigation of the binary case deserves to be mentioned.

\bigskip
Besides these general results, sporadic examples of higgledy-piggledy sets can be found in the literature.

\begin{thm}\ \label{Res_SporadicExamples}
    \begin{enumerate}
        \item \cite[Theorem $3.7$, Example $9$]{DavydovEtAl,FancsaliSziklai2} There exist four pairwise disjoint lines of $\pg{3}{q}$ in higgledy-piggledy arrangement.
        \item \cite[Proposition $12$]{BartoliKissMarcuginiPambianco} If $q>36086$ is no power of $2$ or $3$, then there exist six pairwise disjoint lines of $\pg{4}{q}$ in higgledy-piggledy arrangement.
        \item \cite[Theorem $3.15$]{BartoliCossidenteMarinoPavese} There exist seven pairwise disjoint lines of $\pg{5}{q}$ in higgledy-piggledy arrangement.
    \end{enumerate}
\end{thm}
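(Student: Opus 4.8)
The plan is to handle the three items separately, in each case via Theorem~\ref{Res_CharHigPig}. Since all three statements concern sets of lines (the case $k=1$) and we only need the \emph{sufficient} direction, it suffices to exhibit the pairwise skew lines in question and to verify that no $(\n-2)$-subspace is a transversal to all of them; no restriction on $q$ enters through the criterion itself. For item $(1)$, an $(\n-2)$-subspace of $\pg{3}{q}$ is a line, so I would look for four mutually skew lines with no common transversal. The construction I have in mind: fix a hyperbolic quadric $Q^{+}(3,q)$, take three lines $\ell_1,\ell_2,\ell_3$ of one of its reguli, and choose $\ell_4$ to be a line external to the quadric (a short count of tangent, secant and external lines shows such a line exists for every prime power $q$). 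Three mutually skew lines lie on a unique hyperbolic quadric and their common transversals are exactly the lines of the opposite regulus; each such transversal lies on $Q^{+}(3,q)$ and hence misses $\ell_4$. The four lines are pairwise skew — $\ell_1,\ell_2,\ell_3$ by choice, $\ell_4$ because it avoids $Q^{+}(3,q)\supseteq\ell_1\cup\ell_2\cup\ell_3$ — so Theorem~\ref{Res_CharHigPig} applies. This is a purely finite-geometric check.

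For item $(3)$, an $(\n-2)$-subspace of $\pg{5}{q}$ is a solid, so I would exhibit seven pairwise skew lines admitting no common transversal solid. My approach would be to write down seven explicit lines coming from a highly symmetric model of $\pg{5}{q}$ — for instance (part of) an orbit under a Singer cycle, or lines defined through the $\FF_{q^2}$- or $\FF_{q^3}$-linear structure on $\FF_{q^6}$ — and then to translate ``a solid $\sigma$ meets all seven lines'' into a homogeneous system in the Grassmann coordinates of $\sigma$. The point would then be to show, by a direct rank/elimination computation whose outcome is independent of $q$, that this system has no nontrivial $\FF_q$-solution; pairwise skewness is a routine determinant check.

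For item $(2)$, an $(\n-2)$-subspace of $\pg{4}{q}$ is a plane, so the goal is six pairwise skew lines with no common transversal plane; this is the delicate case, and it is what forces the hypotheses on $q$. I would parametrise six lines explicitly over the prime field (or a small extension) and encode ``a plane $\pi$ meets all six lines'' as the existence of an $\FF_q$-rational point on a suitable algebraic set $\mathcal{X}$ cut out by the six incidence conditions on the Grassmannian of planes. After slicing $\mathcal{X}$ down to a curve, one checks over $\overline{\FF_q}$ that the only transversal planes it could parametrise are already excluded, and then the Hasse--Weil bound forces $\mathcal{X}(\FF_q)=\varnothing$ as soon as $q$ exceeds an explicit threshold; tracking the genus and the error constants should yield precisely the bound $q>36086$. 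The characteristics $2$ and $3$ would have to be set aside because there the relevant curve degenerates (it picks up extra components, or the parametrisation becomes inseparable), so the Hasse--Weil estimate is no longer usable; those cases would need a separate, likely computer-assisted, argument.

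The hard part will be item $(2)$: choosing the six lines so that the transversal-plane locus collapses to a \emph{single} curve to which the Hasse--Weil bound applies cleanly, and then being careful enough with the genus and the constants to extract the concrete value $q>36086$ and to pin down exactly the characteristics where the method breaks. Items $(1)$ and $(3)$ reduce, once the right configuration is guessed, to finite geometry and to a $q$-independent linear-algebra computation, respectively.
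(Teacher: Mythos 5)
First, note that the paper does not prove this theorem at all: it is a survey statement, with each item credited to the cited reference, and the only proof machinery the paper itself supplies in this direction is Theorem \ref{Thm_FieldRed} (field reduction), which it explains is how item $(3)$ is actually obtained and how item $(1)$ can be reproved. Your argument for item $(1)$ is correct and complete as it stands: three lines of a regulus of $Q^+(3,q)$ have the opposite regulus as their exact set of common transversals, all contained in the quadric, and an external line (which exists for every $q$, since the number of external lines is $q^2(q-1)^2/2>0$) kills them all; Theorem \ref{Res_CharHigPig} then applies with no condition on $q$. This is a legitimate classical alternative to the Baer-subline/field-reduction proof the paper sketches.

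The genuine gap is in item $(3)$. The condition that a solid $\sigma$ of $\pg{5}{q}$ meets a fixed line is a single \emph{hyperplane section} of the Grassmannian of solids $G(3,5)\subseteq\pg{14}{q}$ (the $6\times6$ determinant is linear in the Pl\"ucker coordinates of $\sigma$), and $G(3,5)$ has dimension $8$. Seven such conditions therefore cut out a subvariety of dimension at least $1$ over $\overline{\FF_q}$: common transversal solids to any seven lines \emph{always exist over the algebraic closure}. Hence no ``rank/elimination computation whose outcome is independent of $q$'' can show the system has no nontrivial solution; what must be shown is that none of these transversal solids is $\FF_q$-rational, which is an arithmetic statement about the Galois action on that locus. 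This is exactly why the cited proof (and the paper's Theorem \ref{Thm_FieldRed}) works inside a Desarguesian spread: one picks seven points of $\pg{2}{q^2}$ not contained in any $\FF_q$-linear set of rank at most $4$, so that the obstruction lives over $\FF_{q^2}$ but never descends. Your item $(2)$, by contrast, is the right strategy in outline (the transversal-plane locus of six lines in $\pg{4}{q}$ is zero-dimensional over $\overline{\FF_q}$, and one must arrange a fixed-point-free Galois action, which is where the Hasse--Weil bound, the threshold $36086$ and the excluded characteristics come from), but as written it defers every substantive step --- the choice of the six lines, the identification of the curve, its genus and irreducibility --- so it is a plan rather than a proof.
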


The authors of \cite{DavydovEtAl} also prove the existence of nine planes of $\pg{4}{q}$ in higgledy-piggledy arrangement.
However, if $q\geq7$, Theorem \ref{Res_HigPigExistenceGeneral} improves this result, as it implies the existence of seven planes of $\pg{4}{q}$ in higgledy-piggledy arrangement.

\bigskip
In this work, we focus on small non-trivial higgledy-piggledy sets of $\pg{4}{q}$ and $\pg{5}{q}$.
The article is organised as follows.
In Section \ref{Sect_ConstructionsAndExtremalSets}, we first discuss three general construction methods to obtain small higgledy-piggledy sets of $\pg{\n}{q}$.
Secondly, we bundle some curious observations concerning general higgledy-piggledy line sets and sets of $(\n-2)$-subspaces.

Section \ref{Sect_HigPig4} and \ref{Sect_HigPig5} are devoted to the search for small non-trivial higgledy-piggledy sets in $\pg{4}{q}$ and $\pg{5}{q}$, respectively.
The following summarises all main results that will be presented throughout this work.

\begin{mainres}
    For all prime powers $q$, the following higgledy-piggledy sets exist.
    \begin{enumerate}
        \item six lines of $\pg{4}{q}$, two of which intersect (Theorem \ref{Thm_SixHigPigLines4}),
        \item six planes of $\pg{4}{q}$, two of which intersect in a line (Corollary \ref{Crl_SixHigPigPlanes4}),
        \item eight pairwise disjoint planes of $\pg{5}{q}$ (Theorem \ref{Thm_EightHigPigPlanes5}), and
        \item seven solids of $\pg{5}{q}$, $q\geq7$, pairwise intersecting in a line (Corollary \ref{Crl_SevenHigPigSolids5}).
    \end{enumerate}
\end{mainres}

Finally, Section \ref{Sect_Fruits} describes some coding- and graph-theoretical results that arise due to the existence of the above higgledy-piggledy sets.

\section{Construction methods and optimal higgledy-piggledy sets}\label{Sect_ConstructionsAndExtremalSets}

The first part of this section is dedicated to the discussion of three techniques to construct higgledy-piggledy sets of $k$-subspaces: \emph{projection}, \emph{dualisation} and \emph{field reduction}.
In the second part of this section, we present some general observations concerning higgledy-piggledy line sets of minimal size and their duals, and state what it means for such sets to be \emph{optimal}.

\subsection{Construction methods}

There are several ways to construct (small) higgledy-piggledy sets of $k$-subspaces in $\pg{\n}{q}$.
Constructions via \emph{projection} or \emph{dualisation} make use of the existence of other higgledy-piggledy sets to construct new ones of similar size.
Construction via \emph{field reduction} relies on the existing knowledge of \emph{$\FF_q$-linear sets} to prove the existence of higgledy-piggledy sets of $k$-subspaces contained in Desarguesian spreads.

\subsubsection*{Construction by projection}

\begin{prop}
    Let $k\in\set{0,1,\dots,\n-1}$ and suppose that $\Strong$ is a strong $k$-blocking set of $\pg{\n}{q}$.
    Take a hyperplane $\Sigma$ and a point $P\notin\Strong\cup\Sigma$.
    Then $\Strong':=\sett{\vspan{P,S}\cap\Sigma}{S\in\Strong}$ is a strong $k$-blocking set of $\Sigma\cong\pg{\n-1}{q}$.
\end{prop}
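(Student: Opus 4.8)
The plan is to show that projection from $P$ onto $\Sigma$ sends strong $k$-blocking sets to strong $k$-blocking sets by checking the defining property directly against an arbitrary $(\n-k)$-subspace of $\Sigma$. Write $\pi\colon\pg{\n}{q}\setminus\set{P}\to\Sigma$ for the projection $X\mapsto\vspan{P,X}\cap\Sigma$, so that $\Strong'=\pi(\Strong)$; note this is well-defined precisely because $P\notin\Strong$. Let $\kappa'$ be any $(\n-k)$-subspace of $\Sigma$; I must show $\Strong'\cap\kappa'$ spans $\kappa'$.

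First I would pull $\kappa'$ back through $P$: set $\kappa:=\vspan{P,\kappa'}$. Since $P\notin\Sigma$, the subspace $\kappa$ has dimension $\n-k+1$, it contains $P$, and $\kappa\cap\Sigma=\kappa'$. Because $\Strong$ is a strong $k$-blocking set of $\pg{\n}{q}$ and $\kappa$ has dimension $\n-k+1 > \n-k$, we know that $\Strong$ meets \emph{every} $(\n-k)$-subspace contained in $\kappa$ in a set spanning that subspace; in particular $\Strong\cap\kappa$ contains a set of points spanning $\kappa$ (for instance, pick $\n-k+2$ independent points by repeatedly hitting suitable $(\n-k)$-subspaces inside $\kappa$, or simply observe that the span of $\Strong\cap\kappa$, if it were a proper subspace of $\kappa$, would be contained in some $(\n-k)$-subspace of $\kappa$ disjoint from the rest, contradicting the strong blocking property). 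Key point: $P\notin\Strong$, so $\Strong\cap\kappa$ is actually contained in $\kappa\setminus\set{P}$, hence $\pi$ is defined on all of it.

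Next I would push forward. The map $\pi$ restricted to $\kappa\setminus\set{P}$ is exactly the linear projection of $\kappa$ from its point $P$ onto the hyperplane $\kappa'=\kappa\cap\Sigma$ of $\kappa$; such a projection is surjective and maps spanning sets to spanning sets (a linear surjection sends a generating set to a generating set). Therefore $\pi(\Strong\cap\kappa)$ spans $\pi(\kappa\setminus\set{P})=\kappa'$. Finally I would check that $\pi(\Strong\cap\kappa)\subseteq\Strong'\cap\kappa'$: any point of the form $\pi(S)$ with $S\in\Strong\cap\kappa$ lies in $\Strong'$ by definition, and it lies in $\kappa'$ because $\vspan{P,S}\subseteq\kappa$ forces $\pi(S)\in\kappa\cap\Sigma=\kappa'$. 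Hence $\Strong'\cap\kappa'\supseteq\pi(\Strong\cap\kappa)$ spans $\kappa'$, so $\Strong'\cap\kappa'$ spans $\kappa'$ as well. Since $\kappa'$ was arbitrary, $\Strong'$ is a strong $k$-blocking set of $\Sigma$.

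**Main obstacle.** The only genuinely delicate point is the claim that $\Strong\cap\kappa$ spans $\kappa$ (rather than just some $(\n-k)$-subspace of it); this needs the observation that a strong $k$-blocking set meets every $(\n-k)$-subspace of the \emph{larger} space $\kappa$, not only the $(\n-k)$-subspaces of $\pg{\n}{q}$ contained in $\Sigma$, together with the little argument that if $\langle\Strong\cap\kappa\rangle$ were a proper subspace of $\kappa$ it would miss spanning some $(\n-k)$-flat of $\kappa$. Everything else is the routine fact that linear projection from a point not in the relevant configuration preserves spans and is well-defined, which is where the hypotheses $P\notin\Strong$ and $P\notin\Sigma$ are used.
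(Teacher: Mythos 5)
There is a dimension error at the very first step that propagates through the whole argument. A strong $k$-blocking set of $\Sigma\cong\pg{\n-1}{q}$ must, by the definition applied with ambient dimension $\n-1$, meet every subspace of $\Sigma$ of dimension $(\n-1)-k=\n-k-1$ in a spanning set. You instead test an arbitrary $(\n-k)$-subspace $\kappa'$ of $\Sigma$ (confirmed by your computation $\dim\vspan{P,\kappa'}=\n-k+1$). What you prove is therefore that $\Strong'$ meets every $(\n-k)$-subspace of $\Sigma$ in a spanning set, which is the condition for $\Strong'$ to be a strong $(k-1)$-blocking set of $\Sigma$ --- a strictly weaker conclusion than the one claimed (spanning every $(d-1)$-subspace implies spanning every $d$-subspace, but not conversely). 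Your ``main obstacle'' --- having to argue that $\Strong\cap\kappa$ spans a subspace $\kappa$ of dimension $\n-k+1$, which is \emph{not} directly covered by the hypothesis on $\Strong$ --- is entirely a symptom of this off-by-one: it only arises because your $\kappa$ is one dimension too big. (As an aside, the parenthetical justification you give there, ``contained in some $(\n-k)$-subspace of $\kappa$ disjoint from the rest,'' is garbled; the correct point is that if $\vspan{\Strong\cap\kappa}$ lay in a hyperplane $H$ of $\kappa$, then any other $(\n-k)$-subspace $H'$ of $\kappa$ would satisfy $\Strong\cap H'\subseteq H\cap H'\subsetneq H'$, contradicting the hypothesis.)

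The fix is immediate: take $\kappa'$ of dimension $\n-k-1$ in $\Sigma$. Then $\kappa:=\vspan{P,\kappa'}$ has dimension exactly $\n-k$, so $\Strong\cap\kappa$ spans $\kappa$ directly by hypothesis, and your projection argument (well-defined since $P\notin\Strong$, surjective from $\kappa$ onto its hyperplane $\kappa'$, sends spanning sets to spanning sets, and lands inside $\Strong'\cap\kappa'$) finishes the proof with no delicate point at all. Once corrected, your argument is just the direct (contrapositive-free) form of the paper's two-line proof by contradiction, which assumes $\Strong'\cap\Pi$ lies in an $(\n-k-2)$-space $\Pi'$ for some $(\n-k-1)$-subspace $\Pi\subseteq\Sigma$ and derives that $\Strong\cap\vspan{\Pi,P}\subseteq\vspan{\Pi',P}$.
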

\begin{proof}
    Suppose, to the contrary, that there exists an $(\n-k-1)$-subspace $\Pi\subseteq\Sigma$ that meets $\Strong'$ in a point set contained in an $(\n-k-2)$-space $\Pi'$.
    By definition of $\Strong'$, this means that $\vspan{\Pi,P}$ is an $(\n-k)$-space that meets $\Strong$ in a point set contained in the $(\n-k-1)$-space $\vspan{\Pi',P}$, a contradiction.
\end{proof}

\begin{crl}\label{Crl_Projection}
    Let $k\in\set{0,1,\dots,\n-1}$ and suppose that $\mathcal{K}$ is a higgledy-piggledy set of $k$-subspaces in $\pg{\n}{q}$.
    Take a hyperplane $\Sigma$ and a point $P\notin\Sigma$ not contained in any of the elements of $\mathcal{K}$.
    Then $\mathcal{K}':=\sett{\vspan{P,\kappa}\cap\Sigma}{\kappa\in\mathcal{K}}$ is a higgledy-piggledy set of $k$-subspaces in $\Sigma\cong\pg{\n-1}{q}$ of size at most $|\mathcal{K}|$.
\end{crl}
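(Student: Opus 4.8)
The plan is to deduce this directly from the Proposition that immediately precedes it, since a higgledy-piggledy set is by definition a set of $k$-subspaces whose union of points is a strong $k$-blocking set. First I would let $\Strong$ denote the set of all points lying in at least one element of $\mathcal{K}$; by hypothesis $\Strong$ is a strong $k$-blocking set of $\pg{\n}{q}$, and since $P$ lies in no element of $\mathcal{K}$, we have $P\notin\Strong$, and $P\notin\Sigma$ by assumption, so the hypotheses of the Proposition are met. Hence $\Strong':=\sett{\vspan{P,S}\cap\Sigma}{S\in\Strong}$ is a strong $k$-blocking set of $\Sigma\cong\pg{\n-1}{q}$.

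The second step is to check that $\Strong'$ is exactly the union of the points of the projected subspaces $\mathcal{K}'$. For a fixed $\kappa\in\mathcal{K}$, the image $\vspan{P,\kappa}\cap\Sigma$ is the cone $\vspan{P,\kappa}$ intersected with $\Sigma$; since $P\notin\kappa$, this cone is a $(k+1)$-subspace and its intersection with the hyperplane $\Sigma$ is a $k$-subspace (it cannot be all of $\vspan{P,\kappa}$ because $P\notin\Sigma$). Moreover a point of $\Sigma$ lies in $\vspan{P,\kappa}\cap\Sigma$ if and only if it is of the form $\vspan{P,S}\cap\Sigma$ for some point $S\in\kappa$: given such a point $T$, the line $\vspan{P,T}$ meets $\kappa$ in a (unique) point $S$ with $\vspan{P,S}\cap\Sigma=T$; conversely each $S\in\kappa$ produces such a $T$. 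Ranging over all $\kappa\in\mathcal{K}$ shows that $\bigcup_{\kappa\in\mathcal{K}}(\vspan{P,\kappa}\cap\Sigma)$ coincides with $\sett{\vspan{P,S}\cap\Sigma}{S\in\Strong}=\Strong'$, which is a strong $k$-blocking set. By definition, $\mathcal{K}'$ is therefore a higgledy-piggledy set of $k$-subspaces in $\Sigma$.

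Finally, the bound on the size is immediate: the map $\kappa\mapsto\vspan{P,\kappa}\cap\Sigma$ is a well-defined function from $\mathcal{K}$ to $\mathcal{K}'$ that is surjective by construction, so $|\mathcal{K}'|\leq|\mathcal{K}|$ (with possible strict inequality if two distinct elements of $\mathcal{K}$ project onto the same $k$-subspace of $\Sigma$). I do not expect any real obstacle here; the only point requiring a moment's care is the elementary verification that projection from $P$ commutes with taking unions of point sets and that each projected $k$-subspace genuinely has dimension $k$ (using $P\notin\kappa$ and $P\notin\Sigma$), which is a routine dimension count.
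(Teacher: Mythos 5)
Your proposal is correct and follows exactly the route the paper intends: the corollary is stated as an immediate consequence of the preceding proposition, and you supply the (routine) verifications that the union of the projected subspaces coincides with the projected point set $\Strong'$, that each projection $\vspan{P,\kappa}\cap\Sigma$ is a genuine $k$-subspace, and that the size can only decrease. Nothing is missing.
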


Corollary \ref{Crl_Projection} depicts the construction technique of higgledy-piggledy sets by \emph{projection}, which is a simple but potentially powerful technique (see e.g.\ Remark \ref{Rmk_EvenToOddProjection}).

\subsubsection*{Construction by dualisation}

A second construction technique arises by making use of the natural \emph{duality} of $\pg{\n}{q}$, and although this insight is anything but groundbreaking, we want to note that this has also been pointed out in \cite[Theorem $9$, Proposition $10$]{FancsaliSziklai3}.

\begin{prop}\label{Prop_Duality}
    Let $k\in\set{0,1,\dots,\n-1}$ and suppose that $\mathcal{K}$ is a higgledy-piggledy set of $k$-subspaces in $\pg{\n}{q}$ with $|\mathcal{K}|\leq q$.
    Then the set $\mathcal{K}^\bot$ consisting of the dual subspaces of the elements in $\mathcal{K}$ is a higgledy-piggledy set of $(\n-k-1)$-subspaces in $\pg{\n}{q}$.
\end{prop}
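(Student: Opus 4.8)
The plan is to run everything through the characterisation in Theorem \ref{Res_CharHigPig}, reducing the statement about strong blocking sets to a statement about which subspaces of complementary dimension can meet every member of a family, and then to check that duality transports this property from $\mathcal{K}$ to $\mathcal{K}^\bot$. I would first recall the elementary properties of the duality map $\bot$ on $\pg{\n}{q}$: it is an inclusion-reversing involution on subspaces with $\dim\pi^\bot=\n-1-\dim\pi$, and it satisfies $(\pi\cap\rho)^\bot=\vspan{\pi^\bot,\rho^\bot}$ and $\vspan{\pi,\rho}^\bot=\pi^\bot\cap\rho^\bot$; in particular $\bot$ is a bijection, so $|\mathcal{K}^\bot|=|\mathcal{K}|$ and $\mathcal{K}^\bot$ genuinely consists of $(\n-k-1)$-subspaces.

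The key observation I would isolate is the following: for a $k$-subspace $\mu$ and a $k$-subspace $\kappa$ of $\pg{\n}{q}$, the subspace $\mu$ meets $\kappa^\bot$ if and only if $\mu^\bot$ meets $\kappa$. The reason is that $\mu$ and $\kappa^\bot$ have complementary dimensions $k$ and $\n-k-1$, so by Grassmann's identity they are disjoint precisely when $\vspan{\mu,\kappa^\bot}=\pg{\n}{q}$; dualising, $\mu\cap\kappa^\bot=\emptyset$ is then equivalent to $\mu^\bot\cap\kappa=\vspan{\mu,\kappa^\bot}^\bot=\emptyset$. (The roles of the two sides are symmetric, since $\mu^\bot$ and $\kappa$ likewise have complementary dimensions.) This is exactly the spot where one must be careful: "meets" on one side has to correspond to "meets" and not to "spans the whole space" on the other, and this is guaranteed precisely by the identity $k+(\n-k-1)=\n-1$.

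Now the deduction is short. Since $\mathcal{K}$ is a higgledy-piggledy set of $k$-subspaces with $|\mathcal{K}|\leq q$, the converse direction of Theorem \ref{Res_CharHigPig} yields that no $(\n-k-1)$-subspace meets every element of $\mathcal{K}$. Suppose, for contradiction, that some $k$-subspace $\mu$ meets every element of $\mathcal{K}^\bot$, i.e.\ $\mu$ meets $\kappa^\bot$ for all $\kappa\in\mathcal{K}$. By the observation above, the $(\n-k-1)$-subspace $\mu^\bot$ then meets $\kappa$ for all $\kappa\in\mathcal{K}$, contradicting the previous sentence. Hence no $k$-subspace meets every element of $\mathcal{K}^\bot$; since for a family of $(\n-k-1)$-subspaces the relevant blocking subspaces have dimension $\n-(\n-k-1)-1=k$, the forward direction of Theorem \ref{Res_CharHigPig} applied to $\mathcal{K}^\bot$ shows that $\mathcal{K}^\bot$ is a higgledy-piggledy set of $(\n-k-1)$-subspaces.

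I do not expect any real obstacle here; the only subtlety is the dimension bookkeeping in the duality observation (and the accompanying convention $\dim\emptyset=-1$), and the hypothesis $|\mathcal{K}|\leq q$ is used exactly once, namely to invoke the converse half of Theorem \ref{Res_CharHigPig}. It plays no further role, and in particular the forward half, used on $\mathcal{K}^\bot$, requires no cardinality bound.
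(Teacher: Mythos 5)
Your proof is correct and follows exactly the same route as the paper's: apply the converse half of Theorem \ref{Res_CharHigPig} to $\mathcal{K}$ (this is where $|\mathcal{K}|\leq q$ is used), dualise the non-existence statement, and apply the forward half to $\mathcal{K}^\bot$. The only difference is that you spell out the dimension bookkeeping behind "dualising the statement" (that disjointness of subspaces of complementary dimensions is equivalent to disjointness of their duals), which the paper leaves implicit.
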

\begin{proof}
    By Theorem \ref{Res_CharHigPig}, no $(\n-k-1)$-subspace meets each element of $\mathcal{K}$.
    Taking the dual of this statement, we obtain the knowledge that no $k$-subspace meets each element of $\mathcal{K}^\bot$.
    Applying Theorem \ref{Res_CharHigPig} yet again, we conclude that $\mathcal{K}^\bot$ must be a higgledy-piggledy set of $(\n-k-1)$-subspaces in $\pg{\n}{q}$.
\end{proof}

Theorem \ref{Thm_LowerBound} is an excellent example of the usage of this method.
Moreover, as we will observe in Section \ref{Sect_HigPig4}, this technique will imply the existence of a small higgledy-piggledy plane set in $\pg{4}{q}$ (Corollary \ref{Crl_SixHigPigPlanes4}).

\subsubsection*{Construction by field reduction}

This particular method for constructing higgledy-piggledy sets is useful if (and only if) $\n+1$ is composite.

Let $\n',k\in\set{0,1,\dots,\n-1}$.
The idea behind \emph{field reduction} is interpreting a projective geometry $\pg{\n'}{q^{k+1}}$ as its underlying vector space $\V{\n'+1}{q^{k+1}}$, which is known to be isomorphic to $\V{(\n'+1)(k+1)}{q}$, which in turn naturally translates to $\pg{(\n'+1)(k+1)-1}{q}$.
In this way, one obtains a correspondence between subspaces of $\pg{\n'}{q^{k+1}}$ and subspaces of $\pg{(\n'+1)(k+1)-1}{q}$ by `reducing' the underlying field.
A great survey on this topic can be found in \cite{LavrauwVandeVoorde2}.

The authors of this work formally introduce the \emph{field reduction map}
\begin{equation}\label{Eq_FieldRed}
    \mathcal{F}_{\n'+1,k+1,q}:\pg{\n'}{q^{k+1}}\rightarrow\pg{(\n'+1)(k+1)-1}{q}\textnormal{,}
\end{equation}
which maps subspaces onto subspaces by viewing these as embedded projective geometries and applying field reduction.

\begin{df}
    A \emph{$k$-spread} of $\pg{\n}{q}$ is a set of pairwise disjoint $k$-spaces covering all points of $\pg{\n}{q}$.
\end{df}

For any point set $\mathcal{P}$ of $\pg{\n'}{q^{k+1}}$, define
\[
    \mathcal{F}_{\n'+1,k+1,q}(\mathcal{P}):=\sett{\mathcal{F}_{\n'+1,k+1,q}(P)}{P\in\mathcal{P}}\textnormal{.}
\]
One of the many properties of the field reduction map is the fact that if $\mathcal{P}$ is the set of all points in $\pg{\n'}{q^{k+1}}$, then $\mathcal{F}_{\n'+1,k+1,q}(\mathcal{P})$ is a $k$-spread of $\pg{(\n'+1)(k+1)-1}{q}$.
A $k$-spread isomorphic to this spread is generally called \emph{Desarguesian}; we will denote this spread by $\mathcal{D}_{\n'+1,k+1,q}$.

The following definition is a generalisation of the concept of subgeometries in a projective space.

\begin{df}
    Let $\mathcal{P}$ be a point set of $\pg{\n'}{q^{k+1}}$.
    Then we will call $\mathcal{P}$ an \emph{$\FF_q$-linear set} of rank $k'$, $k'\in\NN$, if there exists a $(k'-1)$-subspace $\kappa$ of $\pg{(\n'+1)(k+1)-1}{q}$ such that $\mathcal{F}_{\n'+1,k+1,q}(\mathcal{P})$ is precisely the set of all elements of $\mathcal{D}_{\n'+1,k+1,q}$ that intersect $\kappa$, and $k'$ is minimal w.r.t.\ this property.
\end{df}

Note that whenever an $\FF_q$-linear set in $\pg{\n'}{q^{k+1}}$ has a rank larger than $\n-k$, it contains all points of $\pg{\n'}{q^{k+1}}$.
With this in mind, the following theorem basically states that any point set that is not contained in a `proper' $\FF_q$-linear set gives rise to a higgledy-piggledy set of $k$-spaces.

\begin{thm}\label{Thm_FieldRed}
    Let $k\in\set{0,1,\dots,\n-1}$ such that $\n+1=(\n'+1)(k+1)$ for a certain $\n'\in\set{0,1,\dots,\n-1}$.
    Suppose that $\mathcal{P}$ is a point set of $\pg{\n'}{q^{k+1}}$ that is not contained in any $\FF_q$-linear set of rank at most $\n-k$.
    Then $\mathcal{F}_{\n'+1,k+1,q}(\mathcal{P})$ is a higgledy-piggledy set of pairwise disjoint $k$-subspaces in $\pg{\n}{q}$.
\end{thm}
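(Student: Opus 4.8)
The key is to use the characterisation in Theorem~\ref{Res_CharHigPig}: it suffices to show that no $(\n-k-1)$-subspace of $\pg{\n}{q}$ meets every element of $\mathcal{F}_{\n'+1,k+1,q}(\mathcal{P})$. (Disjointness of the images is automatic, since $\mathcal{F}_{\n'+1,k+1,q}(\mathcal{P})$ is a subset of the Desarguesian spread $\mathcal{D}_{\n'+1,k+1,q}$.) So I would argue by contraposition: suppose some $(\n-k-1)$-subspace $\kappa$ of $\pg{\n}{q}=\pg{(\n'+1)(k+1)-1}{q}$ meets $\mathcal{F}_{\n'+1,k+1,q}(P)$ for every $P\in\mathcal{P}$, and deduce that $\mathcal{P}$ is contained in an $\FF_q$-linear set of rank at most $\n-k$.

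First I would set $k':=\n-k$ and note that a $(k'-1)$-subspace of $\pg{\n}{q}$ is exactly a $(\n-k-1)$-subspace, matching the dimension in the definition of an $\FF_q$-linear set of rank $k'$. Given such a $\kappa$, consider the set $L$ of all elements of the Desarguesian spread $\mathcal{D}_{\n'+1,k+1,q}$ that intersect $\kappa$; by definition of $\FF_q$-linear set, $\mathcal{F}_{\n'+1,k+1,q}^{-1}(L)$ is an $\FF_q$-linear set of $\pg{\n'}{q^{k+1}}$ whose rank is at most $\dim(\kappa)+1=\n-k$ (the rank is the minimal dimension-plus-one of a subspace producing this spread-set, hence bounded by that of any particular witness such as $\kappa$). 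By the standing hypothesis, $\mathcal{F}_{\n'+1,k+1,q}(P)\in L$ for every $P\in\mathcal{P}$, i.e.\ $\mathcal{P}\subseteq\mathcal{F}_{\n'+1,k+1,q}^{-1}(L)$, so $\mathcal{P}$ lies in an $\FF_q$-linear set of rank at most $\n-k$, contradicting the assumption on $\mathcal{P}$. Hence no such $\kappa$ exists and Theorem~\ref{Res_CharHigPig} gives that $\mathcal{F}_{\n'+1,k+1,q}(\mathcal{P})$ is in higgledy-piggledy arrangement.

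One bookkeeping point to verify carefully: Theorem~\ref{Res_CharHigPig} provides the implication ``no $(\n-k-1)$-subspace meets each element $\Rightarrow$ higgledy-piggledy'' unconditionally (the size restriction $|\mathcal{K}|\leq q$ is only needed for the converse), so there is no hidden cardinality hypothesis on $\mathcal{P}$ to worry about. I would also remark that the images $\mathcal{F}_{\n'+1,k+1,q}(P)$, $P\in\mathcal{P}$, are genuinely $k$-subspaces (each point of $\pg{\n'}{q^{k+1}}$ field-reduces to a $k$-space) and are pairwise distinct because $\mathcal{F}_{\n'+1,k+1,q}$ is injective on points, so $|\mathcal{F}_{\n'+1,k+1,q}(\mathcal{P})|=|\mathcal{P}|$.

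The main obstacle — really the only substantive point — is matching the rank bound: I must make sure that when $\kappa$ is a $(\n-k-1)$-subspace whose ``spread-shadow'' $L$ contains all the $\mathcal{F}_{\n'+1,k+1,q}(P)$, the resulting $\FF_q$-linear set has rank \emph{at most} $\n-k$ rather than exactly the dimension dictated by $\kappa$; this is immediate from the minimality clause in the definition of rank, but it is the step where the logic could silently go wrong if one conflated ``rank'' with ``$\dim\kappa+1$''. Everything else is a direct unwinding of definitions plus one application of Theorem~\ref{Res_CharHigPig}.
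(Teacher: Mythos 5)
Your proposal is correct and follows essentially the same route as the paper: both reduce the claim via Theorem~\ref{Res_CharHigPig} to showing that an $(\n-k-1)$-subspace meeting every element of $\mathcal{F}_{\n'+1,k+1,q}(\mathcal{P})$ would force $\mathcal{P}$ into an $\FF_q$-linear set of rank at most $\n-k$, contradicting the hypothesis. Your additional care about the minimality clause in the definition of rank and the remark that disjointness is automatic from the spread are both sound (the paper leaves these implicit).
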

\begin{proof}
    Suppose, to the contrary, that $\mathcal{F}_{\n'+1,k+1,q}(\mathcal{P})$ is not a higgledy-piggledy set of $k$-subspaces in $\pg{\n}{q}$.
    By Theorem \ref{Res_CharHigPig}, there exists an $(\n-k-1)$-subspace that meets all elements of $\mathcal{F}_{\n'+1,k+1,q}(\mathcal{P})$, implying that the latter is contained in an $\FF_q$-linear set of rank at most $\n-k$, a contradiction.
\end{proof}

The idea behind Theorem \ref{Thm_FieldRed} is to search for higgledy-piggledy sets as a subset of a Desarguesian spread and was first cleverly used in \cite{BartoliCossidenteMarinoPavese} for the case $\n'=2$ and $k=1$ to prove the existence of seven lines of $\pg{5}{q}$ in higgledy-piggledy arrangement (see Theorem \ref{Res_SporadicExamples}($3.$)).
As a side note, using this method, one can even produce a slightly more elegant proof for Theorem \ref{Res_SporadicExamples}($1.$) than the one currently available in the literature \cite[Theorem $3.7$, Example $9$]{DavydovEtAl,FancsaliSziklai2}: as a Baer subline (see Definition \ref{Def_Subline} for $m=2$) is uniquely determined by any three of its points, one can use Theorem \ref{Thm_FieldRed} and choose four points in $\pg{1}{q^2}$ not contained in a Baer subline (which is precisely an $\FF_q$-linear set of rank $2$) to obtain a higgledy-piggledy set of four pairwise disjoint lines of $\pg{3}{q}$.

\subsection{Optimal higgledy-piggledy line sets and sets of \texorpdfstring{$\boldsymbol{(\n-2)}$}{(N-2)}-subspaces}

We now shift our focus to general higgledy-piggledy line sets of smallest theoretical size.

\begin{lm}\label{Lm_SpanProperty}
    Suppose that $\mathcal{L}$ is a higgledy-piggledy line set of $\pg{\n}{q}$ with $|\mathcal{L}|=\n+\left\lfloor\frac{\n}{2}\right\rfloor\leq q$.
    Then every $\left\lceil\frac{\n+1}{2}\right\rceil$ lines of $\mathcal{L}$ span the whole space $\pg{\n}{q}$.
\end{lm}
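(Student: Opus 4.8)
The plan is to argue by contradiction via the characterisation in Theorem \ref{Res_CharHigPig}. Suppose some $\left\lceil\frac{\n+1}{2}\right\rceil$ lines of $\mathcal{L}$ — call this subset $\mathcal{M}$ — fail to span $\pg{\n}{q}$, so they lie in a hyperplane, or more generally in some subspace $\Pi$ of dimension at most $\n-1$. The remaining lines form a set $\mathcal{L}\setminus\mathcal{M}$ of size $|\mathcal{L}|-\left\lceil\frac{\n+1}{2}\right\rceil = \n+\left\lfloor\frac{\n}{2}\right\rfloor-\left\lceil\frac{\n+1}{2}\right\rceil = \n-1$ lines (using $\left\lfloor\frac{\n}{2}\right\rfloor+\left\lceil\frac{\n+1}{2}\right\rceil = \n+1$). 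The idea is that $\n-1$ lines cannot, on their own, be "spread out enough": I want to produce an $(\n-2)$-subspace meeting every element of $\mathcal{L}$, contradicting that $\mathcal{L}$ is higgledy-piggledy (here $k=1$, so $(\n-k-1)=(\n-2)$, and $|\mathcal{L}|\leq q$ lets me invoke the converse direction of Theorem \ref{Res_CharHigPig}).

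The key step is a dimension count on how a subspace can be forced to meet a bounded number of lines. First I would record the general fact: given $m$ lines in $\pg{\n}{q}$, there is a subspace of dimension at most $m-1$ meeting all of them (take a point on each line and span them). Applied to $\mathcal{L}\setminus\mathcal{M}$, we get a subspace $\Sigma$ of dimension at most $\n-2$ meeting all $\n-1$ of those lines. Now I need to also catch the lines in $\mathcal{M}$. Since those lie inside $\Pi$ (dimension $\leq\n-1$), I would try to enlarge $\Sigma$ within $\Pi$, or intersect cleverly: the union $\mathcal{M}$ is a higgledy-piggledy-like configuration confined to $\Pi$, and $\left\lceil\frac{\n+1}{2}\right\rceil$ lines confined to a space of dimension $\leq\n-1$ must themselves be "sub-higgledy-piggledy" — in fact, inside $\Pi$, which has dimension $d\leq\n-1$, a set of $\left\lceil\frac{\n+1}{2}\right\rceil$ lines that were part of a higgledy-piggledy set must have a $(d-2)$-subspace through them unless $\left\lceil\frac{\n+1}{2}\right\rceil$ is large enough relative to $d$, i.e.\ unless they already span $\Pi$; but spanning a space of dimension $\leq\n-1$ with $\left\lceil\frac{\n+1}{2}\right\rceil$ lines is possible only when $d\leq 2\left\lceil\frac{\n+1}{2}\right\rceil-1$. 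The cleanest route is probably: inside $\Pi$, take one point from each line of $\mathcal{M}$; their span has dimension at most $\left\lceil\frac{\n+1}{2}\right\rceil-1$, and I combine this with a point from each line of $\mathcal{L}\setminus\mathcal{M}$, giving a subspace of dimension at most $\left(\left\lceil\frac{\n+1}{2}\right\rceil-1\right)+(\n-1) = \n+\left\lceil\frac{\n+1}{2}\right\rceil-2$ — which is too big. So the naive union is not enough; I must use the structure of $\Pi$ more carefully, choosing the $\n-1$ transversal points so their span meets $\Pi$ in low dimension, or induct on $\n$.

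The honest obstacle is exactly this bookkeeping: bounding the dimension of the span of the chosen transversal points simultaneously for $\mathcal{M}$ and for $\mathcal{L}\setminus\mathcal{M}$ so that the total stays $\leq\n-2$. I expect the resolution to be an inductive argument: project from a well-chosen point (using Corollary \ref{Crl_Projection}) to reduce $\n$ by one, tracking how the two quantities $\n+\left\lfloor\frac{\n}{2}\right\rfloor$ and $\left\lceil\frac{\n+1}{2}\right\rceil$ change — note $\left\lceil\frac{\n+1}{2}\right\rceil$ decreases by one roughly every two steps while the hypothesis count decreases by the right amount — with the base cases ($\n=2,3$) checked directly, where the statement says respectively that every $2$ lines of a $3$-line higgledy-piggledy set span $\pg{2}{q}$ (two distinct concurrent-or-not lines always span the plane, true) and every $2$ lines of a $4$-line higgledy-piggledy set span $\pg{3}{q}$ (two skew lines span, and a higgledy-piggledy line set in $\pg{3}{q}$ has no two coplanar lines, by Theorem \ref{Res_CharHigPig} since two coplanar lines meet, and their common point would be met... — this needs the finer argument). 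Once the inductive machinery is set up, the step where two confined lines would create a low-dimensional transversal is where Theorem \ref{Res_CharHigPig} delivers the contradiction.
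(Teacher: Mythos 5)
There is a genuine gap, and you in fact point at it yourself: the proof is never completed, because you get stuck on "catching" the lines of $\mathcal{M}$ with the same low-dimensional subspace that catches $\mathcal{L}\setminus\mathcal{M}$, and the inductive/projection escape route you sketch at the end is not carried out. The missing idea is a single, simple observation that makes all of that bookkeeping unnecessary: \emph{you do not need to choose any points on the lines of $\mathcal{M}$ at all}. If the $\left\lceil\frac{\n+1}{2}\right\rceil$ lines of $\mathcal{M}$ lie in a hyperplane $\Sigma$ (take any hyperplane containing their span), then every $(\n-2)$-subspace \emph{of} $\Sigma$ automatically meets each line of $\mathcal{M}$, since a line and a hyperplane of the $(\n-1)$-dimensional space $\Sigma$ always intersect. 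So the lines of $\mathcal{M}$ are met "for free", and the only thing left to arrange is that the $(\n-2)$-subspace also meets the $\n-1$ lines of $\mathcal{L}\setminus\mathcal{M}$.

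For that, the other adjustment you need is to choose your transversal points \emph{inside} $\Sigma$: each $\ell\in\mathcal{L}\setminus\mathcal{M}$ meets the hyperplane $\Sigma$ in a nonempty subspace $\Sigma\cap\ell$, so pick one point there for each of the $\n-1$ lines. These points span a subspace $\Pi\subseteq\Sigma$ of dimension at most $\n-2$; any $(\n-2)$-subspace of $\Sigma$ containing $\Pi$ then meets every line of $\mathcal{L}$, contradicting Theorem \ref{Res_CharHigPig} (whose converse applies because $|\mathcal{L}|\leq q$). Your version of the dimension count fails precisely because you take a point on each line of $\mathcal{L}\setminus\mathcal{M}$ without forcing it into $\Sigma$ and then additionally try to span points of $\mathcal{M}$, which inflates the dimension to $\n+\left\lceil\frac{\n+1}{2}\right\rceil-2$. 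With the two fixes above the argument closes in three lines and no induction or base-case analysis is needed; this is exactly the paper's proof.
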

\begin{proof}
    Suppose, to the contrary, that there exists a subset $\mathcal{L}'\subseteq\mathcal{L}$ consisting of $\left\lceil\frac{\n+1}{2}\right\rceil$ lines contained in a fixed hyperplane $\Sigma$.
    For each $\ell\in\mathcal{L}\setminus\mathcal{L}'$, choose a point in the non-empty subspace $\Sigma\cap\ell$.
    This results in a choice of at most $\n+\left\lfloor\frac{\n}{2}\right\rfloor-\left\lceil\frac{\n+1}{2}\right\rceil=\n-1$ points in $\Sigma$ spanning a subspace $\Pi\subseteq\Sigma$ of dimension at most $\n-2$.
    Any $(\n-2)$-subspace of $\Sigma$ through $\Pi$ is an $(\n-2)$-subspace that intersects every line of $\mathcal{L}$, contradicting Theorem \ref{Res_CharHigPig}.
\end{proof}

In the propositions below, a \emph{pair of lines} is meant to be an \emph{unordered} pair of lines, i.e.\ a set of two lines.

\begin{prop}\label{Prop_OptimalHigPigLines}
    Suppose that $\mathcal{L}$ is a higgledy-piggledy line set of $\pg{\n}{q}$ with $|\mathcal{L}|=\n+\left\lfloor\frac{\n}{2}\right\rfloor\leq q$.
    Then the following holds.
    \begin{enumerate}
        \item If $\n$ is odd, the lines of $\mathcal{L}$ are pairwise disjoint.
        \item If $\n\geq4$ is even, at most two lines of $\mathcal{L}$ intersect.
    \end{enumerate}
\end{prop}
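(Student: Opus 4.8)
The plan is to use Lemma~\ref{Lm_SpanProperty} as the key leverage: since $|\mathcal{L}| = \n + \lfloor\frac{\n}{2}\rfloor \leq q$, every $\lceil\frac{\n+1}{2}\rceil$ lines of $\mathcal{L}$ span $\pg{\n}{q}$. I want to translate "too many pairs of lines intersect" into "some $\lceil\frac{\n+1}{2}\rceil$-subset fails to span", contradicting the lemma. For part $(1)$, suppose $\n$ is odd and two lines $\ell_1, \ell_2 \in \mathcal{L}$ meet in a point $P$. Then $\ell_1 \cup \ell_2$ spans a plane; more generally, pick a chain of lines and count dimensions. The precise mechanism: if $\ell_1$ and $\ell_2$ intersect, then $\vspan{\ell_1, \ell_2}$ has dimension $2$, so any collection containing both of them spans a subspace of dimension at most $2 + 2(m-2) = 2m-2$ where $m$ is the size of the collection. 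For this to still span $\pg{\n}{q}$ we need $2m - 2 \geq \n$, i.e.\ $m \geq \frac{\n+2}{2} = \frac{\n}{2}+1$. When $\n$ is odd, $\lceil\frac{\n+1}{2}\rceil = \frac{\n+1}{2} < \frac{\n}{2}+1$, so one can take an $\lceil\frac{\n+1}{2}\rceil$-subset of $\mathcal{L}$ containing both $\ell_1$ and $\ell_2$ that cannot span $\pg{\n}{q}$ — contradicting Lemma~\ref{Lm_SpanProperty}. This proves $(1)$.

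For part $(2)$, with $\n \geq 4$ even, the bound $\lceil\frac{\n+1}{2}\rceil = \frac{\n}{2}+1 = \frac{\n+2}{2}$ now exactly accommodates one intersecting pair (since $2(\frac{\n}{2}+1) - 2 = \n$), so a single pair is not immediately forbidden; one must rule out two pairs. The strategy is to assume there are two distinct intersecting pairs and split into cases according to how the (up to four) lines involved are arranged. If the two pairs are disjoint as sets of lines, say $\{\ell_1,\ell_2\}$ and $\{\ell_3,\ell_4\}$ with $\ell_1 \cap \ell_2 \neq \emptyset$ and $\ell_3 \cap \ell_4 \neq \emptyset$, then $\vspan{\ell_1,\ell_2}$ and $\vspan{\ell_3,\ell_4}$ are each planes, so an $\lceil\frac{\n+1}{2}\rceil$-subset containing all four has span of dimension at most $2 + 2 + 2(\lceil\frac{\n+1}{2}\rceil - 4) = \n - 2 < \n$ — a contradiction (this needs $\lceil\frac{\n+1}{2}\rceil \geq 4$, i.e.\ $\n \geq 6$; the case $\n = 4$, where $\lceil\frac{\n+1}{2}\rceil = 3$ and one cannot fit four lines, must be handled separately, perhaps by noting the subset of three lines taken from $\{\ell_1,\ell_2,\ell_3\}$ already spans only a $4$-space with equality forced, then pushing on the fourth line). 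If instead the two pairs share a common line, say $\{\ell_1,\ell_2\}$ and $\{\ell_1,\ell_3\}$ with $\ell_1$ meeting both $\ell_2$ and $\ell_3$, then $\vspan{\ell_1,\ell_2,\ell_3}$ has dimension at most $4$, and again an appropriately chosen $\lceil\frac{\n+1}{2}\rceil$-subset through these three lines spans at most $4 + 2(\lceil\frac{\n+1}{2}\rceil - 3) = \n - 1 < \n$, a contradiction.

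The main obstacle I expect is the bookkeeping in the even case: getting the dimension counts and the small-$\n$ boundary cases (particularly $\n = 4$, and to a lesser extent $\n=6$) exactly right, since there the inequality $2m-2 \geq \n$ is tight and there is no slack. A clean way to organise this is to prove a small auxiliary observation first: if $\mathcal{M} \subseteq \mathcal{L}$ is any subset in which the total number of intersecting pairs is $t \geq 1$, then $\dim\vspan{\mathcal{M}} \leq 2|\mathcal{M}| - 1 - (\text{something increasing in }t)$; more carefully, each intersecting pair with no shared line drops the dimension by (at least) one below $2|\mathcal{M}|-1$, and shared-line configurations drop it further. Then $(2)$ follows by assuming $t \geq 2$, choosing $\mathcal{M}$ of size $\lceil\frac{\n+1}{2}\rceil$ containing the lines witnessing these two pairs, and deriving $\dim\vspan{\mathcal{M}} < \n$. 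One should double-check that $\mathcal{L}$ has enough lines to enlarge the witnessing set up to size $\lceil\frac{\n+1}{2}\rceil$, which it does since $\n + \lfloor\frac{\n}{2}\rfloor \geq \lceil\frac{\n+1}{2}\rceil$ for all $\n \geq 1$.
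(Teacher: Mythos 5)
Your part $(1)$ and your treatment of the shared-line sub-case of part $(2)$ follow essentially the paper's route (exhibit a $\left\lceil\frac{\n+1}{2}\right\rceil$-subset of $\mathcal{L}$ whose span is too small and invoke Lemma \ref{Lm_SpanProperty}), but there is a genuine gap in the remaining sub-case: four distinct lines forming two intersecting pairs with \emph{distinct} intersection points $S_1\neq S_2$. For $\n\geq6$ your span count does work once corrected (see below), but for $\n=4$ one has $\left\lceil\frac{\n+1}{2}\right\rceil=3$, so no admissible subset contains all four witnessing lines, and a $3$-subset containing a single intersecting pair spans a space of dimension at most $2+2=4=\n$ --- perfectly consistent with Lemma \ref{Lm_SpanProperty}. ``Pushing on the fourth line'' is not an argument, and no refinement of the span lemma can close this, because the witnessing configuration is strictly larger than $\left\lceil\frac{\n+1}{2}\right\rceil$. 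The paper escapes by switching tools: since $S_1\neq S_2$, the line $s:=\vspan{S_1,S_2}$ is a transversal to all four lines; adjoining $\frac{\n-2}{2}$ further lines of $\mathcal{L}$ to $s$ yields a space of dimension at most $\n-1$, hence contained in a hyperplane $\Sigma$; choosing one point of $\Sigma\cap\ell$ for each remaining $\ell\in\mathcal{L}$ and extending the span of $s$ and these points (dimension at most $\n-2$) to an $(\n-2)$-subspace of $\Sigma$ produces an $(\n-2)$-space meeting every line of $\mathcal{L}$, contradicting Theorem \ref{Res_CharHigPig} directly (this is where $|\mathcal{L}|\leq q$ is used a second time). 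You need this, or something equivalent, to settle $\n=4$ --- which is precisely the case the paper later relies on.

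Separately, two of your dimension counts are off. Two line-disjoint planes span a space of dimension up to $5$, not $4$ (the bound $2+2$ is only valid if the planes meet), so the correct estimate in the disjoint-pairs case is $5+2\left(\left\lceil\frac{\n+1}{2}\right\rceil-4\right)=\n-1$, which still suffices for $\n\geq6$. More seriously, in the shared-line case three lines $\ell_1,\ell_2,\ell_3$ with $\ell_1$ meeting both others span at most a \emph{solid} (dimension $3$, not $4$); with your value $4$ the count is $4+2\left(\left\lceil\frac{\n+1}{2}\right\rceil-3\right)=\n$, not $\n-1$, and yields no contradiction, whereas with the correct value $3$ one gets $\n-1<\n$ as needed. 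This corrected version is exactly the paper's case of ``a solid containing three lines of $\mathcal{L}'$'', which the paper also uses to absorb the configuration of four lines with $S_1=S_2$.
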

\begin{proof}
    Let $\n$ be odd.
    The statement is trivial when $\n=1$, hence we can assume that $\n\geq3$.
    Suppose, to the contrary, that two lines $\ell,\ell'\in\mathcal{L}$ span a plane $\pi$.
    Consider $m:=\frac{\n-3}{2}$ lines $\ell_1,\ell_2,\dots,\ell_m\in\mathcal{L}\setminus\set{\ell,\ell'}$.
    Then $\vspan{\ell,\ell',\ell_1,\ell_2,\dots,\ell_m}=\vspan{\pi,\ell_1,\ell_2,\dots,\ell_m}$ is a span of $\frac{\n+1}{2}$ lines of $\mathcal{L}$ equal to a space of dimension at most $\n-1$, contradicting Lemma \ref{Lm_SpanProperty}.
    
    Let $\n\geq4$ be even.
    Suppose, to the contrary, that there exist two pairs of intersecting lines with corresponding intersection points $S_1$ and $S_2$; define $\mathcal{L}'$ to be the set of these lines.
    We distinguish two cases depending on the size of $\mathcal{L}'$ and equality of the intersection points $S_1$ and $S_2$.
    If $|\mathcal{L}'|=3$ or if $S_1=S_2$, then there exists a solid $\sigma$ containing at least three lines of $\mathcal{L}'$.
    Consider $m:=\frac{\n-4}{2}$ lines $\ell_1,\ell_2,\dots,\ell_m\in\mathcal{L}\setminus\mathcal{L}'$.
    Then $\vspan{\sigma,\ell_1,\ell_2,\dots,\ell_m}$ is a space of dimension at most $\n-1$ that contains at least $\frac{\n+2}{2}$ lines of $\mathcal{L}$, contradicting Lemma \ref{Lm_SpanProperty}.
    If $|\mathcal{L}'|=4$ and $S_1\neq S_2$, then the line $s:=\vspan{S_1,S_2}$ is well-defined and intersects all four lines of $\mathcal{L}'$.
    Consider $m:=\frac{\n-2}{2}$ lines $\ell_1,\ell_2,\dots,\ell_m\in\mathcal{L}\setminus\mathcal{L}'$.
    As $\vspan{s,\ell_1,\ell_2,\dots,\ell_m}$ has dimension at most $\n-1$, we can choose a hyperplane $\Sigma$ through this space.
    For each $\ell\in\mathcal{L}\setminus\left(\mathcal{L}'\cup\set{\ell_1,\ell_2,\dots,\ell_m}\right)$, choose a point in the non-empty subspace $\Sigma\cap\ell$.
    This results in a choice of at most $\n+\frac{\n}{2}-(4+m)=\n-3$ points in $\Sigma$ spanning, together with the line $s$, a subspace $\Pi\subseteq\Sigma$ of dimension at most $\n-2$.
    Any $(\n-2)$-subspace of $\Sigma$ through $\Pi$ is an $(\n-2)$-subspace that intersects every line of $\mathcal{L}$, contradicting Theorem \ref{Res_CharHigPig}.
\end{proof}

\begin{prop}\label{Prop_OptimalHigPigN-2}
    Suppose that $\mathcal{K}$ is a higgledy-piggledy set of subspaces in $\pg{\n}{q}$ of dimension $\n-2$, with $|\mathcal{K}|=\n+\left\lfloor\frac{\n}{2}\right\rfloor\leq q$.
    Then every $\left\lceil\frac{\n+1}{2}\right\rceil$ elements of $\mathcal{K}$ have no point in common.
    Moreover, the following holds.
    \begin{enumerate}
        \item If $\n\geq3$ is odd, the elements of $\mathcal{K}$ pairwise intersect in an $(\n-4)$-subspace.
        \item If $\n\geq4$ is even, at most two elements of $\mathcal{K}$ intersect in an $(\n-3)$-subspace.
    \end{enumerate}
\end{prop}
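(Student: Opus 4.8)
The plan is to exploit the natural duality of $\pg{\n}{q}$ together with Proposition \ref{Prop_Duality}, reducing the statement entirely to the already-established Proposition \ref{Prop_OptimalHigPigLines}. First I would observe that subspaces of dimension $\n-2$ are precisely the dual objects of lines: if $\mathcal{K}$ is a higgledy-piggledy set of $(\n-2)$-subspaces of $\pg{\n}{q}$ with $|\mathcal{K}|=\n+\left\lfloor\frac{\n}{2}\right\rfloor\leq q$, then Proposition \ref{Prop_Duality} (applied with $k=\n-2$, so that $\n-k-1=1$) yields that $\mathcal{K}^\bot$ is a higgledy-piggledy line set of $\pg{\n}{q}$ of the same size $\n+\left\lfloor\frac{\n}{2}\right\rfloor\leq q$. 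Note that Proposition \ref{Prop_Duality} requires $|\mathcal{K}|\leq q$, which is part of our hypothesis.

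Next I would translate each incidence assertion through the duality, using the fact that a duality of $\pg{\n}{q}$ reverses inclusion and sends a subspace of dimension $d$ to one of dimension $\n-1-d$. A point (dimension $0$) lying in an $(\n-2)$-subspace dualises to a hyperplane containing a line; hence "$\left\lceil\frac{\n+1}{2}\right\rceil$ elements of $\mathcal{K}$ have a common point" is equivalent to "$\left\lceil\frac{\n+1}{2}\right\rceil$ lines of $\mathcal{K}^\bot$ lie in a common hyperplane," i.e.\ fail to span $\pg{\n}{q}$. By Lemma \ref{Lm_SpanProperty} applied to $\mathcal{K}^\bot$, every $\left\lceil\frac{\n+1}{2}\right\rceil$ lines of $\mathcal{K}^\bot$ do span the whole space, so no such common point exists; this gives the "moreover" clause. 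For the dichotomy, two $(\n-2)$-subspaces of $\pg{\n}{q}$ meet in a subspace of dimension at least $\n-4$, and generically in dimension $\n-3$; dually, two lines meet in a point (intersect) or not (are disjoint). One checks: two elements of $\mathcal{K}$ meeting in an $(\n-3)$-subspace corresponds exactly to the two dual lines in $\mathcal{K}^\bot$ intersecting in a point, while meeting in an $(\n-4)$-subspace corresponds to the dual lines being disjoint. Now invoke Proposition \ref{Prop_OptimalHigPigLines} for $\mathcal{K}^\bot$: if $\n$ is odd ($\n\geq3$) the lines of $\mathcal{K}^\bot$ are pairwise disjoint, so all pairs of elements of $\mathcal{K}$ meet in an $(\n-4)$-subspace, proving (1); if $\n\geq4$ is even, at most two lines of $\mathcal{K}^\bot$ intersect, so at most two elements of $\mathcal{K}$ meet in an $(\n-3)$-subspace (the rest meeting in an $(\n-4)$-subspace), proving (2).

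The only genuine subtlety — and the step I would write out most carefully — is the dictionary between the two types of pairwise intersection in dimension $\n-2$ and the two cases (intersecting / disjoint) for lines, making sure the dimension bookkeeping is correct: for two $(\n-2)$-subspaces $\kappa_1,\kappa_2$ one has $\dim\vspan{\kappa_1,\kappa_2}+\dim(\kappa_1\cap\kappa_2)=2(\n-2)$, and since $\vspan{\kappa_1,\kappa_2}$ has dimension $\n-1$ or $\n$, the intersection has dimension $\n-3$ or $\n-4$ respectively; dualising, $\vspan{\kappa_1,\kappa_2}$ a hyperplane means the two dual lines share a point, while $\vspan{\kappa_1,\kappa_2}=\pg{\n}{q}$ means they are disjoint. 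Everything else is a routine unwinding of the order-reversing property of the duality, so the proof is short once this correspondence is fixed.
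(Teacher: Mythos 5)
Your proposal is correct and is exactly the paper's argument: the paper's proof is the one-line observation that the result "follows immediately by combining Proposition \ref{Prop_Duality} with Lemma \ref{Lm_SpanProperty} and Proposition \ref{Prop_OptimalHigPigLines}," which is precisely the dualisation you carry out. Your explicit dimension bookkeeping (span of dimension $\n-1$ versus $\n$ corresponding to intersecting versus disjoint dual lines) is sound and simply makes the "immediately" precise.
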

\begin{proof}
    These results follow immediately by combining Proposition \ref{Prop_Duality} with Lemma \ref{Lm_SpanProperty} and Proposition \ref{Prop_OptimalHigPigLines}, respectively.
\end{proof}

Note that, alternatively, we could have dualised both statement and proof of Lemma \ref{Lm_SpanProperty} and Proposition \ref{Prop_OptimalHigPigLines} to obtain Proposition \ref{Prop_OptimalHigPigN-2}.

\bigskip
Propositions \ref{Prop_OptimalHigPigLines} and \ref{Prop_OptimalHigPigN-2} give us an understanding of the smallest possible set-ups for higgledy-piggledy sets of $k$-subspaces, $k\in\set{1,\n-2}$, of size at most $q$.
Therefore, we define the following accordingly.

\begin{df}
    Let $\n\geq3$ and $k\in\set{1,\n-2}$.
    Suppose that $\mathcal{K}$ is a higgledy-piggledy set of $k$-subspaces in $\pg{\n}{q}$ with $|\mathcal{K}|=\n+\left\lfloor\frac{\n}{2}\right\rfloor\leq q$.
    Then we will call $\mathcal{K}$ \emph{optimal} if
    \begin{enumerate}
        \item either $\n$ is odd, or
        \item $\n$ is even and two elements of $\mathcal{K}$ intersect in a $(k-1)$-subspace.
    \end{enumerate}
\end{df}

\begin{rmk}\label{Rmk_EvenToOddProjection}
    Let $\n\geq3$, $k\in\set{1,\n-2}$, and suppose there exists an optimal higgledy-piggledy set of $k$-subspaces for each odd, respectively even, $\n$.
    Then, with the aid of projection (Corollary \ref{Crl_Projection}), one can prove the existence of a higgledy-piggledy set of $k$-subspaces of size $\n+\left\lfloor\frac{\n}{2}\right\rfloor+1$ for each even, respectively odd, $\n$ (for $\n$ even, one simply has to choose the point of projection within the span of the two $k$-subspaces that maximally intersect).
    This reduces the search for small higgledy-piggledy sets of $k$-subspaces, $k\in\set{1,\n-2}$, to one parity class of $\n$.
\end{rmk}

\section{Higgledy-piggledy sets of \texorpdfstring{$\boldsymbol{\pgTitle{4}{q}}$}{PG(4,q)}}\label{Sect_HigPig4}

This section aims to prove the existence of an optimal higgledy-piggledy set of $k$-subspaces in $\pg{4}{q}$, $k\in\set{1,2}$.

\bigskip
We defined a blocking set of $\pg{2}{q}$ to be a point set meeting every line of the projective plane (see Definition \ref{Def_BlockingSet}).
In the literature, researchers also investigated point sets of $\pg{2}{q}$ that meet every line of a fixed line set.
In particular, blocking sets w.r.t.\ the external lines to an irreducible conic were considered.
In $2006$, Aguglia and Korchm\'{a}ros \cite{AgugliaKorchmaros} managed to characterise such blocking sets of minimal size in case $q$ is odd.
One year later, Giulietti \cite{Giulietti} tackled the case of $q$ even.
Although a full characterisation is known, for the purpose of this section, we only require the following result.

\begin{thm}[{\cite[Theorem $1.1$]{AgugliaKorchmaros,Giulietti}}]\label{Res_BlockingExternalLines}
    The minimum size of a blocking set w.r.t.\ external lines to an irreducible conic of $\pg{2}{q}$ is $q-1$.
\end{thm}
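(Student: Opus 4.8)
The plan is to prove the two inequalities separately. For the \textbf{upper bound} I would give an explicit construction. Since $\mathcal{C}$ carries $q+1\geq3$ points, fix two of them, $R$ and $R'$, and set $\ell:=\vspan{R,R'}$; as a line meets an irreducible conic in at most two points, $\ell\cap\mathcal{C}=\set{R,R'}$. I claim that the $q-1$ points of $\ell\setminus\set{R,R'}$ block every external line. Indeed, an external line $m$ is distinct from $\ell$, so $m\cap\ell$ is a single point $X$; were $X\in\set{R,R'}$, the line $m$ would contain a point of $\mathcal{C}$, contradicting externality, so $X\in\ell\setminus\set{R,R'}$. Hence a blocking set of size $q-1$ exists.

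For the \textbf{lower bound} I would argue by double counting. First, there are exactly $q+1$ tangent lines (one at each point of $\mathcal{C}$, and these are pairwise distinct) and exactly $\binom{q+1}{2}$ secant lines (since no three points of $\mathcal{C}$ are collinear, distinct pairs of points of $\mathcal{C}$ span distinct lines), so the number of external lines equals $q^2+q+1-(q+1)-\binom{q+1}{2}=\frac{q(q-1)}{2}$. Next, for a point $P$ let $e(P)$ denote the number of external lines through $P$. If $P\in\mathcal{C}$, then every line through $P$ meets $\mathcal{C}$, so $e(P)=0$. If $P\notin\mathcal{C}$, let $s$ and $t$ count the secant and tangent lines through $P$; partitioning the points of $\mathcal{C}$ according to which line through $P$ contains them yields $2s+t=q+1$, while counting all lines through $P$ gives $s+t+e(P)=q+1$, and subtracting these two relations gives $e(P)=s=\frac{q+1-t}{2}\leq\frac{q+1}{2}$. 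Consequently, if $\mathcal{B}$ is any blocking set with respect to the external lines, then (as every external line meets $\mathcal{B}$) we get $\frac{q(q-1)}{2}\leq\sum_{P\in\mathcal{B}}e(P)\leq|\mathcal{B}|\cdot\frac{q+1}{2}$, whence $|\mathcal{B}|\geq\frac{q(q-1)}{q+1}=q-2+\frac{2}{q+1}$ and therefore $|\mathcal{B}|\geq q-1$.

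Combining the two bounds shows that the minimum size is exactly $q-1$. The argument is elementary and uniform in $q$; the only points requiring a little attention are the harmless rounding step at the end and the verification that the tangent and secant counts are as stated (for $q$ even this is cleanest once one recalls that every tangent passes through the nucleus of $\mathcal{C}$, so there are still $q+1$ distinct tangents and no secant through the nucleus). The substantially harder content of the cited papers — a complete \emph{classification} of the minimum-size blocking sets, which is more intricate in the even case — is not needed here, and I would not attempt to reproduce it.
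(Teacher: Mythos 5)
Your proposal is correct, but note that the paper offers no proof of this statement at all: it is imported verbatim from Aguglia--Korchm\'aros ($q$ odd) and Giulietti ($q$ even), so there is nothing internal to compare against. Judged on its own merits, your self-contained argument works. The upper bound via a secant line $\ell=\vspan{R,R'}$ minus its two conic points is sound: an external line is distinct from $\ell$, meets it in exactly one point, and that point cannot be $R$ or $R'$. The lower bound is also sound: the external-line count $\frac{q(q-1)}{2}$ is right, the relations $s+t+e(P)=q+1$ and $2s+t=q+1$ give $e(P)=s\leq\frac{q+1}{2}$ for every point off the conic (and $e(P)=0$ on the conic, including the degenerate nucleus case $t=q+1$, $e=0$ for $q$ even), and the incidence count $\frac{q(q-1)}{2}\leq|\mathcal{B}|\cdot\frac{q+1}{2}$ forces $|\mathcal{B}|>q-2$, hence $|\mathcal{B}|\geq q-1$. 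You are also right that the genuinely hard content of the cited papers is the classification of the minimum-size examples, which this paper never uses: only the numerical value $q-1$ is invoked (in the proof of Theorem \ref{Thm_SixHigPigLines4}, to find a line of $\delta$ avoiding a set of at most $2q-2$ points, $q+1$ of which form a conic). So your elementary double count fully suffices for the role the theorem plays here.
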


Throughout this section, keep the following base configuration in mind.

\begin{conf}\label{Conf_FourLines}
    Suppose $\Sigma_1$, $\Sigma_2$ and $\Sigma_3$ are solids of $\pg{4}{q}$ such that their intersection $m:=\Sigma_1\cap\Sigma_2\cap\Sigma_3$ is a line; let $M_1$ and $M_2$ be two distinct points on $m$.
    Define, for every $i,j\in\set{1,2,3}$, $i<j$, the plane $\pi_{ij}:=\Sigma_i\cap\Sigma_j$ and let $P_{ij}\in\pi_{ij}\setminus m$ be a point.
    Consider, for each $i\in\set{1,2}$, the lines $\ell_{i2}:=\vspan{P_{12},P_{i3}}$ and $\ell_{i1}$ in $\Sigma_i$ through $M_i$ not intersecting $\ell_{i2}$ and not contained in $\pi_{12}$ or $\pi_{i3}$.
    Define the line $s:=\vspan{P_{13},P_{23}}$, the plane $\beta:=\vspan{\ell_{11},\ell_{21}}\cap\Sigma_3$ and their intersection point $S:=s\cap\beta$.
    To conclude, consider the following projections:
    \begin{enumerate}
        \item the line $\ell_{11}':=\vspan{P_{13},\ell_{11}}\cap\pi_{12}$, and
        \item the line $\ell_{i1}'':=\vspan{P_{12},\ell_{i1}}\cap\pi_{i3}$ for each $i\in\set{1,2}$.
    \end{enumerate}
    See Figure \ref{Fig_HigPigDim4} for a visualisation of this configuration, where the lines $\ell_{11}$, $\ell_{12}$, $\ell_{21}$ and $\ell_{22}$ (and $\ell_{31}$, see Configuration \ref{Conf_FiveLines}) are drawn in \textcolor{red}{red}, while their projections as defined above are shown in \textcolor{orange}{orange}.
\end{conf}

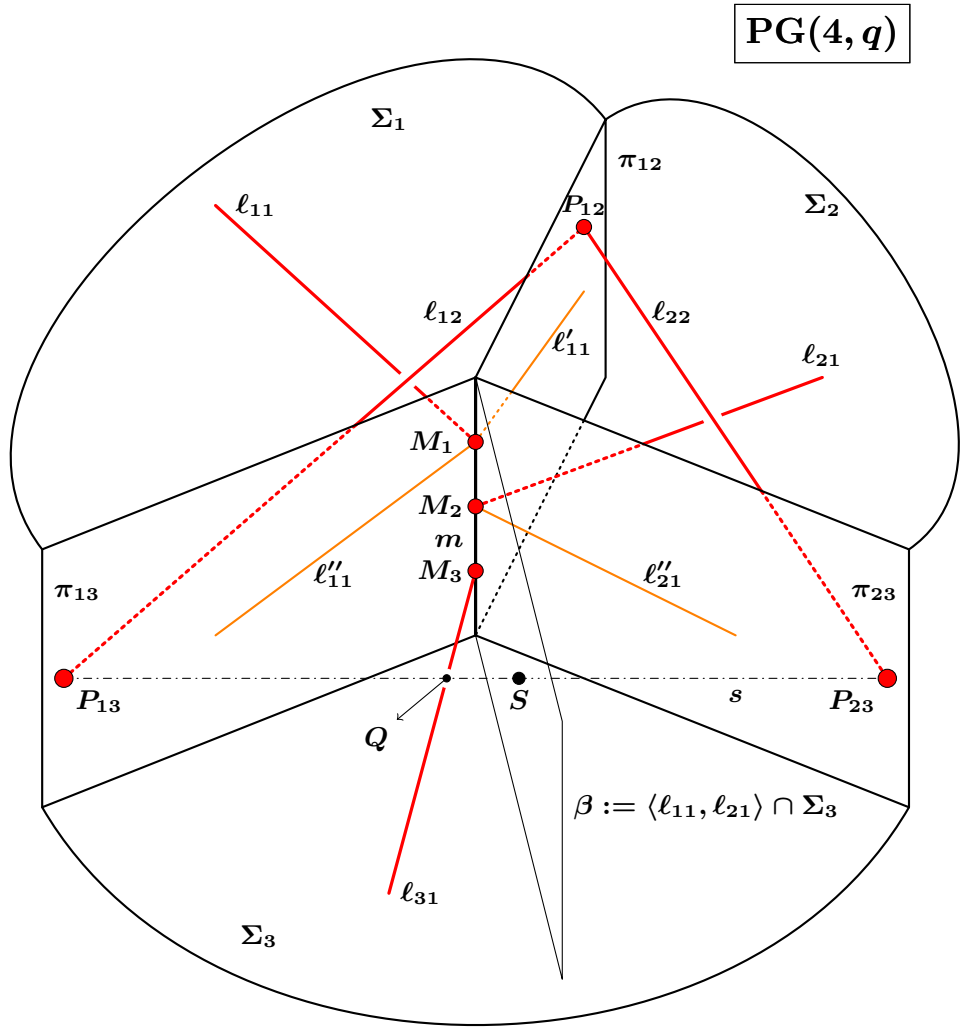
\begin{figure}
    \begin{center}\begin{tikzpicture}[scale=1.14, every node/.style={scale=1.14}]
        \path[name path=l11] (-45/79,219/79) -- (-3,5);
        \path[name path=l12] (-105/76,93/38) -- (7/12,25/6);
        \path[name path=l21] (60/31,69/31) -- (4,3);
        \path[name path=l22] (1.25,4.75) -- (145/44,37/22);
        \path[name path=l31] (0,0.75) -- (-1,-3);
        \path[name path=s] (-4.75,-0.5) -- (0.5,-0.5);
        
        \node[draw,fill=none] at (4,7) {\large\textbf{PG}$\boldsymbol{(4,q)}$};
		\node[draw,fill=none] at (4,7) {\large\textbf{PG}$\boldsymbol{(4,q)}$}; 
		
		\draw[thick, line join=round, line cap=round] (1.5,6) to [out=180,in=90,bend right=90] (-5,1);
        \node[draw=none, fill=none] at (-1,6) {\footnotesize$\boldsymbol{\Sigma_1}$};
        \draw[thick, line join=round, line cap=round] (1.5,6) to [out=0,in=90,bend left=90] (5,1);
        \node[draw=none, fill=none] at (4,5) {\footnotesize$\boldsymbol{\Sigma_2}$};
        \draw[thick, line join=round, line cap=round] (-5,-2) to [out=270,in=270,bend right=60] (5,-2);
        \node[draw=none, fill=none] at (-2.5,-3.5) {\footnotesize$\boldsymbol{\Sigma_3}$};
        
        \draw[very thick, dotted, line join=round, line cap=round, color=red] (0,2.25) -- (-45/79,219/79);
        \draw[very thick, line join=round, line cap=round, color=red] (-45/79,219/79) -- (-3,5);
        \node[draw=none, fill=none, anchor=west] at (-2.9,5) {\footnotesize$\boldsymbol{\ell_{11}}$};
        \draw[color=white, fill=white, name intersections={of=l11 and l12}] (intersection-1) circle (3pt);
        
        \draw[very thick, dotted, line join=round, line cap=round, color=red] (-4.75,-0.5) -- (-105/76,93/38);
        \draw[very thick, line join=round, line cap=round, color=red] (-105/76,93/38) -- (7/12,25/6);
        \draw[very thick, dotted, line join=round, line cap=round, color=red] (7/12,25/6) -- (1.25,4.75);
        \node[draw=none, fill=none, anchor=south east] at (0,3.5) {\footnotesize$\boldsymbol{\ell_{12}}$};
        
        \draw[thick, line join=round, line cap=round] (0,0) -- (0,3) -- (1.5,6) -- (1.5,3) -- (1.25,2.5);
        \draw[thick, dotted, line join=round, line cap=round] (1.25,2.5) -- (0,0);
        \node[draw=none, fill=none, anchor=west] at (1.5,5.5) {\footnotesize$\boldsymbol{\pi_{12}}$};
        \draw[thick, dotted, line join=round, line cap=round, color=orange] (0,2.25) -- (5/12,17/6);
        \draw[thick, line join=round, line cap=round, color=orange] (5/12,17/6) -- (1.25,4);
        \node[draw=none, fill=none, anchor=north west] at (0.75,3.7) {\footnotesize$\boldsymbol{\ell_{11}'}$};
        
        \draw[very thick, dotted, line join=round, line cap=round, red] (0,1.5) -- (60/31,69/31);
        \draw[very thick, line join=round, line cap=round, color=red] (60/31,69/31) -- (4,3);
        \node[draw=none, fill=none, anchor=south] at (4,3) {\footnotesize$\boldsymbol{\ell_{21}}$};
        \draw[color=white, fill=white, name intersections={of=l21 and l22}] (intersection-1) circle (3pt);
        
        \draw[very thick, line join=round, line cap=round, color=red] (1.25,4.75) -- (145/44,37/22);
        \draw[very thick, dotted, line join=round, line cap=round, color=red] (145/44,37/22) -- (4.75,-0.5);
        \node[draw=none, fill=none, anchor=south west] at (1.9,3.5) {\footnotesize$\boldsymbol{\ell_{22}}$};
        
        \draw[fill=red] (1.25,4.75) circle (2.5pt);
        \node[draw=none, fill=none, anchor=south] at (1.25,4.75) {\scriptsize$\boldsymbol{P_{12}}$};
        
        \draw[thick, line join=round, line cap=round] (0,0) -- (0,3) -- (-5,1) -- (-5,-2) -- cycle;
        \node[draw=none, fill=none, anchor=west] at (-5,0.5) {\footnotesize$\boldsymbol{\pi_{13}}$};
        \draw[thick, line join=round, line cap=round, color=orange] (0,2.25) -- (-3,0);
        \node[draw=none, fill=none, anchor=north west] at (-2,1) {\footnotesize$\boldsymbol{\ell_{11}''}$};
        
        \draw[thick, line join=round, line cap=round] (0,0) -- (0,3) -- (5,1) -- (5,-2) -- cycle;
        \node[draw=none, fill=none, anchor=east] at (5,0.5) {\footnotesize$\boldsymbol{\pi_{23}}$};
        \draw[thick, line join=round, line cap=round, color=orange] (0,1.5) -- (3,0);
        \node[draw=none, fill=none, anchor=north west] at (1.8,1) {\footnotesize$\boldsymbol{\ell_{21}''}$};
        
        \draw[very thick, line join=round, line cap=round] (0,0) -- (0,3);
        \node[draw=none, fill=none, anchor=east] at (0,1.1) {\footnotesize$\boldsymbol{m}$};
        
        \draw[very thick, line join=round, line cap=round, color=red] (0,0.75) -- (-1,-3);
        \node[draw=none, fill=none, anchor=west] at (-1,-3) {\footnotesize$\boldsymbol{\ell_{31}}$};
        \draw[color=white, fill=white, name intersections={of=l31 and s}] (intersection-1) circle (3pt);
        
        \draw[dashdotted, line join=round, line cap=round] (-4.75,-0.5) -- (0.5,-0.5);
        \draw[dotted, line join=round, line cap=round] (0.5,-0.5) -- (7/8,-0.5);
        \draw[dashdotted, line join=round, line cap=round] (7/8,-0.5) -- (4.75,-0.5);
        \node[draw=none, fill=none, anchor=north] at (3,-0.5) {\footnotesize$\boldsymbol{s}$};
        
        \draw[line join=round, line cap=round] (0,0) -- (0,3) -- (1,-1) -- (1,-4) -- cycle;
        \node[draw=none, fill=none, anchor=west] at (1,-2) {\footnotesize$\boldsymbol{\beta:=\vspan{\ell_{11},\ell_{21}}\cap\Sigma_3}$};
        \draw[fill=black, name intersections={of=l31 and s}] (intersection-1) circle (1.2pt);
        \draw[->, line join=round, line cap=round, name intersections={of=l31 and s}] (intersection-1) -- +(220:0.75);
        \node[draw=none, fill=none, name intersections={of=l31 and s}] at ($(intersection-1)+(220:1)$) {\footnotesize$\boldsymbol{Q}$};
        \draw[fill=black] (0.5,-0.5) circle (2pt);
        \node[draw=none, fill=none, anchor=north] at (0.5,-0.5) {\footnotesize$\boldsymbol{S}$};
        
        \draw[fill=red] (0,2.25) circle (2.5pt);
        \node[draw=none, fill=none, anchor=east] at (-0.1,2.25) {\footnotesize$\boldsymbol{M_1}$};
        \draw[fill=red] (0,1.5) circle (2.5pt);
        \node[draw=none, fill=none, anchor=east] at (0,1.5) {\footnotesize$\boldsymbol{M_2}$};
        \draw[fill=red] (0,0.75) circle (2.5pt);
        \node[draw=none, fill=none, anchor=east] at (0,0.75) {\footnotesize$\boldsymbol{M_3}$};
        
        \draw[fill=red] (-4.75,-0.5) circle (3pt);
        \node[draw=none, fill=none, anchor=north west] at (-4.75,-0.5) {\footnotesize$\boldsymbol{P_{13}}$};
        \draw[fill=red] (4.75,-0.5) circle (3pt);
        \node[draw=none, fill=none, anchor=north east] at (4.75,-0.5) {\footnotesize$\boldsymbol{P_{23}}$};
    \end{tikzpicture}\end{center}
    \caption{The set-up as described in Configuration \ref{Conf_FourLines} and Configuration \ref{Conf_FiveLines}.}
    \label{Fig_HigPigDim4}
\end{figure}

\begin{nt}\label{Nt_Planes4}
    Denote with $\Pi^{(4)}$ the set of all planes of $\pg{4}{q}$ that intersect each of the lines $\ell_{11}$, $\ell_{12}$, $\ell_{21}$ and $\ell_{22}$.
\end{nt}

\begin{lm}\label{Lm_CharPlanesPi4}
    Each plane of $\Pi^{(4)}$ either
    \begin{enumerate}
        \item intersects $\Sigma_3$ in a line of $\pi_{13}$ through $M_2$ not equal to $m$,
        \item intersects $\Sigma_3$ in a line of $\pi_{23}$ through $M_1$ not equal to $m$,
        \item is equal to $\pi_{12}$, or
        \item intersects $\pi_{12}$ in precisely one point not contained in $\vspan{M_i,P_{12}}\setminus\set{M_i,P_{12}}$, $i\in\{1,2\}$.
    \end{enumerate}
    Moreover, for every point $A\in\pi_{12}\setminus\big(\vspan{M_1,P_{12}}\cup\vspan{M_2,P_{12}}\big)$, there exists a unique plane of $\Pi^{(4)}$ that intersects $\pi_{12}$ in precisely the point $A$.
\end{lm}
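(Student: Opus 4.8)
The plan is to analyse a plane $\xi \in \Pi^{(4)}$ by looking at its intersection with the solids $\Sigma_1$, $\Sigma_2$ and $\Sigma_3$, and in particular with the plane $\pi_{12} = \Sigma_1 \cap \Sigma_2$. Since $\xi$ meets $\ell_{11}, \ell_{12} \subseteq \Sigma_1$, the plane $\xi \cap \Sigma_1$ is non-empty; dually $\xi$ meets $\Sigma_2$ and $\Sigma_3$. A dimension count in $\pg{4}{q}$ shows that a plane $\xi$ meets every solid, and meets $\pi_{12}$ (a plane) in at least a point. First I would handle the degenerate case where $\xi \subseteq \Sigma_i$ for some $i$, or where $\dim(\xi \cap \pi_{12}) \geq 1$, and show these force case $(3)$, namely $\xi = \pi_{12}$: if $\xi \cap \pi_{12}$ is a line then, using that $\xi$ must still meet $\ell_{13}$-type lines in the other solids together with the fact that $\ell_{i1}, \ell_{i2}$ are in "general position" inside $\Sigma_i$ (concretely, that $\ell_{i1}$ meets $m$ only at $M_i$, is not in $\pi_{12}$ or $\pi_{i3}$, and is disjoint from $\ell_{i2}$), one deduces that $\xi$ is forced to contain enough of $\Sigma_1$ and $\Sigma_2$ to equal $\pi_{12}$.

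Next, assuming $\xi \neq \pi_{12}$, so that $\xi \cap \pi_{12}$ is a single point $A$, I would project from $P_{12}$ and from $P_{13}$ to relate the condition "$\xi$ meets $\ell_{11}$ and $\ell_{12}$" to the projected lines $\ell_{11}'$, $\ell_{11}''$ etc.\ living in $\pi_{12}$ and $\pi_{13}$. The key observation is that $\ell_{12} = \vspan{P_{12}, P_{13}}$ passes through $P_{12}$, so if $\xi$ meets $\ell_{12}$ in a point different from $P_{12}$ then $\vspan{P_{12}, \xi}$ is a solid meeting $\pi_{12}$ and $\pi_{13}$ in a controlled way; combined with $\xi$ meeting $\ell_{11}$, this pins down where $\xi$ can cross $\Sigma_3$. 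I would then split according to whether $\xi$ passes through $P_{12}$ (or $P_{13}$, $P_{23}$) or not. When $\xi$ avoids these special points, the projections of $\ell_{11}$ and $\ell_{21}$ into $\pi_{12}$ constrain $A = \xi \cap \pi_{12}$: one shows $A$ cannot lie on $\vspan{M_1, P_{12}}$ or $\vspan{M_2, P_{12}}$ (these are exactly the lines where the projection degenerates), giving the exclusion in case $(4)$. The cases where $\xi$ does pass through one of the $P$'s are precisely what produce cases $(1)$ and $(2)$: e.g.\ if $\xi$ meets $\ell_{12}$ at $P_{12}$ itself, then $\xi$ lies in $\vspan{P_{12}, \ell_{11}, \ell_{12}}$-type configurations forcing $\xi \cap \Sigma_3$ to be a line of $\pi_{23}$ through $M_1$ (and symmetrically for $(2)$), using that $\ell_{11}$ meets $m$ at $M_1$.

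For the "moreover" part I would argue by a direct existence-and-uniqueness count: fix $A \in \pi_{12} \setminus (\vspan{M_1,P_{12}} \cup \vspan{M_2,P_{12}})$. The plane $\xi$ must meet $\ell_{11}$, $\ell_{12}$, $\ell_{21}$, $\ell_{22}$; meeting $\ell_{12}$ and containing $A$ means $\xi \subseteq \vspan{A, \ell_{12}}$, which is a solid (since $A \notin \ell_{12}$, as $A \notin \vspan{M_1,P_{12}}$ rules out $A = P_{12}$ being on both). Inside that solid, the conditions of meeting $\ell_{11}$, $\ell_{21}$, $\ell_{22}$ are three linear conditions on the plane through $A$ and a line of $\ell_{12}$; I would show these are independent and consistent, yielding a unique $\xi$. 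Alternatively, and probably cleaner, one parametrises: the requirement "$\xi$ meets $\ell_{11}$" together with "$A \in \xi$" forces $\xi \subseteq \vspan{A, \ell_{11}}$ as well, so $\xi \subseteq \vspan{A,\ell_{11}} \cap \vspan{A,\ell_{12}}$, and since $\ell_{11}, \ell_{12}$ span $\Sigma_1$ this intersection is exactly a plane whenever $A \notin \Sigma_1$, and one checks it also automatically meets $\ell_{21}$ and $\ell_{22}$ by the symmetry of the configuration (or, if not automatic, the two extra conditions cut it down but the genericity of $A$ guarantees a solution). The main obstacle I anticipate is the bookkeeping in this last step: verifying that the linear conditions coming from the four lines are consistent and cut out exactly one plane requires using all the non-degeneracy hypotheses in Configuration \ref{Conf_FourLines} simultaneously (the $\ell_{i1}$ avoiding $\pi_{12}$ and $\pi_{i3}$, being disjoint from $\ell_{i2}$, and meeting $m$ only at $M_i$), and keeping track of which special sublines of $\pi_{12}$ get excluded. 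Choosing good coordinates on $\pg{4}{q}$ adapted to $m$, $M_1$, $M_2$, $P_{12}$, $P_{13}$, $P_{23}$ should make the exclusion locus $\vspan{M_i,P_{12}}$ transparent and reduce the verification to a short computation.
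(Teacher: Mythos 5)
Your case analysis starts from a false dichotomy, and this is a genuine gap rather than a bookkeeping issue. You claim that $\xi\subseteq\Sigma_i$ or $\dim(\xi\cap\pi_{12})\geq1$ forces $\xi=\pi_{12}$. It does not: a plane $\xi\subseteq\Sigma_1$ belonging to $\Pi^{(4)}$ must contain $M_2$ (the only point of $\ell_{21}$ in $\Sigma_1$) and $P_{12}$ (the only point of $\ell_{22}$ in $\Sigma_1$), and it automatically meets $\ell_{11}$ and $\ell_{12}$ because those lines lie in the solid $\Sigma_1$; hence each of the $q$ planes of $\Sigma_1$ through $\vspan{M_2,P_{12}}$ other than $\pi_{12}$ belongs to $\Pi^{(4)}$, meets $\pi_{12}$ in a line, and is not $\pi_{12}$. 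These planes (and their analogues in $\Sigma_2$) are exactly the planes of items $1.$ and $2.$ of the lemma, which your argument would render vacuous. Relatedly, your identification of items $1.$ and $2.$ with ``planes passing through $P_{12}$, $P_{13}$ or $P_{23}$'' is also incorrect: the $q^2$ planes of $\Pi^{(4)}$ meeting $\pi_{12}$ in exactly the point $P_{12}$ fall under item $4.$ and need not cut $\Sigma_3$ inside $\pi_{13}$ or $\pi_{23}$ at all. The dichotomy that actually works is: either $\xi\subseteq\Sigma_1$ or $\xi\subseteq\Sigma_2$ (yielding items $1.$--$3.$, according to whether $\xi\cap\Sigma_3$ equals $m$ or not), or neither, in which case $a_i:=\xi\cap\Sigma_i$ is a line for $i=1,2$ and $\xi\cap\pi_{12}$ cannot be a line (it would have to contain the non-collinear points $M_1$, $M_2$, $P_{12}$), so it is a single point $A=a_1\cap a_2$.

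The ``moreover'' part also rests on incorrect containments. $\vspan{A,\ell_{12}}$ is a plane, not a solid; a plane containing $A$ and meeting $\ell_{12}$ need not lie in $\vspan{A,\ell_{12}}$; and your caveat ``whenever $A\notin\Sigma_1$'' is never satisfied, since $A\in\pi_{12}\subseteq\Sigma_1$. What actually pins $\xi$ down is that $a_i=\xi\cap\Sigma_i$ is a line through $A$ meeting the two \emph{disjoint} lines $\ell_{i1}$ and $\ell_{i2}$ of the solid $\Sigma_i$; for $A$ off both of these lines there is a unique such transversal, and that transversal lies in $\pi_{12}$ precisely when $A\in\vspan{M_i,P_{12}}$ (this line being itself a transversal). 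This simultaneously gives the exclusion in item $4.$ and, for $A\notin\vspan{M_1,P_{12}}\cup\vspan{M_2,P_{12}}$, the existence and uniqueness of the plane $\vspan{a_1,a_2}$. Your instinct to exploit transversals and the projected lines is sound, but the argument needs to be rebuilt around the containment-in-$\Sigma_1$-or-$\Sigma_2$ dichotomy rather than the one you propose.
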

\begin{proof}
    Consider a plane $\alpha\in\Pi^{(4)}$ and suppose that $\alpha$ is contained in $\Sigma_i$ for a certain $i\in\set{1,2}$.
    Then $\alpha$ has to contain the points $M_{3-i}$ and $P_{12}$ to be able to intersect the lines $\ell_{(3-i)1}$ and $\ell_{(3-i)2}$, respectively, and hence has to intersect $\Sigma_3$ in a line $r\subseteq\pi_{i3}$ through $M_{3-i}$.
    If $r$ differs from $m$, then either property $1.$ or property $2.$ is true.
    If $r=m$, then property $3.$ holds.
    
    Now suppose that $\alpha$ is not contained in $\Sigma_1$ or $\Sigma_2$, then $\alpha$ intersects these solids in lines $a_1$ and $a_2$, respectively.
    If $\alpha$ intersects $\pi_{12}$ in a line, then this line has to be equal to $a_1=a_2$ which consequently has to contain the non-collinear points $M_1$, $M_2$ and $P_{12}$ to be able to intersect the lines $\ell_{11}$, $\ell_{21}$, $\ell_{12}$ and $\ell_{22}$, a contradiction.
    Hence, $\alpha$ intersects $\pi_{12}$ in precisely a point $A=a_1\cap a_2$ and thus $\alpha=\vspan{a_1,a_2}$.
    It is clear that, for $i\in\set{1,2}$, the line $a_i$ has to intersect the disjoint lines $\ell_{i1}$ and $\ell_{i2}$.
    If $A\notin\set{M_1,M_2,P_{12}}$, then there exists a unique line through $A$ intersecting both these lines, which hence has to be equal to $a_i$.
    This means that $A$ cannot be contained in $\vspan{M_i,P_{12}}$, as else $a_i=\vspan{M_i,P_{12}}\subseteq\pi_{12}$, and that $\alpha$ is uniquely defined by its intersection point $A$ with $\pi_{12}$, finishing the proof.
\end{proof}

Given the above lemma, we can now introduce the following notation.

\begin{nt}
    For every point $A\in\pi_{12}\setminus\big(\vspan{M_1,P_{12}}\cup\vspan{M_2,P_{12}}\big)$, let $\alpha^{(A)}$ be the unique plane of $\Pi^{(4)}$ intersecting $\pi_{12}$ in precisely the point $A$.
    For every $i\in\set{1,2,3}$, define the line $a_i^{(A)}:=\alpha^{(A)}\cap\Sigma_i$.
\end{nt}

\begin{lm}\label{Lm_CoplanarAndConcurrent}
    Let $\mathfrak{a}$ be a line in $\pi_{12}$ through $P_{12}$, not equal to $\vspan{M_1,P_{12}}$ or $\vspan{M_2,P_{12}}$.
    Then $\sett{a_3^{(A)}}{A\in\mathfrak{a}\setminus\set{P_{12}}}$ is a set of $q$ lines lying in a plane of $\Sigma_3$ through $s$ and going through a fixed point of $\beta$.
\end{lm}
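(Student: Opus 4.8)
The plan is to understand the map $A \mapsto a_3^{(A)}$ explicitly, and to argue via a projection from the point $P_{12}$. First I would fix the line $\mathfrak{a}$ through $P_{12}$ in $\pi_{12}$ and note that, by Lemma~\ref{Lm_CharPlanesPi4}, for each $A\in\mathfrak{a}\setminus\set{P_{12}}$ the plane $\alpha^{(A)}$ is well-defined and meets each solid $\Sigma_i$ in a line $a_i^{(A)}$ with $a_1^{(A)}\cap a_2^{(A)} = \set{A}$; in particular $P_{12}\notin a_i^{(A)}$ for $i\in\set{1,2}$. The key structural observation is that the union $\bigcup_{A}\alpha^{(A)}$ is contained in a single hyperplane-like object: since every $\alpha^{(A)}$ contains the point $A\in\mathfrak{a}$ and meets the disjoint lines $\ell_{i1},\ell_{i2}$ inside $\Sigma_i$, I would show that $\Lambda:=\vspan{\mathfrak{a},\ell_{12}}$ (a solid, as $\ell_{12}=\vspan{P_{12},P_{13}}$ passes through $P_{12}\in\mathfrak{a}$) controls the $\Sigma_1$-parts, and symmetrically for $\Sigma_2$; but the cleaner route is to project everything from $P_{12}$.

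Concretely: consider the projection $\rho$ from the point $P_{12}$ onto the solid $\Sigma_3$ (note $P_{12}\notin\Sigma_3$, since $P_{12}\in\pi_{12}\setminus m$ and $\Sigma_1\cap\Sigma_2\cap\Sigma_3 = m$, while $\pi_{12}\cap\Sigma_3 = m$ — here I would insert the one-line check that $\pi_{12}\cap\Sigma_3=m$). Recall from Configuration~\ref{Conf_FourLines} that $\ell_{i2}=\vspan{P_{12},P_{i3}}$, so these lines pass through $P_{12}$; hence any line $a_i^{(A)}$ meeting $\ell_{i2}$ has its projection $\rho(a_i^{(A)})$ passing through $P_{i3}=\rho(\ell_{i2}\setminus\set{P_{12}})$. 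Since $\alpha^{(A)} = \vspan{a_1^{(A)},a_2^{(A)}}$ contains $P_{12}$ only if... — actually $P_{12}\in\pi_{12}\cap\alpha^{(A)}=\set{A}$ forces $P_{12}\notin\alpha^{(A)}$ as $A\ne P_{12}$, so $\rho$ is a genuine (injective on $\alpha^{(A)}$) projection of the plane $\alpha^{(A)}$ onto a plane $\rho(\alpha^{(A)})\subseteq\Sigma_3$ which contains both $P_{13}$ and $P_{23}$, hence contains $s=\vspan{P_{13},P_{23}}$. Moreover $a_3^{(A)}=\alpha^{(A)}\cap\Sigma_3$ is fixed by $\rho$, so $a_3^{(A)}\subseteq\rho(\alpha^{(A)})$. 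Next I would show $\rho(\alpha^{(A)})$ is the \emph{same} plane $\pi^\ast$ of $\Sigma_3$ for all $A\in\mathfrak{a}\setminus\set{P_{12}}$: indeed $\rho(\alpha^{(A)})=\rho(\vspan{\mathfrak{a},\alpha^{(A)}})\cap\Sigma_3 = \vspan{P_{12},\mathfrak{a},\alpha^{(A)}}\cap\Sigma_3$, and $\vspan{P_{12},\mathfrak{a}}=\vspan{\mathfrak a}$ (a line through $P_{12}$), while $\vspan{\mathfrak a, \alpha^{(A)}}$ is at most a solid containing the fixed lines $\ell_{11}',\ell_{i1}''$-type data; the point is that $\vspan{P_{12},\alpha^{(A)}}$ has dimension $3$ and meets $\Sigma_3$ in a plane through $P_{13},P_{23}$, i.e.\ through $s$, and this solid $\vspan{P_{12},\alpha^{(A)}}$ does not depend on $A$ because it equals $\vspan{P_{12},a_1^{(A)},a_2^{(A)}} = \vspan{\ell_{12},\mathfrak a}$-span type object — here I would pin down that $\vspan{P_{12},a_1^{(A)}}$ is the plane $\vspan{\ell_{12},A}$ projecting onto $\vspan{P_{13},\rho(A)}$, and as $A$ ranges over $\mathfrak a$ these all lie in the fixed solid $\vspan{P_{12},P_{13},\mathfrak a}\cdot$-completion. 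So all $a_3^{(A)}$ lie in one plane $\pi^\ast$ of $\Sigma_3$ through $s$.

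It remains to produce the common point on $\beta$. Since $a_3^{(A)}\subseteq\alpha^{(A)}=\vspan{a_1^{(A)},a_2^{(A)}}$ and $a_{i1}$-information: $\alpha^{(A)}$ meets $\ell_{i1}$ in a point, so the line $a_3^{(A)}$ meets $\vspan{\ell_{11},\ell_{21}}$; but by definition $\beta = \vspan{\ell_{11},\ell_{21}}\cap\Sigma_3$, and $a_3^{(A)}\subseteq\Sigma_3$, so $a_3^{(A)}$ meets $\beta$ in at least a point $T_A$. Finally I would show $T_A$ is independent of $A$: the plane $\vspan{\ell_{11},\ell_{21}}$ meets $\Sigma_3$ exactly in $\beta$, and it meets the fixed plane $\pi^\ast$ in a single point (two planes of a solid meeting in a point generically — I would verify $\pi^\ast\ne\beta$ using that $s\subseteq\pi^\ast$ while $s\cap\beta=\set S$ is a single point, so indeed $\pi^\ast\cap\beta$ is a point), call it $T$. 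Since every $a_3^{(A)}$ lies in $\pi^\ast$ and meets $\beta$, it meets $\beta\cap\pi^\ast=\set T$, so $T_A=T$ for all $A$. Together with the $q$-many distinct values of $A$ (and the fact that $A\mapsto a_3^{(A)}$ is injective, which follows from $\alpha^{(A)}\cap\pi_{12}=\set A$ being injective in $A$) this gives exactly $q$ lines in the plane $\pi^\ast$ through $s$, all passing through the fixed point $T\in\beta$, as claimed.

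The main obstacle I anticipate is the bookkeeping showing that the solid $\vspan{P_{12},\alpha^{(A)}}$ — equivalently the plane $\pi^\ast=\rho(\alpha^{(A)})$ — genuinely does not move as $A$ varies along $\mathfrak a$; this requires carefully tracking that the projection from $P_{12}$ collapses the whole pencil $\set{\alpha^{(A)}}$ onto a single plane, which hinges on $\ell_{12}$ and $\ell_{22}$ both passing through $P_{12}$ together with $\mathfrak a$ passing through $P_{12}$. Once that is established, the "concurrent through a point of $\beta$" part is a short dimension count. A secondary subtlety is ruling out degeneracies ($\pi^\ast = \beta$, or $a_3^{(A)}$ lying in $\Sigma_i$), which I would dispatch using the non-degeneracy clauses in Configuration~\ref{Conf_FourLines} (the lines $\ell_{i1}$ are not contained in $\pi_{12}$ or $\pi_{i3}$, and $\ell_{i1}$ does not meet $\ell_{i2}$).
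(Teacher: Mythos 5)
Your coplanarity argument is sound and is essentially the paper's: projecting from $P_{12}$ amounts to observing that $\vspan{P_{12},a_i^{(A)}}=\vspan{\mathfrak{a},\ell_{i2}}$ for $i\in\set{1,2}$ (since $a_i^{(A)}$ meets both $\mathfrak{a}$ and $\ell_{i2}$ away from $P_{12}$), whence $\vspan{P_{12},\alpha^{(A)}}=\vspan{\mathfrak{a},\ell_{12},\ell_{22}}$ is a fixed solid and all $a_3^{(A)}$ lie in its intersection with $\Sigma_3$, a plane through $P_{13}$ and $P_{23}$ and hence through $s$.

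The concurrency step, however, has a genuine gap. You claim that the two distinct planes $\pi^\ast$ and $\beta$ of the solid $\Sigma_3$ meet in a single point $T$. This is false: two distinct planes of a $3$-dimensional projective space always meet in a line (Grassmann: $\dim(\pi^\ast\cap\beta)\geq 2+2-3=1$). So $\pi^\ast\cap\beta$ is a line through $S$, and the statement ``every $a_3^{(A)}$ lies in $\pi^\ast$ and meets $\beta$'' only says that each $a_3^{(A)}$ meets this line somewhere -- which is automatic for any line of $\pi^\ast$ and carries no information. Concurrency does not follow. The missing idea is to exploit the fixed planes $\vspan{\mathfrak{a},\ell_{i2}}$ once more: since $a_i^{(A)}\subseteq\vspan{\mathfrak{a},\ell_{i2}}$ and $a_i^{(A)}$ must meet $\ell_{i1}$, the line $a_i^{(A)}$ passes through the \emph{fixed} point $Q_i:=\vspan{\mathfrak{a},\ell_{i2}}\cap\ell_{i1}$, independent of $A$. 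Hence every plane $\alpha^{(A)}$ contains the fixed line $\vspan{Q_1,Q_2}$, and every $a_3^{(A)}=\alpha^{(A)}\cap\Sigma_3$ passes through the fixed point $Q_3:=\vspan{Q_1,Q_2}\cap\Sigma_3$; since $Q_1\in\ell_{11}$ and $Q_2\in\ell_{21}$, this point lies in $\vspan{\ell_{11},\ell_{21}}\cap\Sigma_3=\beta$. Without this (or an equivalent) argument the lemma is not proved.
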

\begin{proof}
    For every $A\in\mathfrak{a}\setminus\set{P_{12}}$ and each $i\in\set{1,2}$, the line $a_i^{(A)}$ is contained in $\vspan{\mathfrak{a},\ell_{i2}}$, a plane independent of the choice of $A$ that intersects $\ell_{i1}$ necessarily in a point $Q_i\notin\big(\pi_{12}\cup\pi_{i3}\big)$.
    The line $a_i^{(A)}$ has to intersect $\ell_{i1}$, thus it has to go through the point $Q_i$.
    As a first result, all lines of $\sett{a_3^{(A)}}{A\in\mathfrak{a}\setminus\set{P_{12}}}$ lie in the plane $\vspan{\mathfrak{a},\ell_{12},\ell_{22}}\cap\Sigma_3$ and hence are coplanar; the corresponding plane contains both $P_{13}$ and $P_{23}$ and hence also the line $\vspan{P_{13},P_{23}}=s$.
    As a second result, all planes of $\sett{\alpha^{(A)}}{A\in\mathfrak{a}\setminus\set{P_{12}}}$ go through the line $\vspan{Q_1,Q_2}$, the latter necessarily intersects $\Sigma_3$ in a point $Q_3\notin\set{Q_1,Q_2}$.
    Consequently, all lines of $\sett{a_3^{(A)}}{A\in\mathfrak{a}\setminus\set{P_{12}}}$ have to go through the point $Q_3$.
    As $Q_1\in\ell_{11}$ and $Q_2\in\ell_{21}$, the line $\vspan{Q_1,Q_2}$ lies in $\vspan{\ell_{11},\ell_{21}}$ and, hence, $Q_3$ lies in $\vspan{\ell_{11},\ell_{21}}\cap\Sigma_3=\beta$.
\end{proof}

We can now introduce yet another notation.

\begin{nt}
    By Lemma \ref{Lm_CoplanarAndConcurrent}, we know that for every line $\mathfrak{a}$ in $\pi_{12}$ through $P_{12}$, not equal to $\vspan{M_1,P_{12}}$ or $\vspan{M_2,P_{12}}$, the $q$ lines of $\sett{a_3^{(A)}}{A\in\mathfrak{a}\setminus\set{P_{12}}}$ are coplanar and concurrent; we will denote this unique plane by $\gamma^{(\mathfrak{a})}\supseteq s$ and this unique point of concurrence by $\mathcal{A}^{(\mathfrak{a})}\in\beta$.
\end{nt}

\begin{lm}\label{Lm_IntersectionPointsAreConic}
    The point set $\sett{\mathcal{A}^{(\mathfrak{a})}}{P_{12}\in\mathfrak{a}\subseteq\pi_{12}, \mathfrak{a}\notin\set{\vspan{M_1,P_{12}},\vspan{M_2,P_{12}}}}\cup\set{M_1,M_2}$ is an irreducible conic contained in $\beta$ that contains the point $S$.
\end{lm}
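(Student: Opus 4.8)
The plan is to set up explicit coordinates adapted to Configuration \ref{Conf_FourLines} and then compute the locus of the points $\mathcal{A}^{(\mathfrak{a})}$ directly, showing it satisfies a nondegenerate quadratic equation. I would choose a frame in which $\pi_{12}$, the plane $\beta$, the lines $\ell_{i1}$, $\ell_{i2}$ and the points $M_1,M_2,P_{12},P_{13},P_{23}$ all have simple coordinates; since many of the constraints in the configuration (the $\ell_{i1}$ meet $m$ at $M_i$, avoid $\pi_{12}$ and $\pi_{i3}$, and miss $\ell_{i2}$) are generic, I would parametrise the genuinely free data and keep it symbolic. The key computational device is already provided by Lemma \ref{Lm_CoplanarAndConcurrent}: a line $\mathfrak{a}$ through $P_{12}$ in $\pi_{12}$ determines the points $Q_1\in\ell_{11}$, $Q_2\in\ell_{21}$ as the intersections of the planes $\vspan{\mathfrak{a},\ell_{i2}}$ with $\ell_{i1}$, and then $\mathcal{A}^{(\mathfrak{a})}=\vspan{Q_1,Q_2}\cap\Sigma_3$. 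So the first step is to write $Q_1$ and $Q_2$ as explicit rational (in fact, projectively linear or quadratic) functions of the pencil parameter $t$ of $\mathfrak{a}$, and then intersect $\vspan{Q_1,Q_2}$ with $\Sigma_3$ to obtain $\mathcal{A}^{(\mathfrak{a})}(t)$ as a point of $\beta$ with coordinates polynomial in $t$.

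Once $\mathcal{A}^{(\mathfrak{a})}(t)$ is available as a parametrised curve in the plane $\beta\cong\pg{2}{q}$, the second step is to read off its degree and irreducibility. I expect the natural parametrisation to be of degree $2$ in $t$ (the two maps $t\mapsto Q_i$ are projectively linear, but the span-and-reintersect step with $\Sigma_3$ introduces one extra degree coming from the position of $\vspan{Q_1,Q_2}$ relative to $\Sigma_3$), giving a conic arc; eliminating $t$ from the three coordinate functions then yields a single quadratic form $C$ in the coordinates on $\beta$. I would verify that $C$ is not a product of two linear forms — equivalently that its $3\times3$ Gram matrix is nonsingular — by a determinant computation in the symbolic parameters; the genericity clauses of Configuration \ref{Conf_FourLines} (in particular $\ell_{i1}\not\subseteq\pi_{12},\pi_{i3}$ and $\ell_{i1}\cap\ell_{i2}=\emptyset$) should be exactly what guarantees this determinant is nonzero, and I would flag in the write-up which hypothesis kills which potential degeneration.

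The third step is to account for the two extra points $M_1,M_2$ and the claimed incidence $S\in C$. For $M_i$: the excluded lines $\vspan{M_1,P_{12}}$ and $\vspan{M_2,P_{12}}$ are the two pencil members missing from the parametrisation, and I would check that as $\mathfrak{a}\to\vspan{M_i,P_{12}}$ the point $\mathcal{A}^{(\mathfrak{a})}(t)$ tends to $M_i$ (geometrically: one of the $Q_j$ runs off to $M_i$ on $\ell_{i1}$, dragging the limit of the concurrence point to $M_i\in\beta$), so $M_1,M_2$ are precisely the two points filling in the parametrisation to a full conic — this is the cleanest way to see that adding them yields exactly the conic $C$ with no stray points. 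For $S=s\cap\beta$: I would show that the pencil member $\mathfrak{a}_0:=\vspan{A_0,P_{12}}$ for a suitable $A_0$ produces $\mathcal{A}^{(\mathfrak{a}_0)}=S$; more slickly, observe that $S\in s\subseteq\gamma^{(\mathfrak{a})}$ for every admissible $\mathfrak{a}$, so $S$ lies on the base line $s$ common to all the planes $\gamma^{(\mathfrak{a})}$, and a short argument identifying $s\cap\beta$ with the value of the parametrisation at the parameter for which $\gamma^{(\mathfrak{a})}\cap\beta$ degenerates onto $s$ pins down $S=\mathcal{A}^{(\mathfrak{a}_0)}$.

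The main obstacle is the irreducibility check: it is easy to produce the quadratic $C$, but proving the Gram determinant is nonzero \emph{for all prime powers $q$} requires being careful that the relevant polynomial in the configuration parameters does not vanish identically and does not vanish because of a small-characteristic accident — so I would organise the determinant as a product of factors each of which is visibly nonzero under an explicit clause of Configuration \ref{Conf_FourLines}, rather than hoping a single messy expression is generically nonzero. A secondary, milder nuisance is bookkeeping the limits at $\mathfrak{a}=\vspan{M_i,P_{12}}$ carefully enough to conclude the locus is \emph{exactly} the conic and not the conic minus or plus finitely many points; working projectively with homogeneous parameters (so that the "limit" values are just ordinary evaluations of the polynomial parametrisation) sidesteps most of this.
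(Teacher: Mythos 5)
Your overall strategy --- parametrise $\mathcal{A}^{(\mathfrak{a})}$ by the pencil of lines through $P_{12}$ in $\pi_{12}$, observe that the two maps $\mathfrak{a}\mapsto Q_i$ are projectivities onto $\ell_{i1}$ so that $\mathcal{A}^{(\mathfrak{a})}=\vspan{Q_1,Q_2}\cap\Sigma_3$ is a degree-two parametrised curve in $\beta$, and check that the excluded pencil members evaluate to $M_1$ and $M_2$ --- is sound and genuinely different from the paper's argument, which is entirely synthetic: the paper notes that the lines $a_2^{(A)}$ belong to the opposite regulus of the hyperbolic quadric $\mathcal{Q}$ determined by the three pairwise skew lines $\ell_{11}'$, $\ell_{21}$, $\ell_{22}$ in $\Sigma_2$, shows $\mathcal{Q}\cap\pi_{23}$ is an irreducible conic $\mathcal{C}$ by exhibiting the two generators of $\mathcal{Q}$ through $M_2$ and checking neither lies in $\pi_{23}$, and then recognises the lines $a_3^{(A)}$ as generators of the cone with vertex $P_{13}$ and base $\mathcal{C}$, so that intersecting with $\beta$ instead of $\pi_{23}$ gives the desired conic for free --- including the point $S$, since $s=\vspan{P_{13},P_{23}}$ is itself a generator of that cone. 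However, as written your proposal has two genuine gaps. First, the entire content of the lemma is concentrated in the irreducibility check, which you defer to a symbolic Gram-determinant computation that you do not carry out; worse, the criterion you propose (nonsingularity of the $3\times3$ Gram matrix of the quadratic form) is simply not available when $q$ is even: in characteristic $2$ the symmetrised matrix of $ax^2+by^2+cz^2+dxy+exz+fyz$ is alternating with zero diagonal and its determinant vanishes identically, so "Gram matrix nonsingular" cannot distinguish irreducible conics from line pairs there. Since the lemma must hold for even $q$ (Configuration \ref{Conf_FourLines} imposes no restriction on $q$), you would need a characteristic-free irreducibility argument --- e.g.\ the paper's "no generator through a point of the quadric lies in the cutting plane" criterion, or an explicit factorisation check of the quadratic itself.

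Second, your "slick" justification that $S$ lies on the conic does not work. Knowing that $S\in s\subseteq\gamma^{(\mathfrak{a})}$ for every admissible $\mathfrak{a}$ only tells you that each $\mathcal{A}^{(\mathfrak{a})}$ lies on the line $\gamma^{(\mathfrak{a})}\cap\beta$, which passes through $S$; a conic all of whose points are distributed over lines through a fixed external or internal point is not a contradiction, so this does not force $S$ onto the conic (and any counting refinement runs into the nucleus phenomenon for even $q$). Your fallback --- exhibiting a specific pencil member $\mathfrak{a}_0$ with $\mathcal{A}^{(\mathfrak{a}_0)}=S$ --- is circular in spirit, since identifying that member (in the paper it is $\vspan{M_0,P_{12}}$ with $M_0=t\cap m$ for $t$ the tangent at $S$) presupposes knowing that $S$ is on the conic and what its tangent is. The clean way to get $S$ is the one the paper uses: $P_{23}$ lies on the base conic $\mathcal{C}$, so the line $s=\vspan{P_{13},P_{23}}$ lies on the cone, and hence $s\cap\beta=S$ lies on the section of the cone by $\beta$. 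If you insist on the coordinate route, you should simply verify by substitution that the coordinates of $S$ satisfy the eliminated quadratic.
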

\begin{proof}
    The fact that $\ell_{11}$ and $\ell_{12}$ are disjoint implies that $P_{12}\notin\ell_{11}'$, hence each point $A\in\ell_{11}'\setminus\big(\set{M_1}\cup\vspan{M_2,P_{12}}\big)$ defines a distinct line $\vspan{A,P_{12}}$.
    As a consequence, each of the points in $\sett{\mathcal{A}^{(\mathfrak{a})}}{P_{12}\in\mathfrak{a}\subseteq\pi_{12}, \mathfrak{a}\notin\set{\vspan{M_1,P_{12}},\vspan{M_2,P_{12}}}}$ corresponds to at least one of the $q-1$ points in $\ell_{11}'\setminus\big(\set{M_1}\cup\vspan{M_2,P_{12}}\big)$.
    By Lemma \ref{Lm_CoplanarAndConcurrent}, it suffices to prove the statement for the set of intersection points of the lines in $\sett{a_3^{(A)}}{A\in\ell_{11}'\setminus\big(\set{M_1}\cup\vspan{M_2,P_{12}}\big)}$ with the plane $\beta$.
    
    By definition of $\ell_{11}'$, all lines of $\sett{a_1^{(A)}}{A\in\ell_{11}'\setminus\big(\set{M_1}\cup\vspan{M_2,P_{12}}\big)}$ go through $P_{13}$, hence the lines of $\sett{a_3^{(A)}}{A\in\ell_{11}'\setminus\big(\set{M_1}\cup\vspan{M_2,P_{12}}\big)}$ go through $P_{13}$ as well.
    On the other hand, the lines $\ell_{11}'$, $\ell_{21}$ and $\ell_{22}$ are pairwise disjoint and lie in the solid $\Sigma_2$, hence these define a unique regulus $\mathcal{R}$ corresponding to a hyperbolic quadric $\mathcal{Q}$; let $\mathcal{R}'$ denote its opposite regulus.
    As the lines of $\sett{a_2^{(A)}}{A\in\ell_{11}'\setminus\big(\set{M_1}\cup\vspan{M_2,P_{12}}\big)}$ each have to intersect $\ell_{11}'$, $\ell_{21}$ and $\ell_{22}$, these lines are contained in $\mathcal{R}'$.
    
    We claim that $\mathcal{Q}\cap\pi_{23}$ is an irreducible conic.
    To prove this, first observe that $\ell_{11}'$ intersects the line $\vspan{M_2,P_{12}}$ in a point other than $M_2$ or $P_{12}$.
    As $M_2$ and $P_{12}$ are contained in $\mathcal{Q}$, this implies that $\vspan{M_2,P_{12}}$ is a generator of $\mathcal{Q}$.
    Hence, $M_2$ is contained in the following two generators of $\mathcal{Q}$: $\vspan{M_2,P_{12}}$ and $\ell_{21}$, neither of which are contained in $\pi_{23}$.
    As a consequence, there does not exist a generator of $\mathcal{Q}$ in $\pi_{23}$ through $M_2\in\mathcal{Q}$, which implies that $\mathcal{Q}\cap\pi_{23}$ is an irreducible conic $\mathcal{C}$ (containing $M_1$, $M_2$ and $P_{23}$).
    
    In conclusion, each of the $q-1$ lines of $\sett{a_3^{(A)}}{A\in\ell_{11}'\setminus\big(\set{M_1}\cup\vspan{M_2,P_{12}}\big)}$ intersects the plane $\pi_{13}$ in the point $P_{13}$ and intersects the plane $\pi_{23}$ in a distinct point of $\mathcal{C}\setminus\set{M_1,M_2}$; hence, these lines are generators of the cone with vertex $P_{13}$ and base $\mathcal{C}$.
    Switching our views to the plane $\beta$ instead of the plane $\pi_{23}$ simply switches the base of this cone and hence finishes the proof.
\end{proof}

Having obtained the above lemma, we can yet again announce a notation.

\begin{nt}
    For every line $\mathfrak{a}$ in $\pi_{12}$ through $P_{12}$, not equal to $\vspan{M_1,P_{12}}$ or $\vspan{M_2,P_{12}}$, let $r^{(\mathfrak{a})}$ be the unique line in $\gamma^{(\mathfrak{a})}$ through $\mathcal{A}^{(\mathfrak{a})}$ not contained in $\sett{a_3^{(A)}}{A\in\mathfrak{a}\setminus\set{P_{12}}}$; note that such a line is skew to $m$ and is never equal to $s$.
\end{nt}

We are now ready to choose a fifth line $\ell_{31}$ that is skew to most planes of $\Pi^{(4)}$.

\begin{conf}\label{Conf_FiveLines}
    Let $q\neq2$; we extend Configuration \ref{Conf_FourLines}.
    Let $t$ be the tangent line through $S$ w.r.t.\ the irreducible conic described in Lemma \ref{Lm_IntersectionPointsAreConic}, let $M_0:=t\cap m\notin\set{M_1,M_2}$ and consider the line $\mathfrak{a}_0:=\vspan{M_0,P_{12}}\subseteq\pi_{12}$; note that $\mathcal{A}^{(\mathfrak{a}_0)}=S$, as all lines of its corresponding bundle have to intersect $\beta$ in a point of the conic lying on the tangent line $t$ (which is part of this bundle).
    Choose a point $M_3\in m\setminus\set{M_0,M_1,M_2}$ and choose $\ell_{31}$ to be a line through $M_3$ intersecting $r^{(\mathfrak{a}_0)}$ in a point outside of $\pi_{13}\cup\pi_{23}\cup\beta$.
    Note that, in this way, $\ell_{31}$ is skew to all $q$ lines of $\sett{a_3^{(A)}}{A\in\mathfrak{a}_0\setminus\set{P_{12}}}$, in particular the line $s$.
    Finally, define $Q:=\vspan{m,\ell_{31}}\cap s$.
    
    Be sure to keep Figure \ref{Fig_HigPigDim4} at hand to maintain an overview of this configuration.
\end{conf}

\begin{nt}\label{Nt_Planes5}
    Denote with $\Pi^{(5)}$ the set of all planes of $\Pi^{(4)}$ that intersect $\ell_{31}$.
\end{nt}

\begin{thm}\label{Thm_SixHigPigLines4}
    There exist six lines in $\pg{4}{q}$ in higgledy-piggledy arrangement, two of which intersect.
\end{thm}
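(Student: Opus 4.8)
The plan is to adjoin to the five lines $\ell_{11},\ell_{12},\ell_{21},\ell_{22},\ell_{31}$ of Configuration~\ref{Conf_FiveLines} a sixth line $\ell_{32}$ meeting $\ell_{31}$, chosen so that no plane of $\pg{4}{q}$ meets all six. Theorem~\ref{Res_CharHigPig} (the ``if'' part, which requires no bound on the number of subspaces) then immediately yields that the six lines are in higgledy-piggledy arrangement, while $Z\in\ell_{31}\cap\ell_{32}$ provides the promised intersecting pair. Throughout I assume $q\neq2$ so that Configuration~\ref{Conf_FiveLines} is available; the single case $q=2$ can be dealt with by exhibiting six explicit lines.

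The first and main task is to pin down $\Pi^{(5)}$, the set of planes meeting all five lines. By Lemma~\ref{Lm_CharPlanesPi4} every plane of $\Pi^{(4)}$ is of one of the four listed types. A type-$1$, resp.\ type-$2$, plane lies in $\Sigma_1$, resp.\ $\Sigma_2$, and meets $\Sigma_3$ in a line of $\pi_{13}$, resp.\ $\pi_{23}$, through $M_2$, resp.\ $M_1$, distinct from $m$; since $\ell_{31}\subseteq\Sigma_3$ meets $\pi_{13}$ and $\pi_{23}$ only in $M_3$ and $\vspan{M_2,M_3}=\vspan{M_1,M_3}=m$, no such plane meets $\ell_{31}$. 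The only type-$3$ plane is $\pi_{12}$, which meets $\ell_{31}$ exactly in $M_3$. For the type-$4$ planes $\alpha^{(A)}$, group the points $A$ by the line $\mathfrak{a}=\vspan{A,P_{12}}$ of $\pi_{12}$ through $P_{12}$: by Lemma~\ref{Lm_CoplanarAndConcurrent} the lines $a_3^{(A)}$, $A\in\mathfrak{a}\setminus\set{P_{12}}$, are precisely the lines of $\gamma^{(\mathfrak{a})}$ through $\mathcal{A}^{(\mathfrak{a})}$ other than $r^{(\mathfrak{a})}$. Since $\ell_{31}$ is skew to $s\subseteq\gamma^{(\mathfrak{a})}$, it meets the plane $\gamma^{(\mathfrak{a})}$ in a single point $X_{\mathfrak{a}}$, and $X_{\mathfrak{a}}\neq\mathcal{A}^{(\mathfrak{a})}$ because $\mathcal{A}^{(\mathfrak{a})}$ lies on the conic of Lemma~\ref{Lm_IntersectionPointsAreConic} whereas $M_3=\ell_{31}\cap\beta$ does not (the conic meets $m$ only in $M_1,M_2$). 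Consequently at most one $\alpha^{(A)}$ with $A\in\mathfrak{a}$ belongs to $\Pi^{(5)}$ — the one whose line $a_3^{(A)}$ runs through $X_{\mathfrak{a}}$, and only when $X_{\mathfrak{a}}\notin r^{(\mathfrak{a})}$ — and the distinguished pencil $\mathfrak{a}_0$ contributes nothing, since $\ell_{31}$ was built in Configuration~\ref{Conf_FiveLines} to be skew to every $a_3^{(A)}$ with $A\in\mathfrak{a}_0$. Hence $|\Pi^{(5)}|\leq(q-2)+1=q-1$, each plane of $\Pi^{(5)}$ meets $\ell_{31}$ in a single point, and all these points lie in a set $\set{M_3}\cup\sett{X_{\mathfrak{a}}}{\mathfrak{a}\neq\mathfrak{a}_0}$ of size at most $q-1$.

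Now choose the sixth line. As $\ell_{31}$ has $q+1\geq4$ points of which at most $q-1$ lie on a plane of $\Pi^{(5)}$, fix a point $Z\in\ell_{31}$ lying on no such plane. For $\alpha\in\Pi^{(5)}$ we then have $Z\notin\alpha$, so a line through $Z$ meets $\alpha$ if and only if it lies in the solid $\vspan{Z,\alpha}$; thus $\alpha$ forbids exactly the $q^2+q+1$ lines through $Z$ inside $\vspan{Z,\alpha}$, and in total at most $(q-1)(q^2+q+1)=q^3-1$ of the $q^3+q^2+q+1$ lines through $Z$ are forbidden. Pick $\ell_{32}$ to be one of the remaining lines through $Z$; it avoids every plane of $\Pi^{(5)}$, hence differs from each of the five given lines (each of which meets every plane of $\Pi^{(5)}$; if $\Pi^{(5)}=\emptyset$ take instead any line through $Z$ other than $\ell_{31}$). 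No plane of $\pg{4}{q}$ then meets all of $\ell_{11},\ell_{12},\ell_{21},\ell_{22},\ell_{31},\ell_{32}$, so by Theorem~\ref{Res_CharHigPig} these six lines form a higgledy-piggledy set, and $Z\in\ell_{31}\cap\ell_{32}$ exhibits two intersecting lines among them (indeed the only such pair once $q\geq6$, by Proposition~\ref{Prop_OptimalHigPigLines}).

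I expect the bulk of the work — and the main obstacle — to be the second step: extracting from the bundle/conic picture of Lemmas~\ref{Lm_CoplanarAndConcurrent}--\ref{Lm_IntersectionPointsAreConic} that each pencil of lines through $P_{12}$ donates at most one plane to $\Pi^{(5)}$ and that the pencil $\mathfrak{a}_0$ built from the tangent to that conic donates none; after that, locating $\ell_{32}$ is just a double count of lines through a point. I also anticipate that Theorem~\ref{Res_BlockingExternalLines} enters in the finer analysis of the points $X_{\mathfrak{a}}$ — which are controlled by the external lines to the conic in $\beta$ — to cover any awkward small or even-characteristic cases, although for all $q\geq3$ the crude estimate $|\Pi^{(5)}|\leq q-1$ already leaves a line through $Z$ to spare.
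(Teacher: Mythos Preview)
Your argument has a genuine gap: the type-$4$ planes of Lemma~\ref{Lm_CharPlanesPi4} are \emph{not} exhausted by the planes $\alpha^{(A)}$. Property~4 of that lemma allows the unique intersection point with $\pi_{12}$ to be $P_{12}$, $M_1$, or $M_2$, and the notation $\alpha^{(A)}$ (and hence the bundle machinery of Lemma~\ref{Lm_CoplanarAndConcurrent}) is only defined for $A\in\pi_{12}\setminus\big(\vspan{M_1,P_{12}}\cup\vspan{M_2,P_{12}}\big)$. When the intersection point is $P_{12}$ there is not a unique plane but a whole family: every line joining a point of $\ell_{11}''\setminus\set{M_1}$ to a point of $\ell_{21}''\setminus\set{M_2}$ determines such a plane, giving $q^2$ planes of $\Pi^{(4)}$ through $P_{12}$, of which exactly $q$ meet $\ell_{31}$ (they form one regulus of the hyperbolic quadric through $\ell_{11}''$, $\ell_{21}''$, $\ell_{31}$). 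Likewise there are further planes of $\Pi^{(5)}$ meeting $\pi_{12}$ only in $M_1$ or $M_2$. So your bound $|\Pi^{(5)}|\leq q-1$ is off by at least $q$, and the true count exceeds $q$.

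This wrecks both halves of your counting step. First, since $|\Pi^{(5)}|\geq q+1$ and each plane of $\Pi^{(5)}$ does meet $\ell_{31}$ in a point, you cannot guarantee a point $Z\in\ell_{31}$ lying on no plane of $\Pi^{(5)}$. Second, even granting such a $Z$, the forbidden lines through $Z$ would number at least $(q+1)(q^2+q+1)-\textnormal{(overlaps)}$, which is on the order of $q^3$ and no longer leaves room among the $q^3+q^2+q+1$ lines through $Z$. The paper handles precisely these extra families (its sets $\Pi_2$ and $\Pi_3$) by working inside a carefully chosen plane $\delta\subseteq\Sigma_3$: the $q$ planes of $\Pi_2$ cut $\delta$ in a conic, the planes of $\Pi_3$ all pass through a single point $Q\in\delta$, and Theorem~\ref{Res_BlockingExternalLines} is then genuinely needed to find a line in $\delta$ dodging that conic together with the (at most $q-3$) stray points from $\Pi_1$ and the point $Q$. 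In particular, the intersecting pair among the six lines is $\ell_{12}\cap\ell_{22}=\set{P_{12}}$, not $\ell_{31}$ and $\ell_{32}$.
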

\begin{proof}
    One can easily check the statement for $q=2$ using, for example, the package FinInG within GAP\footnote{The authors of \cite{AlfaranoBorelloNeriRavagnani} showed that the smallest strong blocking set obtainable in $\pg{4}{2}$ has size $13$.}.
    Therefore, we can assume that $q\neq2$ throughout this proof and consider Configuration \ref{Conf_FiveLines}.
    By Theorem \ref{Res_CharHigPig}, it suffices to prove that there exists a sixth line $\ell_{32}$ skew to all planes of $\Pi^{(5)}$.
    Considering the four properties described in Lemma \ref{Lm_CharPlanesPi4}, all planes of $\Pi^{(5)}$ either meet property $3.$ or $4.$ due to the choice of $\ell_{31}$ (see Configuration \ref{Conf_FiveLines}).
    Hence, we can consider a partition $\set{\Pi_1,\Pi_2,\Pi_3,\Pi_4}$ of $\Pi^{(5)}$, where
    \begin{itemize}
        \item $\Pi_1$ is the set of all planes of $\Pi^{(5)}$ intersecting the plane $\pi_{12}$ in precisely a point not contained in $\vspan{M_1,P_{12}}\cup\vspan{M_2,P_{12}}$,
        \item $\Pi_2$ is the set of all planes of $\Pi^{(5)}$ intersecting the plane $\pi_{12}$ in precisely the point $P_{12}$,
        \item $\Pi_3$ is the set of all planes of $\Pi^{(5)}$ intersecting the plane $\pi_{12}$ in precisely a point of $\set{M_1,M_2}$, and
        \item $\Pi_4:=\set{\pi_{12}}$.
    \end{itemize}
    By Lemma \ref{Lm_CoplanarAndConcurrent}, the planes of $\Pi_1$ intersect the solid $\Sigma_3$ in a set of $q^2-q$ lines, grouped in $q-1$ bundles of $q$ coplanar, concurrent lines; the planes containing each bundle are the $q-1$ planes through $s$ not containing $M_1$ or $M_2$, and the points of concurrence of the bundles form, together with $M_1$ and $M_2$, an irreducible conic $\mathcal{C}$ of $\beta$ (Lemma \ref{Lm_IntersectionPointsAreConic}).
    As $\ell_{31}$ is skew to $s$ and is not contained in $\beta$ (nor contains $M_1$ or $M_2$), the line $\ell_{31}$ meets at most one line per bundle.
    By choice of $\ell_{31}$ (see Configuration \ref{Conf_FiveLines}), this line is skew to all lines of at least one bundle.
    In conclusion, there are at most $q-2$ planes in $\Pi_1$, one of which intersects $\Sigma_3$ in the line $\vspan{M_3,S}$.
    
    Now consider the planes of $\Pi_2$.
    By definition of $\ell_{11}''$ and $\ell_{21}''$, each line connecting a point of $\ell_{11}''\setminus\set{M_1}$ with a point of $\ell_{21}''\setminus\set{M_2}$ defines a unique plane of $\Pi^{(4)}$ that intersects $\pi_{12}$ in precisely the point $P_{12}$.
    Of these $q^2$ planes, only $q$ intersect $\ell_{31}$ (thus $|\Pi_2|=q$) and hence are part of a regulus of the unique hyperbolic quadric $\mathcal{Q}$ defined by the pairwise disjoint lines $\ell_{11}''$, $\ell_{21}''$ and $\ell_{31}$.
    
    \bigskip
    Let $e$ be an external line to $\mathcal{C}$ in $\beta$ through $M_3$ (note that this always exists, as $M_3$ lies on the $2$-secant $m$ to $\mathcal{C}$ and hence can never be a nucleus) and define the plane $\delta:=\vspan{e,Q}$.
    We claim that $\delta$ intersects $\mathcal{Q}$ in an irreducible conic.
    Note that as $M_1,M_2,M_3\in\mathcal{Q}$, the line $m$ is a generator of $\mathcal{Q}$ through $M_3$.
    The second generator of $\mathcal{Q}$ through $M_3$ is $\ell_{31}$.
    None of these two generators are contained in $\delta$, hence there does not exist a generator of $\mathcal{Q}$ that is contained in $\delta$ and goes through $M_3\in\mathcal{Q}$, implying that $\delta\cap\mathcal{Q}$ must be an irreducible conic.
    
    Observe that all planes of $\Pi_1$ intersect $\delta$ in at most a point.
    After all, if this would not be the case, an intersection line of a plane of $\Pi_1$ with $\Sigma_3$ would lie in $\delta$.
    Such intersection line also contains a point of the conic $\mathcal{C}$.
    However, the plane $\delta$ intersects the plane $\beta$ in the external line $e$ to $\mathcal{C}$, a contradiction.
    
    Note that, as said before, precisely one of the planes of $\Pi_1$ intersects $\Sigma_3$ in a line going through $M_3\in\delta$ and hence intersects $\delta$ in precisely that point.
    However, $M_3$ is already contained in $\mathcal{Q}$.
    In conclusion, all planes of $\Pi_1\cup\Pi_2$ intersect the plane $\delta$ in a point set $\mathcal{P}$ consisting of all $q+1$ points of an irreducible conic containing $M_3$ (originating from $\Pi_2$), together with at most $q-3$ extra points (originating from $\Pi_1$).
    By Theorem \ref{Res_BlockingExternalLines}, we can choose a line $\ell_{32}$ in $\delta$ that avoids all points of $\mathcal{P}\cup\set{Q}$.
    As $\ell_{32}\subseteq\Sigma_3$ is consequently skew to the line $m\ni M_3$ (as $m\nsubseteq\delta$), this line is skew to all planes of $\Pi_1\cup\Pi_2\cup\Pi_4$.
    
    \bigskip
    We claim that $\ell_{32}$ is skew to all planes of $\Pi_3$ as well, finishing the proof.
    Suppose that $\alpha\in\Pi_3$.
    Note that $\alpha\nsubseteq\Sigma_3$ as else it has to contain the points $M_1$, $M_2$, $P_{13}$ and $P_{23}$ to be able to intersect the lines $\ell_{11}$, $\ell_{12}$, $\ell_{21}$ and $\ell_{22}$, but those points are not coplanar.
    Hence, for each $i\in\set{1,2,3}$, $\alpha$ intersects $\Sigma_i$ in a line $a_i$.
    Suppose that $\alpha$ intersects $\pi_{12}$ in precisely the point $M_j$ for a $j\in\set{1,2}$ (which implies that $a_1\neq a_2$).
    Then, for every $i\in\set{1,2}$, the line $a_i$ intersects $\ell_{i2}$ in a point $Q_i$.
    Hence, the plane $\alpha$ contains two distinct points $Q_1$ and $Q_2$ of the plane $\vspan{\ell_{12},\ell_{22}}$ and hence has to intersect the line $s$, which means that the line $a_3$ has to intersect the line $s$.
    As $a_3$ has to intersect the line $\ell_{31}$ as well, it has to be contained in the plane $\vspan{M_j,\ell_{31}}$, which intersects the line $s$ in $Q$; thus $a_3$ has to go through $Q$.
    In conclusion, as $a_3$ is not contained in $\delta$ (because $M_j\notin\delta$), it has to intersect $\delta$ in precisely the point $Q$, which gets avoided by the line $\ell_{32}$.
\end{proof}

As a consequence, we immediately get the following.

\begin{crl}\label{Crl_SixHigPigPlanes4}
    There exist six planes of $\pg{4}{q}$ in higgledy-piggledy arrangement, two of which intersect in a line.
\end{crl}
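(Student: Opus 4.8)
The plan is to invoke duality (Proposition \ref{Prop_Duality}) on the six lines produced by Theorem \ref{Thm_SixHigPigLines4}. Those six lines form a higgledy-piggledy line set $\mathcal{L}$ in $\pg{4}{q}$ with $|\mathcal{L}| = 6 = \n + \lfloor \n/2 \rfloor$ for $\n = 4$, and two of them intersect while the other four are in general position. I would first check that $|\mathcal{L}| \le q$ is needed to apply Proposition \ref{Prop_Duality}; since $6 \le q$ fails for $q \in \{2,3,4,5\}$, these small cases have to be handled separately (by a direct construction or computer check), and for $q \ge 7$ the duality argument goes through cleanly. Actually, one should be slightly careful: the statement of the corollary claims existence for \emph{all} prime powers $q$, so the clean move is to note that the construction in Configuration \ref{Conf_FiveLines} and Theorem \ref{Thm_SixHigPigLines4} is explicit, and one can dualise the configuration itself rather than merely invoking the abstract Proposition \ref{Prop_Duality}, thereby sidestepping the $|\mathcal{L}| \le q$ hypothesis.

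Here is the concrete route. In $\pg{4}{q}$, $k = 1$ and $\n - k - 1 = 2$, so the dual of a line is a plane. Apply the standard correlation $\bot$ of $\pg{4}{q}$ (induced by any non-degenerate bilinear form) to the six lines $\ell_{11}, \ell_{12}, \ell_{21}, \ell_{22}, \ell_{31}, \ell_{32}$; this yields six planes $\ell_{11}^\bot, \dots, \ell_{32}^\bot$. Since a correlation is an incidence-reversing bijection on subspaces, dimensions are complemented, so two lines meet in a point exactly when their dual planes span a solid, i.e.\ meet in a line; hence the two intersecting lines among the six produce two planes meeting in a line, and no three of the dual planes have a common line. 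Then one invokes the characterisation Theorem \ref{Res_CharHigPig}: $\mathcal{L}$ being higgledy-piggledy means (by the converse, valid since $|\mathcal{L}| = 6 \le q$ when $q \ge 7$; and by a direct check when $q < 7$ — but see below) that no line of $\pg{4}{q}$ meets all six lines of $\mathcal{L}$; dualising, no line of $\pg{4}{q}$ is contained in all six dual planes. By the "if" direction of Theorem \ref{Res_CharHigPig}, applied now with $k = 2$ and $\n - k - 1 = 1$, this forces the six dual planes to be a higgledy-piggledy set of planes — and crucially the "if" direction carries no size restriction, so it applies for every $q$.

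The one genuine subtlety — and the main obstacle — is the asymmetry in the hypotheses of Theorem \ref{Res_CharHigPig}: the "no transversal $(\n-k-1)$-subspace $\Rightarrow$ higgledy-piggledy" implication is unconditional, but the converse requires $|\mathcal{K}| \le q$. To conclude that $\mathcal{L}$ has no common line transversal I need the converse, which is fine for $q \ge 7$ but not automatic for $q \in \{2,3,4,5\}$. The cleanest fix is to observe that the proof of Theorem \ref{Thm_SixHigPigLines4} in fact establishes more than higgledy-piggledy: it exhibits a sixth line $\ell_{32}$ skew to every plane in $\Pi^{(5)}$, i.e.\ it is really proving directly that no line meets all of $\ell_{11}, \dots, \ell_{32}$ (that is exactly what Theorem \ref{Res_CharHigPig} is being used for there), so the "no common line transversal" property holds for all $q \ne 2$ by construction, and for $q = 2$ by the stated computer check. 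Thus for every prime power $q$ we have six lines with no common line transversal; dualising gives six planes with no common line transversal, hence a higgledy-piggledy set of planes, two of which meet in a line. I would write the proof as: "Apply the correlation $\bot$ to the six lines of Theorem \ref{Thm_SixHigPigLines4}; since the construction guarantees no line of $\pg{4}{q}$ meets all six, Theorem \ref{Res_CharHigPig} (the unconditional direction, with $k = 2$) shows the six dual planes are in higgledy-piggledy arrangement, and incidence reversal turns the one pair of intersecting lines into a pair of planes meeting in a line."
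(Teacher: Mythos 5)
Your overall strategy coincides with the paper's for $q\geq7$: there the corollary is exactly Theorem \ref{Thm_SixHigPigLines4} combined with Proposition \ref{Prop_Duality}. Where you genuinely depart from the paper is in the range $3\leq q\leq5$: the paper simply runs a direct computer search for the six planes (Code Snippet \ref{Snip_Planes4}), whereas you observe that the proof of Theorem \ref{Thm_SixHigPigLines4} does not merely certify the higgledy-piggledy property but actually exhibits six lines admitting no transversal \emph{plane}. (Be careful with the terminology: the transversal objects to a line set of $\pg{4}{q}$ are planes, not lines as you write; and dually, a line is a transversal to a plane set when it \emph{meets} every plane, not when it is contained in every plane.) Since ``no plane meets all six lines'' dualises to ``no line meets all six dual planes'', the unconditional direction of Theorem \ref{Res_CharHigPig} with $k=2$ then applies with no restriction on $q$, and the incidence reversal correctly turns the pair of concurrent lines $\ell_{12},\ell_{22}$ into a pair of planes meeting in a line. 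This is a legitimate and arguably cleaner route for $3\leq q\leq5$, and it correctly identifies the real obstruction to a naive application of Proposition \ref{Prop_Duality}, namely that its hypothesis $|\mathcal{K}|\leq q$ fails for small $q$.

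However, your treatment of $q=2$ has a genuine gap. Configuration \ref{Conf_FiveLines} explicitly excludes $q=2$, so for that value the only input from Theorem \ref{Thm_SixHigPigLines4} is a computer verification that six suitable lines are in higgledy-piggledy arrangement. Since $|\mathcal{L}|=6>q=2$, the converse direction of Theorem \ref{Res_CharHigPig} is unavailable, so the higgledy-piggledy property does \emph{not} entail the absence of a common transversal plane, and your dualisation step has nothing to latch onto; ``the stated computer check'' does not certify what you need. You would either have to strengthen the $q=2$ check so that it verifies the transversal-free property of the six lines (which may or may not be achievable in $\pg{4}{2}$), or, as the paper does, run a separate computer search directly for six planes of $\pg{4}{2}$, two meeting in a line, whose union is a strong $2$-blocking set.
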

\begin{proof}
    If $q\in\set{2,3,4,5}$, some computer-assisted searches prove the statement; see Code Snippet \ref{Snip_Planes4}.
    If $q\geq7$, the corollary follows immediately from Theorem \ref{Thm_SixHigPigLines4} and Proposition \ref{Prop_Duality}.
\end{proof}

\section{Higgledy-piggledy sets of \texorpdfstring{$\boldsymbol{\pgTitle{5}{q}}$}{PG(5,q)}}\label{Sect_HigPig5}

As an addition to Section \ref{Sect_HigPig4}, we briefly discuss small higgledy-piggledy sets of $k$-subspaces, $k\in\set{1,2,3}$.
There exists an optimal higgledy-piggledy line set in $\pg{5}{q}$ as the case $k=1$ was already considered in \cite{BartoliCossidenteMarinoPavese} (see Theorem \ref{Res_SporadicExamples}($3.$)).
As a consequence, the case $k=3$ is solved as well.

\begin{crl}\label{Crl_SevenHigPigSolids5}
    There exist seven solids of $\pg{5}{q}$, $q\geq7$, in higgledy-piggledy arrangement.
\end{crl}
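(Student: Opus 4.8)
The plan is to obtain this statement directly by dualisation, with no fresh construction required; the remark ``As a consequence, the case $k=3$ is solved as well'' preceding the corollary already hints at this. The key input is Theorem~\ref{Res_SporadicExamples}$(3.)$, which provides, for every prime power $q$, a set $\mathcal{L}$ of seven pairwise disjoint lines of $\pg{5}{q}$ in higgledy-piggledy arrangement. I would first observe that $|\mathcal{L}| = 7 \leq q$ precisely because of the standing hypothesis $q \geq 7$; this size bound is exactly the condition needed to exploit the natural duality of the projective space.

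Next I would apply Proposition~\ref{Prop_Duality} with $\n = 5$ and $k = 1$: since $\mathcal{L}$ is a higgledy-piggledy set of $1$-subspaces with $|\mathcal{L}| \leq q$, the set $\mathcal{L}^\bot$ of dual subspaces of the elements of $\mathcal{L}$ is a higgledy-piggledy set of $(\n-k-1)$-subspaces, i.e.\ of $3$-subspaces (solids) of $\pg{5}{q}$. As dualisation is a bijection on subspaces, $|\mathcal{L}^\bot| = 7$, so $\mathcal{L}^\bot$ is a higgledy-piggledy set of seven solids of $\pg{5}{q}$, which proves the corollary.

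Although it is not part of the bare statement, I would also record the intersection pattern announced in the Main Results. Any two disjoint lines of $\pg{5}{q}$ span a solid, so dualising this fact shows that any two elements of $\mathcal{L}^\bot$ meet in a line; equivalently, since $\n = 5$ is odd and $|\mathcal{L}^\bot| = \n + \left\lfloor\frac{\n}{2}\right\rfloor = 7 \leq q$, Proposition~\ref{Prop_OptimalHigPigN-2}$(1.)$ applies and the seven solids pairwise intersect in an $(\n-4)$-subspace, i.e.\ a line. Hence $\mathcal{L}^\bot$ is in fact an \emph{optimal} higgledy-piggledy set of solids of $\pg{5}{q}$.

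I do not expect a genuine obstacle here: the result is truly a corollary, and the only delicate point is the role of the hypothesis $q \geq 7$, which is present solely to guarantee $|\mathcal{L}| \leq q$ so that Proposition~\ref{Prop_Duality} can be invoked. The residual cases $q \in \set{2,3,4,5}$ lie outside the reach of this argument and would require a separate (e.g.\ computer-assisted) treatment, but they are not claimed in the statement.
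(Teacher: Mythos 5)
Your proposal is correct and follows exactly the paper's route: invoke Theorem \ref{Res_SporadicExamples}($3.$) for seven pairwise disjoint higgledy-piggledy lines of $\pg{5}{q}$ and dualise via Proposition \ref{Prop_Duality}, with $q\geq7$ serving precisely to guarantee $|\mathcal{L}|\leq q$. Your added observation on the pairwise line intersections matches the paper's remark immediately following the corollary.
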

\begin{proof}
    This follows immediately from Theorem \ref{Res_SporadicExamples}($3.$) and Proposition \ref{Prop_Duality}.
\end{proof}

This set is an optimal higgledy-piggledy set of solids in $\pg{5}{q}$.
Note that, by Proposition \ref{Prop_OptimalHigPigN-2}, these seven solids necessarily pairwise intersect in a line and that these intersection lines are pairwise disjoint.

\bigskip
The only remaining non-trivial case is $k=2$.
If we assume that $q\geq7$, then, by Theorem \ref{Thm_LowerBound}, a higgledy-piggledy set of planes in $\pg{5}{q}$ has size at least seven.
By Theorem \ref{Res_HigPigExistenceGeneral}, we know that there exists a higgledy-piggledy set of planes in $\pg{5}{q}$ of size ten.
All in all, there still seems to be room for improvement.
We will prove the existence of eight pairwise disjoint planes of $\pg{5}{q}$ in higgledy-piggledy arrangement (Theorem \ref{Thm_EightHigPigPlanes5}) by making use of field reduction (see Section \ref{Sect_ConstructionsAndExtremalSets}).

\begin{df}\label{Def_Subline}
    Let $m\in\NN\setminus\set{0}$.
    An \emph{$\FF_q$-subline} of $\pg{1}{q^m}$ is a point set isomorphic to $\pg{1}{q}$.
\end{df}

By considering the underlying vector space of $\pg{1}{q^m}$, one can easily see that each three distinct points define a unique $\FF_q$-subline of $\pg{1}{q^m}$.
Throughout this section, we mainly focus on the case $m=3$.

\bigskip
We remind the reader about the field reduction map $\mathcal{F}_{\n'+1,k+1,q}$ introduced in \eqref{Eq_FieldRed}, and consider this map for $\n'=1$ and $k=2$:
\[
    \mathcal{F}_{2,3,q}:\pg{1}{q^3}\rightarrow\pg{5}{q}\textnormal{,}
\]
with $\mathcal{D}_{2,3,q}$ the corresponding Desarguesian spread of planes in $\pg{5}{q}$.

\bigskip
Let $\mathcal{P}$ be an $\FF_q$-linear set of $\pg{1}{q^3}$.
Then precisely one of the following holds.
\begin{enumerate}
    \item $\mathcal{P}$ has rank $0$ and $|\mathcal{P}|=0$.
    \item $\mathcal{P}$ has rank $1$ and $|\mathcal{P}|=1$.
    \item $\mathcal{P}$ has rank $2$ and $\mathcal{P}$ is an $\FF_q$-subline.
    \item $\mathcal{P}$ has rank $3$ and $|\mathcal{P}|=q^2+1$; in this case, $\mathcal{P}$ will be called an \emph{$\FF_q$-club}.
    \item $\mathcal{P}$ has rank $3$ and $|\mathcal{P}|=q^2+q+1$; in this case, $\mathcal{P}$ will be called a \emph{scattered $\FF_q$-linear set}.
    \item $\mathcal{P}$ has rank $4$ and $|\mathcal{P}|=q^3+1$.
\end{enumerate}

These definitions correspond to the ones arising in the literature.
It is somewhat natural to consider these six types of $\FF_q$-linear sets of $\pg{1}{q^3}$, as the authors of \cite[Theorem $5$]{LavrauwVandeVoorde1} have proven that all sets of each type are projectively equivalent.
The authors of \cite{LavrauwVandeVoorde1} present several other results, which pave many paths towards our goal.
We select precisely those results that will lead us there the fastest.

The following result was already presented in \cite{FancsaliSziklai1} but, as the authors of \cite{LavrauwVandeVoorde1} point out, the proof was incomplete as they assumed the projective equivalence of $\FF_q$-clubs, which wasn't proven up to that point and is generally not true for $\FF_q$-clubs of $\pg{1}{q^m}$, $m\geq4$ \cite[Theorem $5$]{LavrauwVandeVoorde1}.

\begin{thm}[{\cite[Theorem $8$]{LavrauwVandeVoorde1}}]\label{Res_LinearSetSubline}
    An $\FF_q$-linear set intersects an $\FF_q$-subline of $\pg{1}{q^3}$ in $0$, $1$, $2$, $3$ or $q+1$ points.
\end{thm}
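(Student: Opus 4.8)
The plan is to work entirely inside the ambient space $\pg{5}{q}$ via the field reduction map $\mathcal{F}_{2,3,q}$, translating the statement about $\FF_q$-linear sets and $\FF_q$-sublines of $\pg{1}{q^3}$ into a statement about subspaces meeting the Desarguesian spread $\mathcal{D}_{2,3,q}$. Recall that an $\FF_q$-linear set $\mathcal{P}$ of rank $k'$ corresponds to the collection of spread planes meeting some fixed $(k'-1)$-subspace $\kappa$, and an $\FF_q$-subline $\mathcal{B}$ of $\pg{1}{q^3}$ corresponds to the spread planes meeting a fixed line $\lambda$ (a line disjoint from no spread element, since a subline has $q+1$ points and each point is a spread plane — $\lambda$ meets each of those $q+1$ planes in exactly one point and meets no other spread plane). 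So the intersection $|\mathcal{P}\cap\mathcal{B}|$ equals the number of spread planes that meet \emph{both} $\kappa$ and $\lambda$.

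First I would dispose of the low-rank cases trivially: if $\mathcal{P}$ has rank $0$ or $1$ the intersection is $0$ or at most $1$, and if $\mathcal{P}$ has rank $2$ then $\mathcal{P}$ is itself an $\FF_q$-subline, and two distinct $\FF_q$-sublines of $\pg{1}{q^3}$ meet in at most $2$ points (two points determine the subline through them only if a third is prescribed, but any two sublines share at most two points by the vector-space argument preceding Theorem~\ref{Res_LinearSetSubline}), while identical sublines give $q+1$; this already yields $0,1,2$ or $q+1$. The substantive case is therefore rank $3$, i.e.\ $\mathcal{P}$ an $\FF_q$-club or a scattered $\FF_q$-linear set, corresponding to a \emph{plane} $\kappa$ of $\pg{5}{q}$, and I would then analyse the number of spread planes meeting both the plane $\kappa$ and the line $\lambda$. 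By the projective equivalence of all $\FF_q$-clubs (resp.\ all scattered linear sets) of $\pg{1}{q^3}$ from \cite[Theorem~5]{LavrauwVandeVoorde1}, one may put $\kappa$ in a convenient normal form; likewise one may assume $\lambda$ is a generic line, reducing to a finite case analysis governed by the dimension of $\kappa\cap\lambda$ (which is $-1$, $0$, or — if $\lambda\subseteq\kappa$ — the subline is contained in $\mathcal{P}$, giving $q+1$). The key combinatorial input is that a spread plane meeting a line $\lambda$ meets it in exactly one point, so the $q+1$ spread planes through the points of $\lambda$ are in natural bijection with those points; the count $|\mathcal{P}\cap\mathcal{B}|$ thus equals the number of points $R\in\lambda$ whose spread plane $\mathcal{F}_{2,3,q}(R)$ also meets $\kappa$.

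The main obstacle I anticipate is the rank-$3$ generic case where $\kappa\cap\lambda$ is a single point (or empty): one must show that, apart from the degenerate configurations, the number of spread planes meeting both $\kappa$ and $\lambda$ is at most $3$. The cleanest route is probably to exploit the structure of the linear set on the $\FF_{q^3}$-side directly rather than counting in $\pg{5}{q}$: an $\FF_q$-club is the image of a subline under a suitable degenerate $\FF_q$-linear map, and its intersection with another subline is controlled by the fact that a nonzero $\FF_q$-linear map $\FF_{q^3}\to\FF_{q^3}$ of rank $2$ has a $1$-dimensional kernel, forcing any ``coincidence locus'' to be cut out by a low-degree condition with at most $3$ solutions; for the scattered case one similarly uses that scattered linear sets of rank $3$ arise from $\FF_q$-linear maps of full rank and the relevant coincidence is again governed by an equation of degree bounded by $3$. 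I would present this via the explicit coordinate description of the six types from \cite{LavrauwVandeVoorde1} and verify the bound $3$ by a direct (short) computation in each normal form, noting throughout that the extremal value $q+1$ occurs precisely when the subline is contained in $\mathcal{P}$.
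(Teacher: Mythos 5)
This theorem is quoted in the paper from \cite[Theorem~$8$]{LavrauwVandeVoorde1} without proof, so there is no in-paper argument to compare yours against; I can only assess the proposal on its own merits. The skeleton is reasonable: the reduction to rank $3$ is fine (ranks $0$ and $1$ are trivial, rank $2$ follows from the uniqueness of the subline through three points, and rank $4$ gives the whole line, hence $q+1$), and you correctly identify that everything hinges on showing that a rank-$3$ linear set not containing the subline meets it in at most $3$ points. But that key step is where the proposal has genuine gaps.

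First, your case division in $\pg{5}{q}$ is not the right one: a subline $\mathcal{B}$ does not have a single defining line $\lambda$. Its $q+1$ spread planes form one system of a Segre variety and admit $q^2+q+1$ transversal lines, any one of which defines the same subline; hence $\mathcal{B}\subseteq\mathcal{P}$ iff the plane $\kappa$ contains \emph{some} transversal of $\mathcal{B}$, not iff it contains your chosen $\lambda$. With your dichotomy, the branch ``$\lambda\nsubseteq\kappa$ implies at most $3$'' is simply false. Second, and more seriously, the assertion that the coincidence locus is ``cut out by a low-degree condition with at most $3$ solutions'' is not substantiated, and the naive version of it fails: writing the linear set as $\sett{\vspan{(x,f(x))}}{x\in\FF_{q^3}^*}$ with $f$ an $\FF_q$-linear map and taking the standard subline, the condition ``$f(x)/x\in\FF_q$ for some $x\neq0$'' is the vanishing of $f(x)^qx-f(x)x^q$, a polynomial of degree $q^2+q$, and the resulting root count only bounds the number of admissible $t\in\FF_q$ by roughly $q+2$, not $3$. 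The bound $3$ does hold, but via a mechanism you do not identify: one should normalise the \emph{subline} (not the linear set --- normalising the club or scattered set to a standard form still leaves the subline ranging over a large family, so there is no finite list of configurations to check) to $\set{(1,t):t\in\FF_q}\cup\set{(0,1)}$; then, when $U$ is a graph $\set{(x,f(x))}$, the point $(1,t)$ with $t\in\FF_q$ lies in $L_U$ iff $f-t\cdot\mathrm{id}$ is a singular $\FF_q$-endomorphism of $\FF_{q^3}$, i.e.\ iff $t$ is a root of the degree-$3$ characteristic polynomial of $f$ over $\FF_q$ (which is never identically zero), and the remaining cases, where $U$ meets $\set{0}\times\FF_{q^3}$ nontrivially, are where a determinant of degree $\leq2$ in $t$ can vanish identically and produce the value $q+1$. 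Without this (or an equivalent) argument, the proposal does not prove the theorem.
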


The next result has a story similar to the one above, as this result was first proven in \cite{FerretStorme} for $\FF_q$-linear sets of $\pg{1}{q^3}$, $q=p^h$, $p\geq7$, based on the projective equivalence of $\FF_q$-clubs and scattered $\FF_q$-linear sets.
Although the original theorem of \cite{LavrauwVandeVoorde1} concerns $\FF_q$-linear sets of $\pg{1}{q^m}$, we simplify their result to fit our needs.

\begin{thm}[{\cite[Theorem $23$]{LavrauwVandeVoorde1}}]\label{Res_LinearSetIntersection}
    Two distinct $\FF_q$-linear sets of rank at most $3$ in $\pg{1}{q^3}$, $q\geq4$, share at most $2q+3$ points.
\end{thm}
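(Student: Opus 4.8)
The plan is to reduce to the only genuinely difficult configuration and then run a case analysis built around the $\FF_q$-subspaces underlying the two linear sets. First dispose of the trivial case: if either $\mathcal{P}_i$ has rank at most $2$ it is (contained in) an $\FF_q$-subline, hence has at most $q+1\le2q+3$ points, and nothing is left to prove. So assume both $\mathcal{P}_1=L_{U_1}$ and $\mathcal{P}_2=L_{U_2}$ have rank exactly $3$, where $U_1,U_2$ are $3$-dimensional $\FF_q$-subspaces of the $6$-dimensional space $V=\FF_{q^3}^2$, neither of them $\FF_{q^3}$-linear. Then for every point $P=\langle v\rangle_{\FF_{q^3}}$ the weight $w_i(P):=\dim_{\FF_q}\!\big(\langle v\rangle_{\FF_{q^3}}\cap U_i\big)$ lies in $\set{0,1,2}$, with weight $2$ occurring for no point when $\mathcal{P}_i$ is scattered and for exactly one point when $\mathcal{P}_i$ is an $\FF_q$-club (this is exactly the dichotomy forced by the classification of rank-$3$ linear sets of $\pg{1}{q^3}$ recalled above).

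Next I would set $Z:=U_1\cap U_2$ and split on $d:=\dim_{\FF_q}Z\in\set{0,1,2}$; the value $d=3$ is impossible, as it forces $U_1=U_2$ and hence $\mathcal{P}_1=\mathcal{P}_2$. The obvious inclusion $L_Z\subseteq\mathcal{P}_1\cap\mathcal{P}_2$ holds, and $|L_Z|$ equals $0$, $1$ or $q+1$ according as $d=0,1,2$. The real work is to bound the set of \emph{transversal} points $T:=(\mathcal{P}_1\cap\mathcal{P}_2)\setminus L_Z$. For $P=\langle v\rangle\in T$ the subspaces $\langle v\rangle\cap U_1$ and $\langle v\rangle\cap U_2$ are both nonzero but meet only in $0$ inside the $3$-dimensional space $\langle v\rangle_{\FF_{q^3}}$, so their dimension pair is $(1,1)$, $(1,2)$ or $(2,1)$; the pairs involving a $2$ force $P$ to be the (unique) club point of $\mathcal{P}_2$, resp.\ of $\mathcal{P}_1$, so they contribute at most $2$ points. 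It remains to bound the number of $P\in T$ with $w_1(P)=w_2(P)=1$.

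For those $P$ I would argue inside the join $U_1+U_2$, of $\FF_q$-dimension $6-d$: the $2$-spaces $s_P:=(\langle v\rangle\cap U_1)\oplus(\langle v\rangle\cap U_2)$ are pairwise $0$-intersecting (distinct points of $\pg{1}{q^3}$ correspond to $\FF_{q^3}$-lines meeting only in $0$), each $s_P$ meets $U_1$ and $U_2$ in exactly a line, each $s_P$ is disjoint from $Z$, and — crucially — each $s_P$ is contained in an $\FF_{q^3}$-line. Packaging these constraints as intersection conditions between suitable auxiliary $\FF_q$-linear sets/sublines and invoking Theorem \ref{Res_LinearSetSubline} (intersection of a linear set with an $\FF_q$-subline has size in $\set{0,1,2,3,q+1}$) should cap the count at one $\FF_q$-subline's worth of points per $d$-value, yielding the total $2q+3$ (morally $q+1$ from $L_Z$ together with at most $q+2$ truly transversal points). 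An alternative, possibly cleaner, implementation uses field reduction: via $\mathcal{F}_{2,3,q}$ and the Desarguesian spread $\mathcal{D}_{2,3,q}$ (as in Theorem \ref{Thm_FieldRed}), $\mathcal{P}_1,\mathcal{P}_2$ correspond to planes $\kappa_1,\kappa_2$ of $\pg{5}{q}$ with $P\in\mathcal{P}_i$ iff the spread plane $\pi_P$ meets $\kappa_i$; then $\mathcal{P}_1\cap\mathcal{P}_2$ is the set of spread planes meeting both $\kappa_i$, one splits on $\dim(\kappa_1\cap\kappa_2)\in\set{-1,0,1}$, and uses that a fixed subspace meets only a subline's worth of spread planes while a solid of $\pg{5}{q}$ contains at most one spread plane.

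The main obstacle is precisely this transversal count. Purely combinatorial bounds — pairwise-disjoint $2$-spaces disjoint from $Z$ in a $4$-space, or partial spreads of $\pg{3}{q}$ — only give $O(q^2)$ and are useless here; one genuinely has to exploit that each $s_P$ lies inside an $\FF_{q^3}$-line and meets $U_1,U_2$ in prescribed lines, so that the $P\in T$ are governed by the intersection behaviour of a few sublines. Making this quantitative (and checking that $q\ge4$ suffices for the relevant instances of Theorem \ref{Res_LinearSetSubline}) is where the bound $2q+3$, rather than something weaker, is forced.
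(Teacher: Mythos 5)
The paper does not prove this statement itself; it is imported verbatim as \cite[Theorem $23$]{LavrauwVandeVoorde1}, so there is no in-paper argument to compare against. Judged on its own terms, your proposal sets up the right framework (weight analysis of points with respect to $U_1$ and $U_2$, the subspace $Z=U_1\cap U_2$, the split of $\mathcal{P}_1\cap\mathcal{P}_2$ into $L_Z$ and the transversal points $T$, and the observation that weight pairs $(1,2)$ and $(2,1)$ contribute at most two points), but it stops exactly where the theorem actually lives: you never bound the number of transversal points of weight pair $(1,1)$. You acknowledge this yourself, and the sketched remedy --- ``packaging these constraints as intersection conditions between auxiliary sublines and invoking Theorem \ref{Res_LinearSetSubline}'' --- is not an argument; it is not even clear which auxiliary sublines are meant, nor why the pairwise-disjoint $2$-spaces $s_P$ inside $U_1+U_2$ should be governed by a single subline rather than by an $O(q^2)$ family (as you note, the naive counts give $q^2+q+1$).

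Two further signs that the sketch cannot close as stated. First, the heuristic ``$q+1$ from $L_Z$ plus at most $q+2$ transversal points'' is inconsistent with the case $d=\dim_{\FF_q}Z=0$: there $L_Z=\emptyset$, so the full budget of $2q+3$ must be absorbed by $T$ alone, which means the bound on $T$ you need is not ``one subline's worth'' but genuinely $d$-dependent and, for $d=0$, roughly twice that; the extremal configurations realising $2q+3$ are precisely of this transversal type, so this is where the constant is decided. Second, the hypothesis $q\geq4$ is never used anywhere in your outline (Theorem \ref{Res_LinearSetSubline} carries no such restriction), yet the statement imposes it; any complete proof must invoke it at some point, which strongly suggests the missing step requires a finer analysis (in \cite{LavrauwVandeVoorde1} this goes through the projective equivalence classification of clubs and scattered rank-$3$ linear sets and a coordinate computation) rather than a soft subline-intersection argument.
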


The following theorem presents the main result of this section.

\begin{thm}\label{Thm_EightHigPigPlanes5}
    There exist eight pairwise disjoint planes of $\pg{5}{q}$ in higgledy-piggledy arrangement.
\end{thm}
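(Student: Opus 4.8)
The plan is to invoke Theorem \ref{Thm_FieldRed} with $\n=5$, $\n'=1$ and $k=2$, so that it suffices to exhibit a point set $\mathcal{P}\subseteq\pg{1}{q^3}$ of size eight that is \emph{not} contained in any $\FF_q$-linear set of rank at most $\n-k=3$. Indeed, since $\mathcal{F}_{2,3,q}$ sends distinct points of $\pg{1}{q^3}$ to distinct (hence pairwise disjoint) elements of the Desarguesian spread $\mathcal{D}_{2,3,q}$, such a $\mathcal{P}$ will yield eight pairwise disjoint planes of $\pg{5}{q}$ in higgledy-piggledy arrangement. So the whole problem reduces to a statement purely inside the projective line $\pg{1}{q^3}$: find eight points meeting every rank-$\le 3$ $\FF_q$-linear set in at most seven points.

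For this I would argue by contradiction: suppose every $8$-subset of the chosen point set lies in some $\FF_q$-linear set of rank at most $3$, and derive a bound on $q$ small enough to be handled by a direct (computer-assisted) check. The key tools are the two imported results of Lavrauw--Vande Voorde. First, a rank-$\le 3$ $\FF_q$-linear set has size at most $q^2+q+1$, and meets any $\FF_q$-subline in at most $q+1$ points by Theorem \ref{Res_LinearSetSubline}; second, by Theorem \ref{Res_LinearSetIntersection}, two distinct such linear sets share at most $2q+3$ points when $q\ge4$. The idea is to pick the eight points so that they are spread across several $\FF_q$-sublines in a controlled way — e.g. choose points $P_1,\dots,P_8$ such that no $q+1$ of them lie on a common $\FF_q$-subline and, more strongly, such that the sublines through various triples of them are forced to be pairwise "generic". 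Concretely, since any three distinct points of $\pg{1}{q^3}$ determine a unique $\FF_q$-subline, I would select the eight points so that the $\binom{8}{3}$ sublines they span are as distinct and transverse as possible; then a single rank-$\le 3$ $\FF_q$-linear set containing all eight would have to contain many of these sublines entirely (whenever it contains $q+1 \le$ enough of the relevant points, it must contain the subline through any $3$ of them that it meets in $\ge 3$ points — careful: it contains the subline only if it meets it in $\ge 3$ points, which is automatic once it contains three of our points on it). Playing off "a rank-$3$ linear set that contains two sublines sharing $\le 2$ points is heavily constrained" against Theorem \ref{Res_LinearSetIntersection} should give the contradiction for all $q$ above a small explicit threshold.

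The main obstacle, I expect, is making the "genericity" of the eight points precise and then verifying that a suitable choice actually exists for every prime power $q$ (not merely for $q$ large). One clean route: take seven of the eight points on a single $\FF_q$-subline $B$ — wait, that is too many, since a scattered linear set could swallow $q+1\ge 7$ points of $B$ once $q\ge 6$; so instead one wants roughly $q$ points maximum from any one subline, which for small $q$ is restrictive. A more robust approach is to fix a small number of sublines $B_1,B_2,B_3$ in "general position" (pairwise meeting in at most one point, with no point common to all three) and distribute the eight points among them, say $3+3+2$, then argue that any rank-$\le3$ $\FF_q$-linear set $\mathcal{L}$ containing all eight must contain all of $B_1$ and all of $B_2$ (since it meets each in $\ge3$ points), hence $|\mathcal{L}|\ge |B_1\cup B_2| = 2(q+1)-|B_1\cap B_2| \ge 2q+1$, and since $\mathcal{L}$ also contains the two further points on $B_3$ outside $B_1\cup B_2$, a short counting argument combined with Theorem \ref{Res_LinearSetIntersection} applied to $\mathcal{L}$ and a scattered linear set through $B_1\cup B_2$ forces $q$ to be tiny. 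The residual small cases $q\in\{2,3,4,5\}$ (and possibly a few more) would then be dispatched by an explicit search, exactly as the paper does for Theorem \ref{Thm_SixHigPigLines4} and Corollary \ref{Crl_SixHigPigPlanes4}; I would cite a code snippet for these. The delicate point throughout is bookkeeping the exact intersection numbers so that the bounds from the two Lavrauw--Vande Voorde theorems actually collide, rather than merely being consistent.
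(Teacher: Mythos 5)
Your overall strategy coincides with the paper's: reduce via Theorem \ref{Thm_FieldRed} to finding eight points of $\pg{1}{q^3}$ contained in no $\FF_q$-linear set of rank at most $3$, exploit Theorems \ref{Res_LinearSetSubline} and \ref{Res_LinearSetIntersection} on a configuration of $\FF_q$-sublines, and dispatch small $q$ by computer. However, the combinatorial core of your argument has a genuine gap. Theorem \ref{Res_LinearSetSubline} explicitly allows an $\FF_q$-linear set to meet an $\FF_q$-subline in exactly $3$ points, so containing three of your marked points on a subline does \emph{not} force the linear set to contain the whole subline; you need \emph{four} marked points on each subline you want it to swallow. This breaks your $3+3+2$ distribution outright: none of the three sublines is forced into $\mathcal{L}$. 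Even after repairing this, your insistence that the sublines ``pairwise meet in at most one point'' cannot work within a budget of eight points: placing four marked points on each of three such sublines costs at least $3\cdot 4-3=9$ points, while retreating to two sublines gives a union of size at most $2(q+1)\leq 2q+3$ (two distinct sublines share at most two points), so Theorem \ref{Res_LinearSetIntersection} no longer yields uniqueness of the containing linear set --- your own bound $|B_1\cup B_2|\geq 2q+1$ already sits below the $2q+3$ threshold, and the auxiliary ``scattered linear set through $B_1\cup B_2$'' you invoke is not known to exist.

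The paper resolves exactly this tension by making the three sublines share \emph{more}, not less: $\mathfrak{b}_1,\mathfrak{b}_2,\mathfrak{b}_3$ pairwise meet in two points and have a common point $C$, so that seven points (namely $C$, the three pairwise intersection points $B_{ij}$, and one further point $D_i$ on each subline) put four marked points on every subline. Any rank-$\leq 3$ linear set through these seven points must then contain $\mathfrak{b}_1\cup\mathfrak{b}_2\cup\mathfrak{b}_3$, which has size $3q-2>2q+3$ once $q\geq 7$, so Theorem \ref{Res_LinearSetIntersection} gives at most one such linear set; an eighth point chosen outside it completes the set, and $q\in\set{2,3,4,5}$ is handled by computer. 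To close your argument you would need to replace the genericity heuristic with this (or an equivalent) explicit configuration and correct the intersection threshold from $3$ to $4$.
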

\begin{proof}
    First of all, the cases $q\in\set{2,3,4,5}$ can be checked by computer (see Code Snippet \ref{Snip_Planes5}); hence assume that $q\geq7$.
    
    Consider an $\FF_q$-subline $\mathfrak{b}_1$ of $\pg{1}{q^3}$ and let $C$, $B_{12}$, $B_{13}$ and $D_1$ be four distinct points lying on this subline.
    Take a point $B_{23}\notin\mathfrak{b}_1$ and define $\mathfrak{b}_2$ to be the unique $\FF_q$-subline containing the points of the set $\set{C,B_{12},B_{23}}$; let $D_2$ be a point of $\mathfrak{b}_2\setminus\set{C,B_{12},B_{23}}$.
    Naturally, $\mathfrak{b}_1\neq\mathfrak{b}_2$ as $B_{23}\notin\mathfrak{b}_1$.
    Finally, denote with $\mathfrak{b}_3$ the unique $\FF_q$-subline containing the points of the set $\set{C,B_{13},B_{23}}$.
    Note that $\mathfrak{b}_3\neq\mathfrak{b}_i$, $i\in\set{1,2}$, as else $\mathfrak{b}_i$ would contain $B_{(3-i)3}$, which would imply that $\mathfrak{b}_1$ and $\mathfrak{b}_2$ share three distinct points and hence would be equal, a contradiction.
    Take a point $D_3\in\mathfrak{b}_3\setminus\set{C,B_{13},B_{23}}$.
    
    In this way, we obtain three distinct $\FF_q$-sublines that pairwise intersect in two points and have the point $C$ in common.
    Define $\mathcal{P}:=\set{C,B_{12},B_{13},B_{23},D_1,D_2,D_3}$.
    By Theorem \ref{Res_LinearSetSubline}, any $\FF_q$-linear set that contains all points of $\mathcal{P}$ has to contain all points of $\mathfrak{b}_1\cup\mathfrak{b}_2\cup\mathfrak{b}_3$, as such an $\FF_q$-linear set contains at least four points of each subline.
    As $|\mathfrak{b}_1\cup\mathfrak{b}_2\cup\mathfrak{b}_3|=3q-2>2q+3$, Theorem \ref{Res_LinearSetIntersection} implies that there exists at most one $\FF_q$-linear set $\mathcal{P}_\textnormal{lin}$ of rank at most $3$ that contains all points of $\mathcal{P}$.
    Choose a point $Q\notin\mathcal{P}_\textnormal{lin}$.
    Then $\mathcal{P}\cup\set{Q}$ is a set of eight points in $\pg{1}{q^3}$ that is not contained in any $\FF_q$-linear set of rank at most $3$.
    Theorem \ref{Thm_FieldRed} finishes the proof.
\end{proof}

\subsubsection*{The optimal but unsolved case of seven planes in higgledy-piggledy arrangement}

Ideally, one would like to find seven planes of $\pg{5}{q}$ in higgledy-piggledy arrangement, as Theorem \ref{Thm_LowerBound} implies that, if $q\geq7$, no higgledy-piggledy set of six planes exists.
In fact, even more can be said of this theoretically smallest higgledy-piggledy plane set.

\begin{prop}
    Let $q\geq7$.
    Then any seven planes of $\pg{5}{q}$ in higgledy-piggledy arrangement are pairwise disjoint.
\end{prop}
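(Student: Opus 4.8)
The plan is to prove the statement by contradiction in a slightly stronger form: if $\mathcal{K}=\{\pi_1,\dots,\pi_7\}$ is a set of seven planes of $\pg{5}{q}$ (with $q\geq 7$) such that two of them have a point in common, then $\mathcal{K}$ is \emph{not} in higgledy-piggledy arrangement. Since $|\mathcal{K}|=7\leq q$, Theorem \ref{Res_CharHigPig} tells us that $\mathcal{K}$ being higgledy-piggledy is equivalent to the non-existence of a plane (an $(\n-k-1)$-subspace for $\n=5$, $k=2$) meeting each of $\pi_1,\dots,\pi_7$. So the whole task reduces to constructing one such \emph{transversal plane} out of the assumption that two members of $\mathcal{K}$ meet.

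The key move is to project from a point shared by two members. Suppose $P$ lies on (at least) two planes of $\mathcal{K}$; say $\pi_1,\dots,\pi_t$ ($t\geq 2$) are the members through $P$ and $\pi_{t+1},\dots,\pi_7$ are the ones avoiding $P$. Projecting $\pg{5}{q}$ from $P$ onto a $\pg{4}{q}$, the $7-t\leq 5$ planes avoiding $P$ map to planes $\overline{\pi}_{t+1},\dots,\overline{\pi}_7$ of $\pg{4}{q}$, while a plane $\tau$ of $\pg{5}{q}$ through $P$ corresponds to a line $\overline{\tau}$ of $\pg{4}{q}$; one checks the dictionary ``$\tau$ meets $\pi_i$ $\iff$ $\overline{\tau}$ meets $\overline{\pi}_i$'' for $i>t$, whereas $\tau$ automatically meets $\pi_1,\dots,\pi_t$ at $P$. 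Hence it suffices to establish the auxiliary fact that \emph{any at most five planes of $\pg{4}{q}$ have a common transversal line}, and then pull such a line back through $P$ to a plane of $\pg{5}{q}$ meeting all seven members of $\mathcal{K}$ — the desired contradiction.

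I would prove the auxiliary fact by an elementary, $\FF_q$-rational dimension count rather than by a Grassmannian argument. Choose a point $A\in\overline{\pi}_{t+1}\cap\overline{\pi}_{t+2}$, which exists because two planes of $\pg{4}{q}$ always meet. For each of the at most three remaining planes $\overline{\pi}_j$ not already containing $A$, the span $\vspan{A,\overline{\pi}_j}$ is a $3$-space, so \emph{every} line through $A$ lying inside it meets $\overline{\pi}_j$. The intersection of these (at most three) $3$-spaces has codimension at most $3$ in $\pg{4}{q}$, hence dimension at least $1$, and it contains $A$; therefore it contains a line $\overline{\tau}$ through $A$, and this line meets each $\overline{\pi}_j$ (the ones avoiding $A$ because $\overline{\tau}\subseteq\vspan{A,\overline{\pi}_j}$, the others and $\overline{\pi}_{t+1},\overline{\pi}_{t+2}$ because $A\in\overline{\tau}$). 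The degenerate situations — $t\in\{6,7\}$, coincident projected planes, or $A$ happening to lie on further $\overline{\pi}_j$ — only simplify the count. Pulling $\overline{\tau}$ back yields a plane of $\pg{5}{q}$ meeting all seven members of $\mathcal{K}$, contradicting Theorem \ref{Res_CharHigPig}; thus no two members of a higgledy-piggledy set of seven planes of $\pg{5}{q}$ can meet.

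I do not expect a deep obstacle here, only a bookkeeping one: the part needing the most care is verifying the projection dictionary (that, for a plane through $P$, meeting $\pi_i$ downstairs is equivalent to the projected line meeting $\overline{\pi}_i$) together with checking the transversal-line construction in all the degenerate overlap cases. The hypothesis $q\geq 7$ is used only to guarantee $7\leq q$ so that Theorem \ref{Res_CharHigPig} applies; the rest is pure incidence geometry. (As a side remark, the same projection-and-transversal argument shows more generally that no two planes of a higgledy-piggledy set of at most $q$ planes of $\pg{5}{q}$ that has at most seven members can meet, which dovetails with the fact that by Theorem \ref{Thm_LowerBound} a septuple is the theoretically smallest possibility when $q\geq 7$.)
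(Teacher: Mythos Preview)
Your argument is correct and complete. Both your proof and the paper's proceed by contradiction via Theorem~\ref{Res_CharHigPig}: assuming two members of $\mathcal{K}$ meet, one constructs a plane of $\pg{5}{q}$ intersecting all seven. The constructions, however, are different.

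The paper works entirely inside a hyperplane $\Sigma$ containing two intersecting members $\pi_1,\pi_2$, and builds a descending chain of subspaces: it picks lines $\ell_3\subseteq\pi_3\cap\Sigma$ and $\ell_4\subseteq\pi_4\cap\Sigma$, a solid $\Pi\subseteq\Sigma$ containing $\vspan{\ell_3,\ell_4}$, and points $P_i\in\Pi\cap\pi_i$ for $i\in\{5,6,7\}$; any plane of $\Pi$ through $\vspan{P_5,P_6,P_7}$ is then transversal to all seven. Your approach instead projects from a shared point $P$, reducing the problem to showing that at most five planes of $\pg{4}{q}$ admit a common transversal line, which you settle by intersecting at most three hyperplanes of the form $\vspan{A,\overline{\pi}_j}$ through a point $A$ common to two of the projected planes. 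Both arguments are of comparable length and both use $q\geq7$ only to make $|\mathcal{K}|\leq q$ so that Theorem~\ref{Res_CharHigPig} applies. Your projection viewpoint makes the generalisation in your side remark (to any set of at most $\min\{7,q\}$ planes) immediate and connects naturally to Corollary~\ref{Crl_Projection}; the paper's nested-subspace construction is slightly more self-contained in that it avoids having to verify the projection dictionary.
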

\begin{proof}
    Let $\mathcal{K}:=\set{\pi_1,\pi_2,\dots,\pi_7}$ be the higgledy-piggledy set in question and suppose to the contrary (and w.l.o.g.) that there exists a hyperplane $\Sigma$ containing $\pi_1$ and $\pi_2$.
    Define $\ell_3$ and $\ell_4$ to be lines contained in $\pi_3\cap\Sigma$ and $\pi_4\cap\Sigma$, respectively, and let $\Pi$ be a solid in $\Sigma$ that contains $\vspan{\ell_3,\ell_4}$; choose a point $P_i$ in $\Pi\cap\pi_i$ for every $i\in\set{5,6,7}$.
    Then any plane $\pi\subseteq\Pi$ that contains $\vspan{P_5,P_6,P_7}$ naturally contains a point of $\pi_5$, $\pi_6$ and $\pi_7$.
    Moreover, as $\pi$ is contained in $\Pi\supseteq\ell_3,\ell_4$, this plane intersects $\pi_3$ and $\pi_4$ as well.
    Finally, as $\pi\subseteq\Sigma$, we conclude that $\pi$ meets all planes of $\mathcal{K}$, contradicting Theorem \ref{Res_CharHigPig}.
\end{proof}

Although the question whether a higgledy-piggledy plane set of size seven exists is still open, the observation above may hint us to try finding such a set as part of a Desarguesian spread, mimicking the strategy of proving Theorem \ref{Thm_EightHigPigPlanes5}.
One could indeed improve Theorem \ref{Thm_EightHigPigPlanes5} if there would exist three distinct $\FF_q$-sublines of $\pg{1}{q^3}$ that pairwise intersect in two points but have no point in common (implying the existence of a point set in $\pg{1}{q^3}$ of seven points not contained in any $\FF_q$-linear set of rank at most $3$).
The answer to this question of existence is, unfortunately, negative.

\begin{thm}\label{Thm_SpecialSublines}
    Let $m\in\NN\setminus\set{0}$ and $q\neq2$.
    Then there exist three distinct $\FF_q$-sublines $\mathfrak{b}_1$, $\mathfrak{b}_2$ and $\mathfrak{b}_3$ of $\pg{1}{q^m}$ with the property that
    \begin{enumerate}
        \item $|\mathfrak{b}_i\cap\mathfrak{b}_j|=2$ for every $i\neq j$, and
        \item $\mathfrak{b}_1\cap\mathfrak{b}_2\cap\mathfrak{b}_3=\emptyset$,
    \end{enumerate}
    if and only if $m$ is even.
\end{thm}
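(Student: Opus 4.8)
The plan is to exploit the sharply $3$-transitive action of $\mathrm{PGL}(2,q^m)$ on $\pg{1}{q^m}$, together with the fact recorded above that three distinct points determine a unique $\FF_q$-subline; consequently two distinct $\FF_q$-sublines meet in at most two points, and $\mathrm{PGL}(2,q^m)$ maps $\FF_q$-sublines to $\FF_q$-sublines. In particular, condition (1) will be equivalent to asking that each of the three pairs of sublines meet in \emph{at least} two points.

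\emph{The ``only if'' direction.} Suppose $\mathfrak{b}_1,\mathfrak{b}_2,\mathfrak{b}_3$ as in the statement exist. First I would note that the six points occurring in the three pairwise intersections are pairwise distinct: a point lying in $\mathfrak{b}_1\cap\mathfrak{b}_2$ and in $\mathfrak{b}_1\cap\mathfrak{b}_3$ (say) would lie on all three sublines, contradicting (2). Using sharp $3$-transitivity, I would normalise so that $\mathfrak{b}_1\cap\mathfrak{b}_2=\set{0,\infty}$ and $1\in\mathfrak{b}_1\cap\mathfrak{b}_3$. Then $\mathfrak{b}_1=\FF_q\cup\set{\infty}$; $\mathfrak{b}_2$, being the subline through $0$, $\infty$ and one further point, equals $b\FF_q\cup\set{\infty}$ for some $b\in\FF_{q^m}\setminus\FF_q$; the second point of $\mathfrak{b}_1\cap\mathfrak{b}_3$ is some $\beta\in\FF_q\setminus\set{0,1}$; and, after replacing $b$ by a suitable $\FF_q^*$-multiple, $\mathfrak{b}_2\cap\mathfrak{b}_3=\set{b,\gamma b}$ with $\gamma\in\FF_q^*\setminus\set{1}$. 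Since $\mathfrak{b}_3$ is the unique subline through $1,\beta,b$ and also contains $\gamma b$, expanding the cross-ratio condition $\mathrm{cr}(1,\beta;b,\gamma b)\in\FF_q$ yields a quadratic equation in $b$ with coefficients in $\FF_q$; a short computation shows that, precisely because $\beta,\gamma\notin\set{0,1}$, these coefficients cannot all vanish, so the equation has degree exactly two. Hence $[\FF_q(b):\FF_q]=2$, and as $\FF_q(b)\subseteq\FF_{q^m}$ this forces $2\mid m$.

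\emph{The ``if'' direction.} Assume $2\mid m$, so $\FF_{q^2}\subseteq\FF_{q^m}$; since every $\FF_q$-subline of $\pg{1}{q^2}$ (a Baer subline, see Definition \ref{Def_Subline}) is in particular an $\FF_q$-subline of $\pg{1}{q^m}$, it suffices to produce the configuration inside $\pg{1}{q^2}$. Write $\FF_{q^2}=\FF_q(\theta)$ with $\theta^2=a\theta+b$, $a,b\in\FF_q$, $b\neq0$, and set $\mathfrak{b}_1:=\FF_q\cup\set{\infty}$, $\mathfrak{b}_2:=\theta\FF_q\cup\set{\infty}$, so that $\mathfrak{b}_1\cap\mathfrak{b}_2=\set{0,\infty}$. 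I would then choose $u\in\FF_q^*$ with $u\neq1$ and $bu\neq-1$ (for $q\geq4$ any $u$ outside two forbidden values works; for $q=3$ one takes $\theta^2=-1$, making $u=2$ admissible), put $s:=-bu\in\FF_q^*\setminus\set{1}$, and let $\mathfrak{b}_3$ be the unique subline through $1,\theta,s$. A direct evaluation of $\mathrm{cr}(1,\theta;s,\theta u)$ — in which the choice $s=-bu$ is exactly what cancels the $\FF_q$-part of the relevant denominator — shows this cross-ratio lies in $\FF_q^*$, hence $\theta u\in\mathfrak{b}_3$. Consequently $\mathfrak{b}_3$ meets $\mathfrak{b}_1$ in $\set{1,s}$ and $\mathfrak{b}_2$ in $\set{\theta,\theta u}$, each of size exactly two since distinct sublines meet in at most two points, while $0,\infty\notin\mathfrak{b}_3$ because $s\neq0$; thus $\mathfrak{b}_1,\mathfrak{b}_2,\mathfrak{b}_3$ are distinct, satisfy (1), and $\mathfrak{b}_1\cap\mathfrak{b}_2\cap\mathfrak{b}_3=\emptyset$.

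I expect the genuinely fiddly part to be the bookkeeping in the ``if'' direction: one must ensure that $0,\infty,1,\theta,s,\theta u$ sit in sufficiently general position \emph{uniformly in $q$}, which is exactly where the hypothesis $q\neq2$ enters, with the small case $q=3$ needing the minor extra care of picking $\theta$ with $\theta^2=-1$. By contrast, the ``only if'' direction should be routine once the normalisation is in place, the only delicate point being the non-degeneracy of the quadratic satisfied by $b$.
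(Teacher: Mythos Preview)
Your proposal is correct, and the ``only if'' direction follows essentially the same line as the paper: normalise via the $3$-transitive action of $\mathrm{PGL}(2,q^m)$, parametrise the three sublines, and extract from the double intersection $\mathfrak{b}_2\cap\mathfrak{b}_3$ a genuine quadratic relation over $\FF_q$ satisfied by an element outside $\FF_q$. You phrase this via the cross-ratio $\mathrm{cr}(1,\beta;b,\gamma b)\in\FF_q$, whereas the paper writes out two explicit coordinate equalities and eliminates an auxiliary parameter, but the algebra and the non-degeneracy check (your $\beta,\gamma\neq 0,1$ versus the paper's $a_0\neq 0,1$ and $(b_1,c_1)\neq(b_2,c_2)$) match up.

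The ``if'' direction, however, is handled quite differently. The paper invokes the classical model identifying $\pg{1}{q^2}$ with the elliptic quadric $\mathcal{Q}^-(3,q)$, where $\FF_q$-sublines become plane sections; it then produces three suitable planes by a pigeonhole argument on tangent lines through a fixed $2$-secant. Your argument is purely algebraic and fully explicit: you write down $\mathfrak{b}_1=\FF_q\cup\{\infty\}$, $\mathfrak{b}_2=\theta\FF_q\cup\{\infty\}$, and manufacture $\mathfrak{b}_3$ through $1,s,\theta$ so that a well-chosen relation $s=-bu$ forces $\theta u\in\mathfrak{b}_3$. The trade-off is clear: the paper's approach gives geometric intuition and makes the role of $q\neq2$ transparent (one literally runs out of secants), while your approach is more self-contained, avoids importing the quadric model, and yields concrete coordinates for the three sublines. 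Your handling of the small case $q=3$ by choosing $\theta^2=-1$ is a little more ad hoc than the paper's uniform pigeonhole, but it is perfectly valid. One small remark: your justification ``$0,\infty\notin\mathfrak{b}_3$ because $s\neq0$'' is too terse --- you also need $s\neq1$ (which you have, via $bu\neq-1$) together with $\theta\notin\FF_q$; this is implicit in your set-up but worth spelling out.
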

\begin{proof}
    First, suppose that three such $\FF_q$-sublines of $\pg{1}{q^m}$ do exist.
    Choose a coordinate system for the projective line and let $P_{01}$, $P_{10}$ and $P_{11}$ be the points corresponding to the coordinates $(0,1)$, $(1,0)$ and $(1,1)$, respectively.
    Without loss of generality, we may assume that $\set{P_{01},P_{10},P_{11}}\subseteq\mathfrak{b}_1$, $\set{P_{01},P_{10}}\subseteq\mathfrak{b}_2$ and $P_{11}\in\mathfrak{b}_3$, the first assumption implying that all points of $\mathfrak{b}_1$ correspond to the set of coordinates $\set{(0,1)}\cup\sett{(1,a)}{a\in\FF_q}$.
    
    By considering an element of $\textnormal{PGL}(2,q)$ that maps three distinct point of $\mathfrak{b}_1$ (e.g.\ take $P_{01}$, $P_{10}$ and $P_{11}$) onto three distinct points of $\mathfrak{b}_i$, $i\in\set{2,3}$, one can find the set of coordinates corresponding to the points on the subline $\mathfrak{b}_i$.
    In this way, we know that there exists an $\alpha\in\FF_{q^m}\setminus\FF_q$ such that the points of $\mathfrak{b}_2$ correspond to the set of coordinates
    \begin{equation}\label{Eq_SecondSubline}
        \set{(0,1)}\cup\sett{(1,b\alpha)}{b\in\FF_q}\textnormal{.}
    \end{equation}
    Suppose that the unique point $P_{1a_0}\in(\mathfrak{b}_1\cap\mathfrak{b}_3)\setminus\set{P_{11}}$ has coordinates $(1,a_0)$, $a_0\in\FF_q\setminus\set{0,1}$.
    Then, analogously, there exists a $\beta\in\FF_{q^m}\setminus\FF_q$ such that the points of $\mathfrak{b}_3$ correspond to the set of coordinates
    \begin{equation}\label{Eq_ThirdSubline}
        \set{(1,1)}\cup\sett{(c+\beta,c+a_0\beta)}{c\in\FF_q}\textnormal{.}
    \end{equation}
    By the given properties of these three sublines, there should exist two points, \emph{not equal to $P_{10}$ or $P_{1a_0}$}, with coordinates contained in both sets \eqref{Eq_SecondSubline} and \eqref{Eq_ThirdSubline}.
    As $c+\beta\neq0$ for any $c\in\FF_q$, this is equivalent with stating that the equality
    \begin{equation}\label{Eq_Equality}
        b\alpha=\frac{c+a_0\beta}{c+\beta}
    \end{equation}
    has two distinct solutions $(b_1,c_1)$ and $(b_2,c_2)$, with $b_1,b_2,c_1,c_2\in\FF_q\setminus\set{0}$.
    Plugging each solution into \eqref{Eq_Equality} and solving for $\beta$, we obtain that $c_1\frac{b_1\alpha-1}{a_0-b_1\alpha}=\beta=c_2\frac{b_2\alpha-1}{a_0-b_2\alpha}$, which expands to
    \begin{equation}\label{Eq_AlphaIsDependant}
        b_1b_2(c_1-c_2)\alpha^2=\big(a_0(b_1c_1-b_2c_2)+b_2c_1-b_1c_2\big)\alpha+a_0(c_2-c_1)\textnormal{.}
    \end{equation}
    The elements $b_1$ and $b_2$ are non-zero.
    If $(c_1-c_2)$ would be zero, then, as $c_1,c_2\neq0$, expression \eqref{Eq_AlphaIsDependant} reduces to
    \[
        (a_0-1)(b_1-b_2)=0\textnormal{,}
    \]
    which cannot be valid as $a_0\in\FF_q\setminus\set{0,1}$ and $(b_1,c_1)\neq(b_2,c_2)$, leading to a contradiction.
    In conclusion, expression \eqref{Eq_AlphaIsDependant} implies that $\FF_q[\alpha]$ is the subfield of $\FF_{q^m}$ isomorphic to $\FF_{q^2}$, which can only be true if $m$ is even.
    
    Conversely, assume $m$ even.
    Consider a point set of $\pg{1}{q^m}$ isomorphic to $\pg{1}{q^2}$.
    Then this point set, together with all $\FF_q$-sublines of $\pg{1}{q^2}$, can be identified as the point set of an irreducible elliptic quadric $\mathcal{Q}^-(3,q)$ of $\pg{3}{q}$, together with its non-tangent plane intersections\footnote{This is a particular ovoidal circle geometry called a \emph{M\"obius plane}.} \cite[Lemma $17.1.5$]{Hirschfeld}.
    
    Take a line $s_0$ intersecting $\mathcal{Q}^-(3,q)$ in two points $Q_1$ and $Q_2$, and consider the $q+1$ planes through $s_0$.
    Each such plane $\pi$ intersects $\mathcal{Q}^-(3,q)$ in an irreducible conic $\mathcal{C}_\pi$, and there always exists a point $S_\pi\in s_0\setminus\set{Q_1,Q_2}$ that lies on at most one tangent line to that conic.
    By the pigeon hole principle, we can choose two planes $\pi_1$ and $\pi_2$ through $s_0$ such that there exists a point $S\in s_0\setminus\set{Q_1,Q_2}$ that lies on at most one tangent line to both $\mathcal{C}_{\pi_1}$ and $\mathcal{C}_{\pi_2}$ (if $q=3$ then $S$ necessarily lies on no tangent lines to both $\mathcal{C}_{\pi_1}$ and $\mathcal{C}_{\pi_2}$).
    As $q\geq3$, we can now take a line $s_i\neq s_0$ through $S$ that intersects $\mathcal{C}_{\pi_i}$ in two points, $i\in\set{1,2}$; define $\pi_3:=\vspan{s_1,s_2}$.
    One can easily check that the $\FF_q$-sublines of $\pg{1}{q^2}$ corresponding to the intersections of $\pi_1$, $\pi_2$ and $\pi_3$ with $\mathcal{Q}^-(3,q)$ meet the requirements.
\end{proof}

The above theorem does not, however, eliminate the chance of finding seven planes of $\pg{5}{q}$ in higgledy-piggledy arrangement using the field reduction method.
Computer-assisted searches confirm that such small subsets of a Desarguesian $2$-spread exist for $q\in\set{2,3,4,5,7}$ (Code Snippet \ref{Snip_Planes5}).
Hence, we carefully suspect that there generally exist seven planes in higgledy-piggledy arrangement as part of a Desarguesian spread of $\pg{5}{q}$.

\begin{prob}
    Prove that there exist seven planes of $\pg{5}{q}$ in higgledy-piggledy arrangement.
\end{prob}

Once we get a better grasp on the structure of all $\FF_q$-linear sets in $\pg{1}{q^3}$, the above open problem might be solvable using Theorem \ref{Thm_FieldRed}.

\section{Minimal codes, covering codes and resolving sets}\label{Sect_Fruits}

This section is aimed to briefly discuss the applications of the results of Section \ref{Sect_HigPig4} and \ref{Sect_HigPig5} to coding and graph theory.
Let $\len\in\NN$ and $k,\red\in\set{0,1,\dots,\len}$.
Any subspace of $\FF_q^\len$ of dimension $k=\len-\red$ or codimension (redundancy) $\red$ is said to be a \emph{($q$-ary) linear code} of length $\len$ and dimension $k$, and will be called a \emph{linear $[\len,k]_q$-code}.
Elements of such a code are called \emph{codewords}; the \emph{support} of a codeword is the set of indices in which the codeword has non-zero entries.
A code is called \emph{non-degenerate} if each index $i$ is contained in at least one support of a codeword.
See e.g.\ \cite{AssmusKey} for an introduction into the topic of coding theory.

\subsubsection*{Short minimal linear codes of dimension $\boldsymbol{5}$}

The new results of Section \ref{Sect_HigPig4} can be directly translated to upper bounds on the length of certain minimal codes.

\begin{df}
    A codeword of a linear code is called \emph{minimal} if its support contains no support of any other codeword except for its scalar multiples.
    A linear code is \emph{minimal} if all its codewords are minimal.
\end{df}

The authors of \cite{AlfaranoBorelloNeri,TangQiuLiaoZhou} independently proved that minimal codes have a one-to-one correspondence to strong blocking sets (alternatively called cutting blocking sets).
Recently, this correspondence was reproven geometrically in \cite[Corollary $3.3$]{HegerNagy}.

\begin{thm}[{\cite[Theorem $3.4$, Theorem $14$]{AlfaranoBorelloNeri,TangQiuLiaoZhou}}]\label{Res_CharMinimalCodes}
    Let $\mathcal{C}$ be a non-degenerate linear $[\len,k]_q$ code with generator matrix $G=(G_1,\dots,G_\len)$.
    Let $\Strong=\set{G_1,\dots,G_\len}$ be the corresponding point set of $\pg{k-1}{q}$.
    Then $\mathcal{C}$ is a minimal code if and only if $\Strong$ is a strong blocking set.
\end{thm}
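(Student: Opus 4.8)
The plan is to build an explicit dictionary between the codewords of $\mathcal{C}$ and the hyperplanes of $\pg{k-1}{q}$, and then to recognise both ``$\mathcal{C}$ is minimal'' and ``$\Strong$ is a strong blocking set'' as the same condition phrased through this dictionary.

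First I would fix the correspondence. Since $G$ is a generator matrix of an $[\len,k]_q$-code it has full row rank $k$, so its columns $G_1,\dots,G_\len$ span $\FF_q^k$ and, by non-degeneracy, are all non-zero; hence $\Strong=\set{G_1,\dots,G_\len}$ is a well-defined point set of $\pg{k-1}{q}$. Every codeword of $\mathcal{C}$ equals $vG$ for a unique $v\in\FF_q^k$, with $i$-th coordinate $v\cdot G_i$, so the complement of $\operatorname{supp}(vG)$ inside $\set{1,\dots,\len}$ consists precisely of the indices $i$ with $G_i$ on the hyperplane $H_v:=\sett{x}{v\cdot x=0}$ of $\pg{k-1}{q}$. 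As $v$ ranges over the non-zero vectors, $H_v$ ranges over all hyperplanes, and $H_v=H_{v'}$ exactly when $v,v'$ are non-zero scalar multiples, i.e.\ when $vG,v'G$ are scalar multiples. Passing to complements inside $\set{1,\dots,\len}$ then gives, for non-zero $v,v'$,
\[
    \operatorname{supp}(v'G)\subseteq\operatorname{supp}(vG)\quad\Longleftrightarrow\quad\Strong\cap H_v\subseteq\Strong\cap H_{v'}\textnormal{.}
\]

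Next I would reformulate minimality. By definition a non-zero codeword $vG$ is non-minimal exactly when some codeword $v'G$ that is not a scalar multiple of $vG$ satisfies $\operatorname{supp}(v'G)\subseteq\operatorname{supp}(vG)$ (the zero codeword, a scalar multiple of everything, plays no role). By the displayed equivalence this says exactly that there is a hyperplane $H'\neq H_v$ with $\Strong\cap H_v\subseteq\Strong\cap H'$. Hence $\mathcal{C}$ is minimal if and only if $\Strong\cap H\not\subseteq\Strong\cap H'$ for every pair of distinct hyperplanes $H,H'$ of $\pg{k-1}{q}$. Finally I would check this is the same as $\vspan{\Strong\cap H}=H$ for all hyperplanes $H$, i.e.\ that $\Strong$ is a strong blocking set: if $\Strong\cap H$ spans $H$, then $\Strong\cap H\subseteq\Strong\cap H'\subseteq H'$ forces $H=\vspan{\Strong\cap H}\subseteq H'$, whence $H=H'$; conversely, if $\vspan{\Strong\cap H}=:U$ is a proper subspace of $H$, then $U$ has codimension at least $2$ in $\pg{k-1}{q}$, hence lies on at least $q+1\geq 2$ hyperplanes, so there is an $H'\neq H$ through $U$, and $\Strong\cap H\subseteq U\subseteq H'$ is a forbidden containment (this also covers the degenerate case $\Strong\cap H=\emptyset$). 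Chaining the three equivalences yields the theorem.

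Once the dictionary ``codeword $\leftrightarrow$ hyperplane'' is in place the argument is essentially formal, so I do not expect a real obstacle; the only points that need care are the bookkeeping around scalar multiples (and the harmless zero codeword) in the definition of minimality, and the elementary projective fact that a proper subspace of a hyperplane lies on a further hyperplane, which is exactly what equates ``$\Strong\cap H$ spans $H$'' with ``no containment $\Strong\cap H\subseteq\Strong\cap H'$''.
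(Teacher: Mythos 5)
Your proposal is correct, and the dictionary you set up (complement of a codeword's support $\leftrightarrow$ hyperplane section of $\Strong$, minimality $\leftrightarrow$ no containments among hyperplane sections $\leftrightarrow$ each section spans its hyperplane) is exactly the standard argument behind this result; the paper itself only cites it from \cite{AlfaranoBorelloNeri,TangQiuLiaoZhou} (and its geometric reproof in \cite{HegerNagy}) without giving a proof. The points needing care — scalar multiples, the zero codeword, and the fact that a proper subspace of a hyperplane lies on a second hyperplane — are all handled correctly.
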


As one is generally interested in the smallest possible length of minimal $q$-ary linear $[\len,k]_q$ codes for fixed parameters $k$ and $q$, one defines $m(k,q)$ to be the smallest possible length of such a code.
The following theorem bundles all relevant known results concerning the case of $k=5$.

\begin{thm}[\cite{AlfaranoBorelloNeriRavagnani,BartoliKissMarcuginiPambianco,FancsaliSziklai1}]\label{Res_FruitMinimal}
    \[
        4q+4\leq m(5,q)\leq\begin{cases}6q+6\quad&\textnormal{if }q>36086\textnormal{ and }2,3\nmid q\textnormal{,}\\
        7q+7\quad&\textnormal{if }q\geq7\textnormal{,}\\
        8q-3\textnormal{.}&\end{cases}
    \]
\end{thm}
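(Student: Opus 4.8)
The plan is to recast the statement as one about strong blocking sets of $\pg{4}{q}$ and then to read off each bound from results quoted earlier, supplemented by the single external lower bound on the length of minimal codes.

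\textbf{Reduction.} By Theorem \ref{Res_CharMinimalCodes}, a non-degenerate linear $[\len,5]_q$ code is minimal if and only if the point set of $\pg{4}{q}$ given by its generator-matrix columns is a strong blocking set. Deleting a column shortens the code and repeating a column never shortens it, so $m(5,q)$ is precisely the smallest size of a strong blocking set of $\pg{4}{q}$; all four inequalities then concern this one quantity.

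\textbf{Lower bound.} The inequality $m(5,q)\geq 4q+4$ is the case $k=5$ of the general bound $m(k,q)\geq(k-1)(q+1)$ on minimal $[\len,k]_q$ codes established in \cite{AlfaranoBorelloNeriRavagnani}. (Double counting incident point--solid pairs, using that a strong blocking set of $\pg{4}{q}$ meets every solid in at least $4$ points, already gives $m(5,q)>4q$; the exact additive constant $4$ is what the cited reference supplies, and for $q=2$ it is also shown there that $m(5,2)=13$.)

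\textbf{Upper bounds.} Each of the three upper bounds is realised by taking the union of the lines of a suitable higgledy-piggledy line set of $\pg{4}{q}$, which is a strong blocking set by definition; when those lines are pairwise disjoint this union has exactly $(\#\text{lines})\cdot(q+1)$ points. Theorem \ref{Res_SporadicExamples}($2.$) supplies six pairwise disjoint higgledy-piggledy lines of $\pg{4}{q}$ whenever $q>36086$ and $2,3\nmid q$, whence $m(5,q)\leq 6(q+1)=6q+6$ in that range; Theorem \ref{Res_HigPigExistence} applied with $\n=4$ supplies $2\n-1=7$ pairwise disjoint such lines as soon as $q\geq 2\n-1=7$, whence $m(5,q)\leq 7(q+1)=7q+7$ for $q\geq 7$; and the unconditional bound $m(5,q)\leq 8q-3$ follows from the explicit strong blocking set of $\pg{4}{q}$ of size $8q-3$ constructed in \cite{FancsaliSziklai1}, which carries no hypothesis on $q$.

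\textbf{Main obstacle.} Granted the reduction via Theorem \ref{Res_CharMinimalCodes}, nothing here is deep: each bound is an immediate consequence of an already-stated result, using that the union of a higgledy-piggledy line set of $\pg{4}{q}$ is automatically a strong blocking set. The only step genuinely relying on input from outside the present work is the third upper bound, where one must verify that the construction of \cite{FancsaliSziklai1} yields a strong blocking set of $\pg{4}{q}$ of size precisely $8q-3$ for every prime power $q$ and transcribe it into the current notation.
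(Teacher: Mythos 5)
Your argument is correct and follows essentially the same route as the paper: reduce to strong blocking sets of $\pg{4}{q}$ via Theorem \ref{Res_CharMinimalCodes}, take the lower bound from \cite{AlfaranoBorelloNeriRavagnani}, and obtain the first two upper bounds as the unions of the six, respectively seven, pairwise disjoint higgledy-piggledy lines of Theorem \ref{Res_SporadicExamples}($2.$) and Theorem \ref{Res_HigPigExistence}. The only discrepancy is bibliographic: the paper attributes the unconditional bound $8q-3$ to Construction $2$ of \cite{AlfaranoBorelloNeriRavagnani} (eight lines of $\pg{4}{q}$ in higgledy-piggledy arrangement whose union has $8q-3$ points), not to \cite{FancsaliSziklai1}.
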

\begin{proof}
    The lower bound on $m(5,q)$ is proven in \cite[Theorem $2.14$]{AlfaranoBorelloNeriRavagnani} for general $k$, and reproven geometrically in \cite[Theorem $3.9$]{HegerNagy}.
    This lower bound can in fact be improved by $1$ if $q\geq9$ (\cite[Corollary $2.19$]{AlfaranoBorelloNeriRavagnani}).
    
    The first two upper bounds arise by combining respectively Theorem \ref{Res_SporadicExamples}($2.$) and Theorem \ref{Res_HigPigExistence} with Theorem \ref{Res_CharMinimalCodes}.
    The third upper bound is proven in \cite[Construction $2$]{AlfaranoBorelloNeriRavagnani}; in this work, the authors prove the existence of eight lines of $\pg{4}{q}$ in higgledy-piggledy arrangement.
\end{proof}

Furthermore, the authors of \cite{AlfaranoBorelloNeriRavagnani} computationally proved that $m(5,2)=13$ and $m(5,3)\leq20$.
Our result concerning this topic comes down to the following.

\begin{thm}\label{Thm_FruitMinimal}
    $m(5,q)\leq6q+5$.
\end{thm}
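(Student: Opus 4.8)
The plan is to combine the paper's new geometric result with the standard correspondence between minimal codes and strong blocking sets. By Theorem \ref{Thm_SixHigPigLines4}, there exist six lines of $\pg{4}{q}$ in higgledy-piggledy arrangement, two of which intersect in a point. The union of the points on these six lines is, by definition, a strong blocking set of $\pg{4}{q}$. Since two of the lines share a point, the total number of points covered is $6(q+1) - 1 = 6q+5$ rather than $6(q+1)$; this is precisely where the ``two of which intersect'' refinement pays off numerically.

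First I would make the point count explicit: each projective line of $\pg{4}{q}$ contains $q+1$ points, six pairwise disjoint lines would cover $6q+6$ points, but the single coincidence point is counted twice, so the strong blocking set $\Strong$ arising from Theorem \ref{Thm_SixHigPigLines4} has $|\Strong| = 6q+5$. Next I would invoke Theorem \ref{Res_CharMinimalCodes}: take the points of $\Strong$ as the columns $G_1,\dots,G_{6q+5}$ of a generator matrix $G$ of a linear code $\mathcal{C}$ of length $6q+5$ over $\FF_q$ in the ambient space $\FF_q^5$ (so $k=5$, since the points live in $\pg{4}{q}$). One should check $\mathcal{C}$ is non-degenerate (each index appears in some codeword support) and that $G$ has rank $5$ — both are immediate because $\Strong$, being a strong blocking set of $\pg{4}{q}$, spans the whole space and in particular is not contained in any hyperplane. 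Then Theorem \ref{Res_CharMinimalCodes} tells us $\mathcal{C}$ is a minimal $[6q+5,5]_q$ code, whence $m(5,q) \leq 6q+5$.

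The only genuine subtlety — and really the only thing to be careful about — is the small-$q$ bookkeeping: Theorem \ref{Thm_SixHigPigLines4} already handles all prime powers $q$ (with $q=2$ dispatched by a computer check cited there), so no separate argument for small $q$ is needed here; the inequality $m(5,q)\leq 6q+5$ holds uniformly. I would simply remark that this improves the bound $m(5,q)\leq 6q+6$ implicit in combining the disjoint-six-lines results with Theorem \ref{Res_CharMinimalCodes}, and for $q>36086$ with $2,3\nmid q$ it is of course weaker than the $6q+6$ bound only in the sense that... no, in fact $6q+5 < 6q+6$, so it is an improvement across the board over the generic six-line constructions, and the whole content of the theorem is this single-point saving. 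I do not expect any real obstacle; the proof is a one-line deduction once Theorem \ref{Thm_SixHigPigLines4} is in hand, and the write-up is essentially: count the points, cite Theorem \ref{Res_CharMinimalCodes}.

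\begin{proof}
By Theorem \ref{Thm_SixHigPigLines4}, there exist six lines of $\pg{4}{q}$ in higgledy-piggledy arrangement, two of which intersect in a point. The set $\Strong$ of all points lying on at least one of these lines is therefore a strong blocking set of $\pg{4}{q}$. As four of the lines are pairwise disjoint from each other and from the two intersecting lines, and the two intersecting lines share exactly one point, we have
\[
    |\Strong| = 6(q+1) - 1 = 6q+5\textnormal{.}
\]
Let $G = (G_1,\dots,G_{6q+5})$ be a $5\times(6q+5)$ matrix over $\FF_q$ whose columns represent the points of $\Strong$, and let $\mathcal{C}$ be the linear $[6q+5,5]_q$ code generated by $G$. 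Since $\Strong$ is a strong blocking set, it spans $\pg{4}{q}$, so $G$ has rank $5$ and $\mathcal{C}$ is non-degenerate. By Theorem \ref{Res_CharMinimalCodes}, $\mathcal{C}$ is a minimal code. Hence $m(5,q) \leq 6q+5$.
\end{proof}
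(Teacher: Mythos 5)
Your proof is correct and follows exactly the same route as the paper, which simply cites Theorem \ref{Thm_SixHigPigLines4} together with Theorem \ref{Res_CharMinimalCodes}; your version just makes the point count $6(q+1)-1=6q+5$ and the non-degeneracy/rank check explicit. No issues.
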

\begin{proof}
    Directly from Theorem \ref{Thm_SixHigPigLines4} and \ref{Res_CharMinimalCodes}.
\end{proof}

It's easy to check that Theorem \ref{Thm_FruitMinimal} improves the existing upper bounds on $m(5,q)$ for all $q\geq5$.

\subsubsection*{Short covering codes of codimension $\boldsymbol{5}$ with covering radius $\boldsymbol{4}$}

Existence results on (small) strong $k$-blocking sets lead to existence results on (small) \emph{$(\n-k)$-saturating sets}, which in turn imply existence results on (short) \emph{covering codes}.
Allow us to introduce these notions.

\begin{df}
	Let $\Sat$ be a point set of $\pg{\n}{q}$.
	\begin{enumerate}
		\item A point $P\in\pg{\n}{q}$ is said to be $\rho$-\emph{saturated} by $\Sat$ (or, conversely, the set $\Sat$ $\rho$-\emph{saturates} $P$) if there exists a subspace through $P$ of dimension at most $\rho$ that is spanned by points of $\Sat$.
		\item The set $\Sat$ is a $\rho$-\emph{saturating set} of $\pg{\n}{q}$ if $\rho$ is the least integer such that all points of $\pg{\n}{q}$ are $\rho$-saturated by $\Sat$.
		Let $\satbound(\n,\rho)$ denote the smallest possible size of a $\rho$-saturating set of $\pg{\n}{q}$.
	\end{enumerate}
\end{df}

The authors of \cite[Theorem $3.2$]{DavydovEtAl} proved that strong $k$-blocking sets of an embedded $\pg{\n}{q}$ are $(\n-k)$-saturating sets of the ambient geometry $\pg{\n}{q^{\n-k+1}}$.
The author of \cite{Denaux} described this method of constructing $(\n-k)$-saturating sets as the \emph{strong blocking set approach}.

There has been done a lot of research concerning $\rho$-saturating sets in $\pg{\n}{q}$.
The following bundles relevant known results concerning $\satboundpow{4}(4,3)$, $\satboundpow{3}(4,2)$, $\satboundpow{4}(5,3)$ and $\satboundpow{3}(5,2)$.

\begin{thm}[\cite{DavydovEtAl,DavydovOstergard,Denaux}]
    In the following, $e\approx2.718...$ depicts Euler's number.
    \begin{enumerate}
        \item $\frac{4}{e}q+\frac{3}{2}<\satboundpow{4}(4,3)\leq\begin{cases}6q+6\quad&\textnormal{if }q>36086\textnormal{ and }2,3\nmid q\textnormal{,}\\
        7q+7\quad&\textnormal{if }q\geq7\textnormal{,}\\
        8q-3\textnormal{.}&\end{cases}$
        \item $\frac{3}{e}q^2+1<\satboundpow{3}(4,2)\leq6q^2+3q-6$.
        \item $\frac{4}{e}q^2+\frac{3}{2}<\satboundpow{4}(5,3)\leq4q^2+4q+4$.
        \item $\frac{3}{e}q^3+1<\satboundpow{3}(5,2)\leq3q^3+1$.
    \end{enumerate}
\end{thm}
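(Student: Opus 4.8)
The statement bundles four pairs of bounds of one uniform type: in each case an upper bound coming from an explicit construction and a lower bound coming from an incidence count. My plan is to dispatch the two halves separately and treat all four items at once.

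\textbf{Upper bounds.} Every quantity occurring is of the form $\satboundpow{\n-k+1}(\n,\n-k)$ for a pair $(\n,k)$ equal to $(4,1)$, $(4,2)$, $(5,2)$ or $(5,3)$. The first step is the \emph{strong blocking set approach} of \cite[Theorem~3.2]{DavydovEtAl} (see also \cite{Denaux}): a strong $k$-blocking set of an embedded $\pg{\n}{q}$ with $b$ points is an $(\n-k)$-saturating set of $\pg{\n}{q^{\n-k+1}}$, whence $\satboundpow{\n-k+1}(\n,\n-k)\le b$. It then remains to feed in the smallest strong $k$-blocking sets currently on record. For item~(1) these are the point sets covered by $6$ lines (Theorem~\ref{Res_SporadicExamples}(2)), by $2\n-1=7$ lines (Theorem~\ref{Res_HigPigExistence}) and by the $8$-line configuration of \cite[Construction~2]{AlfaranoBorelloNeriRavagnani}, of sizes $6q+6$, $7q+7$ and $8q-3$. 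For items~(2)--(4) one uses the strong $2$-blocking set of $\pg{4}{q}$ of size $6q^2+3q-6$, the strong $2$-blocking set of $\pg{5}{q}$ of size $4q^2+4q+4$, and the strong $3$-blocking set of $\pg{5}{q}$ of size $3q^3+1$, all supplied by \cite{DavydovEtAl,Denaux}; wherever a direct covering-code construction of \cite{DavydovOstergard} beats the strong blocking set route, one simply takes that instead.

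\textbf{Lower bounds.} Every lower bound has the shape $\tfrac{\rho+1}{e}\,q^{\,\n-\rho}+c$, and I would recover all four in one blow. Put $\rho:=\n-k$ and $Q:=q^{\rho+1}$, so the ambient space is $\pg{\n}{Q}$. If $\Sat\subseteq\pg{\n}{Q}$ is $\rho$-saturating, then every point of $\pg{\n}{Q}$ lies in one of the at most $\binom{|\Sat|}{\rho+1}$ subspaces spanned by $(\rho+1)$-element subsets of $\Sat$, each of which has dimension at most $\rho$ and hence at most $\tfrac{Q^{\rho+1}-1}{Q-1}$ points; therefore $\binom{|\Sat|}{\rho+1}\ge\tfrac{Q^{\n+1}-1}{Q^{\rho+1}-1}\ge Q^{\,\n-\rho}$. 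By the arithmetic--geometric mean inequality $\binom{|\Sat|}{\rho+1}\le\bigl(|\Sat|-\tfrac{\rho}{2}\bigr)^{\rho+1}/(\rho+1)!$, and combining this with $(\rho+1)!>\bigl(\tfrac{\rho+1}{e}\bigr)^{\rho+1}$ yields $|\Sat|>\tfrac{\rho+1}{e}\,Q^{(\n-\rho)/(\rho+1)}+\tfrac{\rho}{2}=\tfrac{\rho+1}{e}\,q^{\,\n-\rho}+\tfrac{\rho}{2}$. Specialising $(\n,\rho,Q)$ to $(4,3,q^{4})$, $(4,2,q^{3})$, $(5,3,q^{4})$ and $(5,2,q^{3})$ reproduces exactly the four displayed bounds, with $c=\tfrac32$ when $\rho=3$ and $c=1$ when $\rho=2$. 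This half is self-contained modulo the incidence-counting lemma of \cite{DavydovEtAl}.

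\textbf{The main obstacle.} The genuine difficulty is not in this theorem, which amounts to bookkeeping, but in the inputs it quotes: the existence of $2\n-1$ pairwise disjoint higgledy-piggledy lines, of six such lines for $q>36086$ not a power of $2$ or $3$, of the eight-line configuration covering only $8q-3$ points, and of the small strong blocking sets of planes and solids in $\pg{4}{q}$ and $\pg{5}{q}$ --- each resting on the substantial geometric arguments of Sections~\ref{Sect_HigPig4}--\ref{Sect_HigPig5} and of \cite{AlfaranoBorelloNeriRavagnani,DavydovEtAl,DavydovOstergard,Denaux}. A comparatively minor subtlety is extracting the exact additive constants $+1$ and $+\tfrac32$ in the lower bound, which is precisely why the AM--GM estimate above is preferable to the cruder bound $\binom{s}{t}<s^{t}/t!$.
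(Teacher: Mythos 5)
Your proposal is correct in substance and follows essentially the same architecture as the paper's proof, which is itself just a bundling of citations: the upper bounds are imported from known constructions, and the lower bound is the standard union-of-spans count (the paper cites it from Denaux's Proposition~4.2.1; you reprove it, and your derivation via $\binom{|\Sat|}{\rho+1}\geq Q^{\n-\rho}$, the AM--GM refinement and $(\rho+1)!>\left(\tfrac{\rho+1}{e}\right)^{\rho+1}$ is exactly the right computation and does yield the additive constants $\tfrac{\rho}{2}$ as stated). One attribution deserves correction, though. You route all four upper bounds through the strong-blocking-set approach, and while that is accurate for item~($1$) (the three bounds there are precisely the point counts of the $6$-, $7$- and $8$-line configurations), it cannot be the source of items~($3$) and~($4$): a strong $2$-blocking set of $\pg{5}{q}$ of size $4q^2+4q+4$ would have to do better than any union of planes (at least seven planes are needed by Theorem~\ref{Thm_LowerBound}, and the best known union-of-planes construction is the $8(q^2+q+1)$ points of Theorem~\ref{Thm_EightHigPigPlanes5}), and $3q^3+1$ is smaller than even three disjoint solids of $\pg{5}{q}$. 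Those two bounds are direct constructions of saturating sets/covering codes carried out in the large-field geometry ($\pg{5}{q^4}$ and $\pg{5}{q^3}$ respectively, via \cite{DavydovEtAl} and \cite{DavydovOstergard}), not images of strong blocking sets over $\FF_q$. Your closing hedge ("take the direct covering-code construction instead wherever it beats the strong blocking set route") keeps the logic sound, but as written the primary mechanism you assign to those two items is not available.
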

\begin{proof}
    The lower bound arises from \cite[Proposition $4.2.1$]{Denaux}.
    The upper bound on $\satboundpow{4}(4,3)$ is the same as the one on $m(5,q)$ (see Theorem \ref{Res_FruitMinimal}).
    The upper bound on $\satboundpow{3}(4,2)$ follows from \cite[Theorem $7.2.9$]{Denaux} if $q\neq2$ and \cite[Theorem $3.16$]{DavydovEtAl} if $q=2$.
    The upper bound on $\satboundpow{4}(5,3)$ arises from \cite[Corollary $7.2$]{DavydovEtAl}, and, finally, the upper bound on $\satboundpow{4}(5,2)$ was obtained in \cite[Theorem $7$]{DavydovOstergard}.
\end{proof}

The computations of \cite{AlfaranoBorelloNeriRavagnani} furthermore imply that $s_{16}(4,3)\leq13$ and $s_{81}(4,3)\leq20$.

\bigskip
If we translate our results (Theorem \ref{Thm_SixHigPigLines4}, Corollary \ref{Crl_SixHigPigPlanes4}, Theorem \ref{Thm_EightHigPigPlanes5} and Corollary \ref{Crl_SevenHigPigSolids5}) to the context of saturating sets using the strong blocking set approach, we respectively obtain the following.
    \begin{enumerate}
        \item $\satboundpow{4}(4,3)\leq6q+5$.
        \item $\satboundpow{3}(4,2)\leq\begin{cases}6q^2+5q+1&\textnormal{if }q\leq5\textnormal{,}\\6q^2+5q-9&\textnormal{if }q\geq7\textnormal{.}\end{cases}$
        \item $\satboundpow{4}(5,3)\leq8q^2+8q+8$.
        \item $\satboundpow{3}(5,2)\leq7q^3+7q^2-14q-14\quad$if $q\geq7$.
    \end{enumerate}
Note that for bound $2.$ and $4.$ we made use of Proposition \ref{Prop_OptimalHigPigN-2} for $q\geq7$.
If $q\leq5$, we considered the worst-case scenarios of point coverage for six planes of $\pg{4}{q}$, two of which intersect in a line (the other four planes each contribute at most $q^2+q$ points).

Unfortunately, only one of these results improves the ones from the literature (if $q\geq5$), which we repeat below.
\begin{thm}\label{Thm_FruitSaturating}
    $\satboundpow{4}(4,3)\leq6q+5$.
\end{thm}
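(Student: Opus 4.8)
The plan is to combine the geometric construction of Theorem \ref{Thm_SixHigPigLines4} with the \emph{strong blocking set approach} recalled above, i.e.\ the result of \cite[Theorem $3.2$]{DavydovEtAl} stating that every strong $k$-blocking set of an embedded $\pg{\n}{q}$ is an $(\n-k)$-saturating set of the ambient space $\pg{\n}{q^{\n-k+1}}$. Specialising to $\n=4$ and $k=1$, a strong blocking set of $\pg{4}{q}$ is a $3$-saturating set of $\pg{4}{q^4}$; hence it suffices to exhibit a strong blocking set of $\pg{4}{q}$ of size at most $6q+5$.

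First I would invoke Theorem \ref{Thm_SixHigPigLines4} to obtain six lines $\ell_1,\dots,\ell_6$ of $\pg{4}{q}$ in higgledy-piggledy arrangement, two of which intersect in some point $S$. By the very definition of a higgledy-piggledy set of $1$-subspaces, the union $\Strong:=\ell_1\cup\dots\cup\ell_6$ is a strong blocking set of $\pg{4}{q}$. Second, I would bound its size: each of the six lines carries $q+1$ points, so $|\Strong|\leq6(q+1)$, and since $S$ lies on at least two of these lines the count drops by at least one, giving $|\Strong|\leq6q+5$ (any further concurrences among the $\ell_i$ only make the union smaller). Third, applying the strong blocking set approach to $\Strong$ produces a $3$-saturating set of $\pg{4}{q^4}$ of size at most $6q+5$, whence $\satboundpow{4}(4,3)\leq6q+5$.

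At this point there is essentially no obstacle left: the whole difficulty has been absorbed into Theorem \ref{Thm_SixHigPigLines4}, whose proof both constructs the six lines and verifies the higgledy-piggledy property through Theorem \ref{Res_CharHigPig}. The only points requiring a little care are the bookkeeping in the size estimate --- exploiting the intersecting pair of lines to beat the trivial bound $6(q+1)$ by one --- and quoting the strong blocking set approach with the correct parameters, so that the exponent $q^4$ appears (matching the subscript of $\satboundpow{4}(4,3)$) rather than a smaller power of $q$.
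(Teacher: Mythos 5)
Your proposal is correct and follows essentially the same route as the paper: the bound is obtained by taking the six higgledy-piggledy lines of Theorem \ref{Thm_SixHigPigLines4}, observing that their union is a strong blocking set of at most $6(q+1)-1=6q+5$ points thanks to the intersecting pair, and applying the strong blocking set approach of \cite[Theorem $3.2$]{DavydovEtAl} with $\n=4$ and $k=1$ to land in $\pg{4}{q^4}$. The bookkeeping and the choice of parameters are exactly as the paper intends.
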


We can now translate this result to the coding-theoretical context.

\bigskip
The \emph{Hamming distance} between two vectors of $\FF_q^\len$ equals the number of positions in which they differ.
A $q$-ary linear code of length $\len$ and codimension (redundancy) $\red$ is said to have \emph{covering radius} $R$ if $R$ is the least integer such that every vector of $\FF_q^\len$ lies within Hamming distance $R$ of a codeword.
Whenever linear codes are investigated with the goal of optimising the length or (co)dimension with respect to the covering radius, such codes are often called $[n,n-r]_qR$ \emph{covering} codes.
This type of $q$-ary linear codes have a wide range of applications; for a description of several examples of such applications, see \cite[Section $1$]{DavydovEtAl}.

Suppose that $\Sat$ is a point set of $\pg{\red-1}{q}$ of size $\len$ and let $H$ be a $q$-ary $(\red\times\len)$-matrix with the homogeneous coordinates of the points of $\Sat$ as columns.
Then $\Sat$ is an $(R-1)$-saturating set of $\pg{\red-1}{q}$ if and only if $H$ is a parity check matrix of an $[\len,\len-\red]_qR$ code.
This describes a one-to-one correspondence between saturating sets of projective spaces and linear covering codes.
More specifically, any $\rho$-saturating set $\Sat$ of $\pg{\n}{q}$ corresponds to an $[\len,\len-\red]_qR$ code with
\[
    \len=|\Sat|\textnormal{,}\qquad\red=\n+1\quad\textnormal{and}\quad R=\rho+1\textnormal{.}
\]

Due to this correspondence, finding small $\rho$-saturating sets in $\pg{\n}{q}$ is equivalent to finding $[\len,\len-\red]_qR$ codes of small length.
In light of this, the \emph{length function} $\lenfunc(\red,R)$ is the smallest length of a $q$-ary linear code with covering radius $R$ and codimension $\red$.
Note that
\[
    \lenfunc(\red,R)=\satbound(\red-1,R-1)\textnormal{.}
\]

\begin{thm}
    $\lenfuncpow{4}(5,4)\leq6q+5$.
\end{thm}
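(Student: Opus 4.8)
The plan is to reduce the claim to Theorem \ref{Thm_FruitSaturating} via the dictionary between covering codes and saturating sets that was just set up. Recall that this dictionary, applied over the field $\FF_{q^4}$, gives the identity $\lenfuncpow{4}(\red,R)=\satboundpow{4}(\red-1,R-1)$, since a $\rho$-saturating set of $\pg{\n}{q^4}$ of size $\len$ is exactly a parity check matrix of an $[\len,\len-(\n+1)]_{q^4}(\rho+1)$ covering code. Thus the first (and essentially only) step is to instantiate this with $\red=5$ and $R=4$, yielding
\[
    \lenfuncpow{4}(5,4)=\satboundpow{4}(4,3)\textnormal{.}
\]

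The second step is simply to quote Theorem \ref{Thm_FruitSaturating}, which states $\satboundpow{4}(4,3)\leq6q+5$, giving the desired bound $\lenfuncpow{4}(5,4)\leq6q+5$.

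There is no genuine obstacle here: the content of the result lives entirely in Theorem \ref{Thm_FruitSaturating} (and, behind it, in the existence of six lines of $\pg{4}{q}$ in higgledy-piggledy arrangement, Theorem \ref{Thm_SixHigPigLines4}, together with the strong blocking set approach to saturating sets). The only point to be careful about is bookkeeping of the field order: the underlying strong blocking set lives in $\pg{4}{q}$, but the saturating set — and hence the covering code — lives over $\FF_{q^{4}}$ because the strong blocking set approach raises the field to $\FF_{q^{\n-k+1}}=\FF_{q^{4}}$ when $\n=4$ and $k=1$. Once the subscripts are matched correctly, the statement follows immediately.
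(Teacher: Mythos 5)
Your proposal is correct and is exactly the argument the paper intends: the theorem is an immediate consequence of the identity $\lenfunc(\red,R)=\satbound(\red-1,R-1)$ (instantiated over $\FF_{q^4}$ with $\red=5$, $R=4$) together with Theorem \ref{Thm_FruitSaturating}, which is why the paper states it without a separate proof. Your remark about tracking the field order $q^4$ coming from the strong blocking set approach is the right bookkeeping point and matches the paper's conventions.
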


\begin{rmk}
    The authors of \cite{DavydovEtAl} describe a strong tool called `$q^m$-concatenating constructions' to construct infinite families of covering codes with fixed covering radius $R$.
    One could consider to use this tool on the construction behind Theorem \ref{Thm_SixHigPigLines4}, Theorem \ref{Res_SporadicExamples} or even Theorem \ref{Res_HigPigExistence} to obtain families of short covering codes and study their \emph{(asymptotic) covering densities} (see \cite{DavydovEtAl}).
\end{rmk}

\subsubsection*{Small resolving sets of the point-hyperplane incidence graph}

Finally, some results can be deduced on the size of smallest resolving sets of the point-hyperplane incidence graph of $\pg{\n}{q}$.

\begin{df}
    Consider a finite, connected simple graph $\Gamma=(V,E)$ and let $d:V\times V\rightarrow\NN$ be its metric.
    A vertex $v\in V$ is called \emph{resolved} by a vertex set $S=\set{v_1,v_2,\dots,v_n}$ if the ordered sequence $(d(v,v_1),d(v,v_2),\dots,d(v,v_n))$ is unique.
    The set $S$ is called a \emph{resolving set} of $\Gamma$ if it resolves all its vertices.
\end{df}

Let $\Gamma_{\mathcal{P},\mathcal{H}}(\n,q)$ be the point-hyperplane incidence graph of $\pg{\n}{q}$, i.e.\ the bipartite graph that associates every point and every hyperplane with a vertex, vertices of different parts are adjacent if and only if the corresponding point lies in the corresponding hyperplane.
The authors of {\cite[Theorem $4$]{BartoliKissMarcuginiPambianco}} proved that if $q$ is large enough, any resolving set of $\Gamma_{\mathcal{P},\mathcal{H}}(\n,q)$ has size at least
\[
    2\n q-2\frac{\n^{\n-1}}{(\n-1)!}\textnormal{.}
\]

Another result obtained in \cite{BartoliKissMarcuginiPambianco} can be somewhat generalised, using the same arguments, as follows.

\begin{thm}[{\cite[Lemma $8$]{BartoliKissMarcuginiPambianco}}]\label{Res_ResolvingSet}
    Suppose that $\mathcal{L}=\set{\ell_1,\ell_2,\dots,\ell_k}$ is a higgledy-piggledy line set of $\pg{\n}{q}$ and $P_i\in\ell_i$ is a point not lying in any $\ell_j$, $j\neq i$; define
    \[
        m:=|\sett{P\in\ell_i\setminus\set{P_i}}{i\in\set{1,2,\dots,k}}|\textnormal{.}
    \]
    Then $\Gamma_{\mathcal{P},\mathcal{H}}(\n,q)$ has a resolving set of size $2m$.
\end{thm}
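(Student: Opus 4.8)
The plan is to use the fact that the point--hyperplane incidence graph $\Gamma:=\Gamma_{\mathcal{P},\mathcal{H}}(\n,q)$ has tiny diameter, so that the distance vector of a vertex records little more than incidences. Assume $\n\geq2$. A routine check gives, for distinct points $P,P'$, distance $d(P,P')=2$ (any two points lie in a common hyperplane); for distinct hyperplanes $H,H'$, distance $d(H,H')=2$; and $d(P,H)=1$ if $P\in H$, $d(P,H)=3$ otherwise. Hence, writing a candidate vertex set as a disjoint union $S=\mathcal{P}_0\cup\mathcal{H}_0$ of a set $\mathcal{P}_0$ of point--vertices and a set $\mathcal{H}_0$ of hyperplane--vertices, one sees that $S$ resolves $\Gamma$ as soon as: (i) $\mathcal{P}_0$ \emph{separates} every pair of distinct hyperplanes, meaning some point of $\mathcal{P}_0$ lies in exactly one of them; (ii) $\mathcal{H}_0$ separates every pair of distinct points in the analogous sense; and (iii) $\mathcal{P}_0,\mathcal{H}_0\neq\emptyset$ (this last is what tells point--vertices from hyperplane--vertices, since on any $\mathcal{P}_0$-coordinate the former have even distance and the latter odd distance, and it will be automatic). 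So the statement reduces to producing such $\mathcal{P}_0$ and $\mathcal{H}_0$, each of size $m$.

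For the point part I would take $\mathcal{P}_0:=\bigcup_{i=1}^{k}\bigl(\ell_i\setminus\{P_i\}\bigr)$, which has exactly $m$ points by the very definition of $m$. To check that $\mathcal{P}_0$ separates two distinct hyperplanes $H,H'$: since $\mathcal{L}$ is a higgledy-piggledy line set, $\bigcup_i\ell_i$ is a strong blocking set, so $\bigl(\bigcup_i\ell_i\bigr)\cap H\neq\bigl(\bigcup_i\ell_i\bigr)\cap H'$ (a point set spanning both $H$ and $H'$ would force $H=H'$); pick a point $R\in\bigcup_i\ell_i$ lying in exactly one of $H,H'$, say $R\in\ell_i$. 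If $R\neq P_i$ we are done. If $R=P_i$, then using that (for $\n\geq2$) the line $\ell_i$ meets each hyperplane, so $\ell_i\cap H$ and $\ell_i\cap H'$ are each either $\ell_i$ or a single point, together with $q\geq2$, one always locates a further point of $\ell_i\setminus\{P_i\}$ lying in exactly one of $H,H'$. I expect this replacement of the excluded point $P_i$ by an admissible one to be the only genuinely delicate point of the argument; everything else is bookkeeping.

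For the hyperplane part I would fix a non-degenerate polarity $\perp$ of $\pg{\n}{q}$ and set $\mathcal{H}_0:=\{R^{\perp}:R\in\mathcal{P}_0\}$, which again has $m$ elements and is disjoint, as a set of vertices of $\Gamma$, from $\mathcal{P}_0$. Given distinct points $P,P'$, the hyperplanes $P^{\perp}$ and $(P')^{\perp}$ are distinct, so by the previous paragraph some $R\in\mathcal{P}_0$ lies in exactly one of them; since $R\in X^{\perp}\iff X\in R^{\perp}$ for a polarity, the hyperplane $R^{\perp}\in\mathcal{H}_0$ contains exactly one of $P,P'$. Thus (ii) holds, (i) was established above, both $\mathcal{P}_0$ and $\mathcal{H}_0$ are non-empty, and hence $S=\mathcal{P}_0\cup\mathcal{H}_0$ is a resolving set of $\Gamma_{\mathcal{P},\mathcal{H}}(\n,q)$ of size $2m$, as claimed. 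In short, the real content is the separation property of $\mathcal{P}_0$ (in particular the $R=P_i$ case); the reduction in the first paragraph and the polarity step are standard.
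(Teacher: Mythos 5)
Your proof is correct, and since the paper itself gives no proof (it cites \cite[Lemma $8$]{BartoliKissMarcuginiPambianco} and notes the argument carries over), your write-up in fact supplies the standard argument: puncture the lines to get $\mathcal{P}_0$, dualise via a polarity to get $\mathcal{H}_0$, and use the diameter-$3$ distance pattern of the incidence graph. The one delicate step, replacing $R=P_i$ by another point of $\ell_i\setminus\set{P_i}$ lying in exactly one of $H,H'$, does go through: either $\ell_i\subseteq H$, in which case the $q-1\geq1$ points of $\ell_i$ other than $P_i$ and $\ell_i\cap H'$ work, or $\ell_i\cap H=\set{P_i}$, in which case the point $\ell_i\cap H'$ works.
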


Hence, using the known results concerning existence of small higgledy-piggledy line sets, the same authors pointed out that $\Gamma_{\mathcal{P},\mathcal{H}}(3,q)$ has a resolving set of size $8q$ \cite[Theorem $10$]{BartoliKissMarcuginiPambianco}.
As a corollary of their main result (Theorem \ref{Res_SporadicExamples}($2.$)), they also proved that $\Gamma_{\mathcal{P},\mathcal{H}}(4,q)$ has a resolving set of size $12q$ if $q>36086$ is no power of $2$ or $3$ \cite[Corollary $13$]{BartoliKissMarcuginiPambianco}.
We can slightly extend and improve this result, as well as translate the existing result concerning higgledy-piggledy line sets of $\pg{5}{q}$ to this graph-theoretical context.

\begin{thm}
    The graph $\Gamma_{\mathcal{P},\mathcal{H}}(4,q)$ has a resolving set of size $12q-2$.
    The graph $\Gamma_{\mathcal{P},\mathcal{H}}(5,q)$ has a resolving set of size $14q$.
\end{thm}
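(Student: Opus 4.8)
The plan is to feed the explicit higgledy-piggledy line sets already constructed in the excerpt into Theorem~\ref{Res_ResolvingSet}. For $\Gamma_{\mathcal{P},\mathcal{H}}(4,q)$ I would use the six lines of $\pg{4}{q}$ from Theorem~\ref{Thm_SixHigPigLines4}, and for $\Gamma_{\mathcal{P},\mathcal{H}}(5,q)$ the seven pairwise disjoint lines of $\pg{5}{q}$ from Theorem~\ref{Res_SporadicExamples}$(3.)$. In both cases the only task is to choose, on each line $\ell_i$, a point $P_i$ lying on no other line of the set, and then to evaluate $m:=\big|\bigcup_i(\ell_i\setminus\set{P_i})\big|$; Theorem~\ref{Res_ResolvingSet} then immediately yields a resolving set of size $2m$.

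First I would handle $\pg{5}{q}$. Let $\ell_1,\dots,\ell_7$ be the seven pairwise disjoint lines of Theorem~\ref{Res_SporadicExamples}$(3.)$ and pick an arbitrary point $P_i\in\ell_i$ for each $i$. Pairwise disjointness makes every $P_i$ lie on $\ell_i$ and on no other line, so the hypotheses of Theorem~\ref{Res_ResolvingSet} hold, and it also makes the sets $\ell_i\setminus\set{P_i}$ pairwise disjoint, whence $m=\sum_{i=1}^{7}\big(|\ell_i|-1\big)=7q$. Hence $\Gamma_{\mathcal{P},\mathcal{H}}(5,q)$ has a resolving set of size $2m=14q$.

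Next I would handle $\pg{4}{q}$. Let $\ell_1,\dots,\ell_6$ be the six lines of Theorem~\ref{Thm_SixHigPigLines4}, labelled so that $\ell_5$ and $\ell_6$ are the two lines that intersect, meeting in a single point $R$, while all remaining pairs are disjoint (this is exactly the structure of the configuration built in the proof of that theorem). Pick $P_i\in\ell_i$ arbitrarily for $i\in\set{1,2,3,4}$, and pick $P_5\in\ell_5\setminus\set{R}$, $P_6\in\ell_6\setminus\set{R}$, which is possible since $q\geq2$. By the disjointness pattern each $P_i$ then lies on $\ell_i$ and on no other $\ell_j$, so Theorem~\ref{Res_ResolvingSet} applies. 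For the count, $\big|\bigcup_{i=1}^{6}\ell_i\big|=6(q+1)-1=6q+5$, since the only coincidence among the six lines is the common point $R$ of $\ell_5$ and $\ell_6$; moreover the six points $P_1,\dots,P_6$ are pairwise distinct (if $P_i=P_j$ with $i\neq j$ then $P_i\in\ell_j$, against the choice), so deleting them gives $m=6q+5-6=6q-1$. Therefore $\Gamma_{\mathcal{P},\mathcal{H}}(4,q)$ has a resolving set of size $2m=12q-2$.

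I do not expect a genuine obstacle here; the two places that need a little care are the bookkeeping of the shared point $R$ in the $\pg{4}{q}$ count --- the two lines $\ell_5,\ell_6$ together contribute $2q-1$ rather than $2q$ points to the union after $P_5,P_6$ are deleted --- and the (immediate) existence of admissible points $P_i$. If one wished to invoke only the bare statement that \emph{two} of the six lines intersect, without the extra information that every remaining pair is disjoint, the same counting still gives $\big|\bigcup_i\ell_i\big|\leq 6q+5$ and hence $m\leq 6q-1$; since a resolving set stays resolving when further vertices are adjoined to it, a resolving set of size exactly $12q-2$ exists in any case.
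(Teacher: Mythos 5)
Your proposal is correct and is exactly the paper's argument: the paper's proof consists of the single line ``Directly from Theorem \ref{Res_SporadicExamples}($3.$) and \ref{Thm_SixHigPigLines4}, combined with Theorem \ref{Res_ResolvingSet}'', and you have simply made explicit the counting of $m$ ($7q$ for the seven disjoint lines, $6q-1$ for the six lines with one shared point) that the paper leaves implicit. Your closing remark handling the case where the six lines might have extra coincidences is a sensible extra safeguard but not something the paper bothers with.
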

\begin{proof}
    Directly from Theorem \ref{Res_SporadicExamples}($3.$) and \ref{Thm_SixHigPigLines4}, combined with Theorem \ref{Res_ResolvingSet}.
\end{proof}

\textbf{Acknowledgements.}
The author would like to credit Maarten De Boeck and the author's supervisor Leo Storme for providing a sketch of the proof of the respective necessary and sufficient conditions of Theorem \ref{Thm_SpecialSublines}.
Furthermore, a lot of gratitude goes towards the many proofreaders of this work.

\bibliographystyle{plain}
\bibliography{main.bib}

\appendix

\section{Relevant GAP-code for small values of \texorpdfstring{$\boldsymbol{q}$}{q}}

In this appendix, we list some relevant snippets of code that prove the existence of certain small higgledy-piggledy sets in $\pg{4}{q}$ and $\pg{5}{q}$, $q$ small.
The main tactic to tackle this problem is by randomly choosing subspaces and checking whether the selected set meets the desired property.
We use the package `FinInG' \cite{fining} of the GAP system \cite{GAP4}, hence one needs to call \texttt{LoadPackage("FinInG");} before executing any of the code below.

The following snippet checks whether for $q\in\set{2,3,4,5}$ there exist six higgledy-piggledy planes in $\pg{4}{q}$, two of which intersect in a line.

\begin{snip}\label{Snip_Planes4}\hfill

    \texttt{gap> for q in [2..5] do}
    
    \texttt{>\qquad\qquad pg := PG(4,q);}
    
    \texttt{>\qquad\qquad repeat}
    
    \texttt{>\qquad\qquad\qquad planeSet := [];}
    
    \texttt{>\qquad\qquad\qquad line := Random(Lines(pg));}
    
    \texttt{>\qquad\qquad\qquad for i in [1,2] do}
    
    \texttt{>\qquad\qquad\qquad\qquad AddSet(planeSet,Random(Planes(line)));}
    
    \texttt{>\qquad\qquad\qquad od;}
    
    \texttt{>\qquad\qquad\qquad for i in [1..4] do}
    
    \texttt{>\qquad\qquad\qquad\qquad AddSet(planeSet,RandomSubspace(pg,2));}
    
    \texttt{>\qquad\qquad\qquad od;}
    
    \texttt{>\qquad\qquad until ForAll(Planes(pg),pl -> Dimension(Span(List(planeSet,pl2}
    
    \texttt{\qquad\qquad\;\;-> Meet(pl,pl2)))) = 2);}
    
    \texttt{>\qquad\qquad Print("Test for q = ",q,": succes!\textbackslash n");}
    
    \texttt{>\qquad od;}
\end{snip}

The next the snippet checks whether there exist seven planes of $\pg{5}{q}$ in higgledy-piggledy arrangement as part of a Desarguesian spread, $q\in\set{2,3,4,5,7}$.

\begin{snip}\label{Snip_Planes5}\hfill

    \texttt{gap> for q in [2,3,4,5,7] do}
    
    \texttt{>\qquad\qquad pgLine := PG(1,q\^{}3);}
    
    \texttt{>\qquad\qquad pgBig := PG(5,q);}
    
    \texttt{>\qquad\qquad proj := NaturalEmbeddingByFieldReduction(pgLine,pgBig);}
    
    \texttt{>\qquad\qquad repeat}
    
    \texttt{>\qquad\qquad\qquad	pointSet := [];}
    
    \texttt{>\qquad\qquad\qquad	while Size(pointSet) < 7 do}
    
    \texttt{>\qquad\qquad\qquad\qquad AddSet(pointSet,Random(Points(pgLine)));}
    
    \texttt{>\qquad\qquad\qquad	od;}
    
    \texttt{>\qquad\qquad\qquad	planeSet := Set(pointSet,p -> p\^{}proj);}
    
    \texttt{>\qquad\qquad until ForAll(Solids(pgBig),sol -> Dimension(Span(List(planeSet,}
    
    \texttt{\qquad\qquad\;\;pl -> Meet(sol,pl)))) = 3);}
    
    \texttt{>\qquad\qquad Print("Test for q = ",q,": succes!\textbackslash n");}
    
    \texttt{>\qquad od;}
\end{snip}

\bigskip
Author's address:

\bigskip
Lins Denaux

Ghent University

Department of Mathematics: Analysis, Logic and Discrete Mathematics

Krijgslaan $281$ -- Building S$8$

$9000$ Ghent

BELGIUM

\texttt{e-mail : lins.denaux@ugent.be}

\texttt{website: }\url{https://users.ugent.be/~ldnaux}

\end{document}